\newtheorem{theorem}{Theorem}[section]
\newtheorem{lemma}{Lemma}[section]
\newtheorem{corollary}{Corollary}[section]
\newtheorem{remark}{Remark}[section]
\numberwithin{equation}{section}
\newtheorem{proposition}{Proposition}[section]
\DeclareMathOperator*{\esssup}{ess\,sup}
\begin{document}

\title[Nonlinear convection-dominated flow in cylindric domains]
 {Reduced-dimensional modelling \\
 for nonlinear convection-dominated flow\\  in cylindric domains}

\author[T. Mel'nyk \& C. Rohde]{ Taras Mel'nyk$^{\flat}$ \ \& \ Christian Rohde$^\natural$}
\address{\hskip-12pt
$^\flat$ Institute of Applied Analysis and Numerical Simulation,
the University of Stuttgart\\
Pfaffenwaldring 57,\ 70569 Stuttgart,  \ Germany
 }
\email{Taras.Melnyk@mathematik.uni-stuttgart.de}

\address{\hskip-12pt  $^\natural$ Institute of Applied Analysis and Numerical Simulation,
the University of Stuttgart\\
Pfaffenwaldring 57,\ 70569 Stuttgart,  \ Germany
}
\email{christian.rohde@mathematik.uni-stuttgart.de }

\begin{abstract}
The aim of the paper is to construct and justify  asymptotic approximations for  solutions to  quasilinear    convection-diffusion problems with a predominance of nonlinear convective flow in a thin cylinder, where an inhomogeneous nonlinear Robin-type boundary condition involving convective and diffusive fluxes is imposed on the lateral surface. The limit problem for vanishing diffusion and the cylinder shrinking to an interval is a nonlinear first-order  conservation law. For a time span that allows for a classical solution of this limit problem corresponding uniform pointwise  and energy estimates are proven. They   provide precise model error estimates with respect to the  small parameter  that controls the double viscosity-geometric limit. In addition, other problems with more higher P\'eclet numbers are also considered.
\end{abstract}

\subjclass{Primary 76R50, 35K59; Secondary 35B40, 35B25 }

\keywords{Convection-diffusion quasilinear problem, asymptotic approximation,  thin cylinder, hyperbolic quasilinear equations of the first order}

\maketitle
\tableofcontents

\section{Introduction}\label{Sect1}

For modelling various complex processes associated with flow and transport in porous media (applications include various processes in thin
networks of cracks in various media,  neural networks, blood vessels in body tissues, root systems in the ground or piping systems),
 three-dimensional  numerical  calculations are often too computationally intensive. Therefore, various methods of model reduction are often used to obtain simplified models of lower dimensions; see e.g., \cite{Ber-Grap-2022,Koch-2022,Wohl-2020,graph-model-2009}  and references therein.

In many cases, these reductions result from significant simplifications and assumptions that do not reflect actual transport processes and are not rigorously justified. The purpose of this article is to give a reasoned answer on how to perform dimensionality reduction in convection-dominated  quasilinear transport problems in thin cylinders and graph-like networks.

\smallskip

We illustrate the relevant issues on a classical mathematical model of salt transport along a cylindric tube of radius $a$ through which water  flows according to Poiseuille's law (see \cite{Taylor_1953}). Namely, the velocity field of the fluid along the $x_1$-axis was  described by a vector-function
$$
\vec{V} = \big(v_0(1 - \tfrac{r^2}{a^2}), 0, 0 \big),
$$
where $r$ is the distance from the central  line of the tube to the cylindrical surface. The wall of the tube was considered impermeable, i.e., $\frac{\partial u}{\partial r} = 0$ at $r = a,$ where $u$ is the concentration of salt.
The concentration distribution has been the subject of analysis under certain assumptions. In particular, it has been demonstrated that the effective concentration obtained through section averaging is determined by the one-dimensional diffusion equation for the longitudinal variable, which is now referred to as \textit{Taylor dispersion}.

Shear flows $\big(v(r), 0, 0 \big)$ are obviously incompressible $(\nabla \boldsymbol{\cdot} \vec{V} = 0)$ and they arise naturally in various physical applications \cite{Maj_Kra_1999}. However, the appearance of roughness in the pipe or the penetration of transported particles through the wall can change the nature of the flow (it can become either non-incompressible or non-laminar). In such cases, the flow should be considered as a non-shear flow and generally as a function of the concentration of particles transported,  which results in the mathematical problem being defined by a typical quasi-linear convection-dominated transport problem such as \eqref{intr.1} below.

\smallskip

Let us be more precise about the mathematical setting  for this article. After non-dimensionalizing a convection-dominated transport problem in a thin cylinder like the described salt transport problem, one  obtains as mathematical model the re-scaled parabolic differential equation
\begin{equation}\label{intr.1}
  \partial_t u_\varepsilon -  \varepsilon\, \Delta_x u_\varepsilon + \nabla_x  \boldsymbol{\cdot} \big(u_\varepsilon\,  \overrightarrow{V_\varepsilon}(u_\varepsilon)\big)
   =  0,
\end{equation}
with a small parameter $\varepsilon$ scaling the second order  diffusion operator. This means that convective processes predominate over diffusive ones
and the P\'eclet number as the ratio of  convective to  diffusive transport rates  is of order $\mathcal{O}(\varepsilon^{-1}).$
Here the diffusion operator is the Laplacian $\Delta_x = \sum_{i=1}^{3}\partial^2_{x_i x_i},$  \, $\partial_{x_i}= \partial/\partial x_i,$ \, $x=(x_1,x_2,x_3),$ \, $\nabla_x = \text{grad},$   \,  $u_\varepsilon$ is the  unknown concentration   of some transported species in a thin cylinder~$\Omega_\varepsilon$ with a small cross-sectional radius  of order $\mathcal{O}(\varepsilon).$ The given convective vector field $\overrightarrow{V_\varepsilon}$  might be  not constant in the cross-section,  but can be assumed to  have  small transversal velocity components of order $\mathcal{O}(\varepsilon),$ and the longitudinal velocity component depends explicitly  on the unknown species concentration $u_\varepsilon.$

To  include boundary interactions with the surrounding bulk domain, we consider  the following nonlinear boundary condition
\begin{equation}\label{intr.2}
- \varepsilon   \partial_{\boldsymbol{\nu}_\varepsilon} u_\varepsilon  +  u_\varepsilon \, \overrightarrow{V_\varepsilon}(u_\varepsilon) \boldsymbol{\boldsymbol{\cdot}}\boldsymbol{\nu}_\varepsilon   =  \varepsilon\,  \varphi_\varepsilon\big(u_\varepsilon,x,t\big)
\end{equation}
on the lateral surface of the thin cylinder, in which both diffusion and convection flows are involved.

As $\varepsilon$ goes to zero, the thin cylinder $\Omega_\varepsilon$ shrinks to an interval, the diffusion operator is eliminated and, as we will show as our main result, the limit concentration $w$ must be a solution of the first-order hyperbolic quasilinear differential equation
\begin{equation}\label{in_3}
\partial_t w + \partial_{x_1}\!\big( v_1(w, x_1, t) \,  w\big) = -\widehat{\varphi}(w, x_1, t),
\end{equation}
where $-\widehat{\varphi}$ is the limit homogenized transformation of $\varphi_\varepsilon.$

But, it is common knowledge (see e.g. \cite{Lax,Lax_1972,Jorn_1974,Oleinik_1957}) that for solutions of such quasilinear equations, discontinuities usually appear after a finite time even for smooth coefficients and given smooth data of the problem.
For our original problem in a tube, this may indicate that turbulence occurs after this time.
Moreover, the solution $w$ cannot simultaneously satisfy the initial and boundary conditions on the ends  of the interval.
This poses a considerable challenge   for the study of such nonlinear problems.

Here, we restrict ourselves to a setting with classical solutions. We demonstrate how  to construct an approximation to the solution of the original problem for a given equation \eqref{intr.1} and the boundary condition~\eqref{intr.2}, using the solution of equation \eqref {in_3}, for a period of time when it still remains smooth.

In addition,  in the last section we explore the extension of these results to problems with different intensity coefficients and diffusion operators, including nonlinear ones, and with more higher P\'eclet numbers. Namely, we consider the  diffusion operator $\varepsilon^\beta \,\partial^2_{x_1 x_1} + \varepsilon \, \Delta_{x_2 x_3}$ and the longitudinal component of $\overrightarrow{V_\varepsilon}$ is of order
$\mathcal{O}(\varepsilon^{\frac{\beta -1}{2}}),$ where  $\beta >1,$ i.e.,  the  P\'eclet number is of  order $\mathcal{O}(\varepsilon^{-\frac{\beta +1}{2}})$ in  the longitudinal direction of the cylinder. As we will see, the limit concentration will now be the solution to the Cauchy problem
\begin{equation}\label{intr.4}
 \partial_t{w}   =  - \widehat{\varphi}({w}, x_1, t),     \qquad     w|_{t=0} =  0,
\end{equation}
where the variable $x_1$ is regarded as a parameter. This case agrees well with physical reality, as the low longitudinal flow magnitude does not cause turbulence in the cylinder throughout the entire time period.

The work can be understood as a continuation of our
papers on  linear convection-dominated non-stationary transport problems in thin graph-like networks (see \cite{Mel-Roh_AnalAppl-2024,Mel-Roh_AsAn-2024}), and on semi-linear ones when the vector field does not depend on the solution (see \cite{Mel-Roh_JMAA-2024}).

Various  transport problems in thin two-dimensional strips, when  the convection is dominating over diffusion, in the regime of Taylor dispersion have been considered in \cite{Pop-2016,Mikelich-2007} (see also references there).
The articles \cite{Macrone-2021,Pazhanin-2011,Pan-Pia-2010} studied stationary (elliptic) transport problems with shear convective flows in thin regions.


The paper has the following structure. We introduce  the precise formulation of our quasilinear problem in a thin cylinder and formulate the main assumptions in Section~\ref{Sec:Statement}.  Section~\ref{Sec:Existence} provides a justification for the solution's existence and explains some of its properties. Formal asymptotic analysis is performed in Section~\ref{Sec:4 Formal analysis}. Here we derive the limit mixed problem for  the first-order hyperbolic quasilinear differential equation \eqref{in_3} and justify the local existence of a smooth solution.
To satisfy the Dirichlet boundary condition at the right base of the thin cylinder, the boundary-layer ansatz  is introduced and the exponential decay of its coefficients is proved. The main our results are presented in Section~\ref{Sec:justification}, where the corresponding uniform pointwise and energy estimates are proven, which provide approximations of the solution with a given accuracy
regarding the small parameter $\varepsilon.$ In the concluding Section~\ref{Sect-Conclusion} we discuss the results and their potential application to similar problems in thin graph-like networks, and extension to other problems.

\section{Problem statement}\label{Sec:Statement}
Let $\varpi \subset \Bbb R^2 $ be a bounded simply connected domain with the smooth boundary that contains the origin.  A thin cylinder
 $\Omega_\varepsilon$  is defined as follows
$$
\Omega_\varepsilon =
  \left\{  x=(x_1, x_2, x_3)\in\Bbb{R}^3 \colon \ 0  <x_1 <\ell, \quad  \varepsilon^{-1} \overline{x}_1 \in \varpi
  \right\},
$$
where  $\varepsilon> 0$  is  a small  parameter and $\overline{x}_1 =  (x_2, x_3).$ Thus, the set $\varpi$ describes the $\varepsilon$-scaled cross section of   $\Omega_\varepsilon$  in any $x_1\in (0,\ell)$.
We  denote by
$
\Gamma_\varepsilon := \partial\Omega_\varepsilon \cap \{ x\colon \ 0<x_1<\ell \}
$
the cylinder's  lateral surface.

The components of the  given vector-valued function
\begin{equation}
   \label{str_1}
\overrightarrow{V_\varepsilon}=
\left(v_1(s, x_1,t), \ \varepsilon  v_2(x_1, \tfrac{\overline{x}_1}{\varepsilon},t), \
 \varepsilon  v_3(x_1,\tfrac{\overline{x}_1}{\varepsilon},t)
\right), \quad (s, x_1,t) \in  \Bbb R \times \overline{\Omega}_\varepsilon\times [0,T],
\end{equation}
are supposed to satisfy the following conditions:
\begin{description}
\item[{\bf A1}]
   $v_2$ and $v_3$  are smooth in $\overline{\Omega}_1\times [0,T]$ and have  compact supports with respect to the  longitudinal  variable $x_1,$  in particular, we will assume that they vanish in  $[0, \delta_1]$ and  $[\ell - \delta_1, \ell],$ where  $\delta_1$ is  sufficiently small positive number;
 \end{description}

\begin{description}
  \item[{\bf A2}]
 $v_1 \in C^\infty\big(\Bbb R\times [0, \ell]\times [0,T]\big),$   is independent of $x_1$   for $x_1\in [0, \delta_1]$ and $t\in [0, \delta_1],$
  $v_1 = \mathrm{v}_1(t) \ge \varsigma_0 = const>0$ for $(x_1,t)\in [\ell - \delta_1, \ell] \times [0,T],$  and
 \begin{equation}\label{v_1}
0\le v_1(s, x_1,t) \le C_0  \quad \text{and}\quad  |s \, \partial_s v_1(s, x_1,t)|+ |s \, \partial^2_{s s} v_1(s, x_1,t)| \le C_1
 \end{equation}
for all \ $ (s,x_1,t)\in \Bbb R \times [0,\ell]\times[0,T].$
\end{description}
So, the main direction of the vector field $\overrightarrow{V_\varepsilon}$ is along the axis of the cylinder~$\Omega_\varepsilon$ from left to right.


In $\Omega_\varepsilon^T := \Omega_\varepsilon \times (0, T),$ where $T > 0,$  we consider the following parabolic convection-diffusion  problem:
\begin{equation}\label{probl}
\left\{\begin{array}{rcll}
 \partial_t u_\varepsilon  -  \nabla_{{x}} \boldsymbol{\cdot} \Big( \varepsilon\, \nabla_{x} u_\varepsilon -
u_\varepsilon \overrightarrow{V_\varepsilon}\Big)
  & = & 0 &
    \text{in} \ \Omega^T_\varepsilon,
\\[2mm]
-   \partial_{\vec{\nu}_\varepsilon} u_\varepsilon +    u_\varepsilon \, \overline{V}_\varepsilon \cdot\vec{\nu}_\varepsilon &  = &  \varphi_\varepsilon\big(u_\varepsilon,x,t\big) &
   \text{on} \ \Gamma_\varepsilon^T,
\\[2mm]
u_\varepsilon\big|_{t=0}&=& 0, & \text{in} \ \Omega_{\varepsilon},
\\[2mm]
 u_\varepsilon\big|_{x_1= 0} = 0,
 &  & u_\varepsilon\big|_{x_1= \ell} = q_\ell(t),  & t \in  [0, T].
 \end{array}\right.
\end{equation}
In \eqref{probl},  we used the transversal velocity function
$$
\overline{V}_\varepsilon:= \big(v_2(x_1, \tfrac{\overline{x}_1}{\varepsilon},t), \, v_3(x_1,\tfrac{\overline{x}_1}{\varepsilon},t)\big),
$$
 the $\varepsilon$-scaled interaction function
$ \varphi_\varepsilon(s,x,t) := \varphi\big(s, x_1, {\overline{x}_1}/{\varepsilon}, t\big)$,
  and  denoted by $\vec{\nu}_\varepsilon \big({\overline{x}_1}/{\varepsilon}\big)$
the outward unit normal to $\Gamma_\varepsilon$. The set $\Gamma_\varepsilon^T$ is given by $\Gamma_\varepsilon^T:= \Gamma_\varepsilon \times (0, T)$.
For the interaction function $\varphi$ and the Dirichlet datum $q_\ell$ at $x_1=\ell $ in problem \eqref{probl}, we suppose
the following conditions.
\def\Cone{{\bf{A3}}}
\def\Ctwo{{\bf{A4}}}
\def\Cthree{{\bf{A5}}}
\begin{description}
  \item[{\Cone}]
The function $\varphi(s,x_1, \overline{x}_1,t)$ with the domain of definition
$$
\mathcal{X}:= \big\{s \in \Bbb R, \ \ x_1 \in  [0, \ell], \ \ \overline{x}_1 \in \overline{\varpi},  \  \  t\in [0,T]\big\}
$$
vanishes if $x_1 \in [0, \delta_1]$ and is independent of $s$ for $x_1\in [\ell - \delta_1, \ell]$; in addition,
it   belongs to the space $C^\infty(\mathcal{X})$  and
\begin{gather}\label{phi_cond}
|\varphi(s,x_1, \overline{x}_1,t)| +  |\nabla_{{x}_1}\varphi(s,x_1, \overline{x}_1,t)| +
 |\partial_t\varphi(s,x,t)|
\le C_2 |s| + C_3,
\\ \label{phi_cond+1}
|\partial_s\varphi(s,x_1, \overline{x}_1,t)|   \le C_4 \quad \text{in} \ \ \mathcal{X}.
\end{gather}
    \item[{\Ctwo}]
The  function $q_\ell$ belongs to $C^\infty([0, T])$ and is nonnegative.

\item[{\Cthree}] To satisfy the zero- and first-order matching conditions in \eqref{probl}, it is necessary to fulfill the relations
\begin{equation}\label{match_conditions}
  q_\ell(0) =   \frac{d q_\ell}{dt}(0)= 0, \quad
 \varphi\big|_{t=0} =0.
\end{equation}
\end{description}

In the following section we will justify the existence of   the solution to  problem \eqref{probl} for each fixed value of the parameter $\varepsilon$.  From a physical point of view, problem \eqref{probl} can be interpreted as a transport process of a substance with the unknown concentration $u_\varepsilon$ in the thin cylinder  $\Omega_\varepsilon$  through its lateral surface
 $\Gamma_\varepsilon$ and moving towards its right base $(x_1 = \ell).$

\medskip

Our goal is to construct the  asymptotic approximation for $u_\varepsilon$  as $\varepsilon \to 0,$ i.e., when the diffusion operator disappears and  the thin cylinder $\Omega_\varepsilon$ shrinks into the segment $ \mathcal{I} :=  \{x\colon  x_1 \in  (0, \ell), \ \overline{x}_1  = (0, 0)\}$.


\section{Existence of the solution}\label{Sec:Existence}

Using results on parabolic semilinear initial-boundary problems \cite{Lad_Sol_Ura_1968} and approaches developed there, we show the existence and uniqueness of  a solution to problem~\eqref{probl} for any fixed $\varepsilon > 0$ and some of its properties.
For our problem, the conditions of some theorems from \cite{Lad_Sol_Ura_1968} are not satisfied. Therefore, we will go on to prove the necessary analogues and indicate the necessary changes in the proofs. Where they coincide, we will refer to the original proofs.

By using the substitution $u_\varepsilon = \mathrm{w}_\varepsilon + \frac{x_1}{\ell} q_\ell$,  problem  \eqref{probl} is reduced to
\begin{equation}\label{probl+red}
\left\{\begin{array}{rcll}
 \partial_t \mathrm{w}_\varepsilon  -  \partial_{x_1}\!\Big( \varepsilon\, \partial_{x_1} \mathrm{w}_\varepsilon -
\big(\mathrm{w}_\varepsilon + g\big) \, v_1(\mathrm{w}_\varepsilon + g,x_1,t)\Big) & & &
 \\[2mm]
 -  \varepsilon\, \nabla_{\overline{x}_1} \boldsymbol{\cdot} \Big(\nabla_{\overline{x}_1} \mathrm{w}_\varepsilon -
\big(\mathrm{w}_\varepsilon + g\big) \, \overline{V}_\varepsilon\Big)
  & = & - \partial_t g &
    \text{in} \ \Omega_\varepsilon^T,
\\[2mm]
-   \partial_{\vec{\nu}_\varepsilon} \mathrm{w}_\varepsilon +    \big(\mathrm{w}_\varepsilon + g\big) \, \overline{V}_\varepsilon \cdot \vec{\nu}_\varepsilon &  = &  \Phi_\varepsilon(\mathrm{w}_\varepsilon,x,t) &
   \text{on} \ \Gamma_\varepsilon^T,
\\[2mm]
\mathrm{w}_\varepsilon\big|_{t=0}&=& 0, & \text{in} \ \Omega_{\varepsilon},
\\[2mm]
 \mathrm{w}_\varepsilon\big|_{x_1= 0} = \mathrm{w}_\varepsilon\big|_{x_1= \ell} &=&0,  &
 \end{array}\right.
\end{equation}
where $\nabla_{\overline{x}_1} := \big(\partial_{x_2}, \partial_{x_3}\big),$ $g(x_1,t):= \frac{x_1}{\ell} q_\ell(t),$ $\Phi_\varepsilon(\mathrm{w}_\varepsilon,x, t) := \varphi_\varepsilon(\mathrm{w}_\varepsilon + g, x, t).$

Denote by
$\mathcal{H}_\varepsilon^*$ the dual space to the Sobolev space
$$
\mathcal{H}_\varepsilon = \big\{u \in W^{1}_{2}(\Omega_\varepsilon)\colon  u|_{\Upsilon_\varepsilon(0)} = u|_{\Upsilon_\varepsilon(\ell)} =0\big\},
$$
with ${\|u\|}_{\mathcal{H}_\varepsilon}:= {\|\nabla_x u\|}_{L^2(\Omega_\varepsilon)}.$
Here $u|_{\Upsilon_\varepsilon(0)}$ and $u|_{\Upsilon_\varepsilon(\ell)}$ are the traces of $u$ on the
 bases
$$
\Upsilon_\varepsilon(0):=  \left\{x\colon \ x_1=0, \ \   \varepsilon^{-1} \overline{x}_1 \in \varpi \right\} \ \ \text{and} \ \
\Upsilon_\varepsilon(\ell):=  \left\{x \colon \ x_1=\ell, \ \   \varepsilon^{-1} \overline{x}_1 \in \varpi \right\}
$$
of the cylinder $\Omega_\varepsilon.$

Recall that a function
$\mathrm{w}_\varepsilon$ from the Banach space
$$
\mathcal{W}_\varepsilon:=\big\{u \in L^2 \left(0, T; \, \mathcal{H}_\varepsilon \right)\colon\ \ \partial_t u \in L^2 \left(0, T; \, \mathcal{H}_\varepsilon^* \right)\big\}
$$
equipped with the norm
$$
{\|u\|}_{\mathcal{W}_\varepsilon} = {\|u\|}_{L^2 \left(0, T; \, \mathcal{H}_\varepsilon \right)} +
{\|\partial_t u\|}_{L^2\left(0, T; \, \mathcal{H}_\varepsilon^* \right)}
$$
is called a weak solution to  problem~(\ref{probl+red}) if it satisfies  $\mathrm{w}_\varepsilon|_{t=0}=0$ and  the integral identity
\begin{equation}\label{int-identity}
          \langle\partial_t \mathrm{w}_\varepsilon,  \eta\rangle_\varepsilon
 +   \int\limits_{\Omega_\varepsilon}\sum_{i=1}^{3}a_i\big(x,t,\mathrm{w}_\varepsilon, \partial_{x_i}\mathrm{w}_\varepsilon\big) \partial_{x_i}\eta(x) \, dx
  +  \varepsilon  \int\limits_{\Gamma_\varepsilon} \Phi_\varepsilon(\mathrm{w}_\varepsilon,x, t)\, \eta(x) \, d\sigma_x
 = - \int\limits_{\Omega_\varepsilon} \partial_t g \, \eta(x) \, dx
\end{equation}
for any function $\eta \in \mathcal{H}_\varepsilon$ and a.e. $t\in(0, T).$
Here $\langle\cdot,\cdot\rangle_\varepsilon$ is the duality pairing of $\mathcal{H}_\varepsilon^*$ and $\mathcal{H}_\varepsilon.$
It is well-known that the space $\mathcal{W}_\varepsilon$ is continuously embedded in the space $ C\big([0,T]; L^2(\Omega_\varepsilon)\big),$ i.e.  $\mathrm{w}_\varepsilon\in C\big([0,T]; L^2(\Omega_\varepsilon)\big),$
and thus the equality $\mathrm{w}_\varepsilon|_{t=0}=0$ makes sense. In integrals over the surface $\Gamma_\varepsilon,$
we understand the  integrands as traces of the respective functions.
In \eqref{int-identity}
\begin{gather}\label{coeff+1}
  a_1(x,t,u,p_1):= \varepsilon p_1 -   \big(u + g(x_1,t)\big) \, v_1(u + g(x_1,t), x_1,t)
   \\ \label{coeff+2}
 a_i(x,t,u,p_i):= \varepsilon p_i - \varepsilon \big(u + g(x_1,t)\big) \, v_i(x_1, \tfrac{\overline{x}_1}{\varepsilon},t), \quad i\in \{2, 3\}.
\end{gather}
Note that the function $a_1$ is linear in $p_1$ and nonlinear in $u$, and   $a_i$ is linear in $p_i$ and $u$.

\begin{lemma}
 There exists a  unique weak solution to problem~\eqref{probl+red} for any fixed $\varepsilon > 0.$
 In addition, $\mathrm{w}_\varepsilon$ is a classical solution, e.g., it is continuous in $\overline{\Omega_\varepsilon} \times [0,T]$
 and belongs to the space $C^{2,1}\big(\Omega_\varepsilon^T\big) \cap C^{1,0}\big(\Omega_\varepsilon^T \cup \Gamma^T_\varepsilon\big).$
 \end{lemma}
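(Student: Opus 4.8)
The plan is to exploit the \emph{semilinear} structure of problem~\eqref{probl+red}: by \eqref{coeff+1}--\eqref{coeff+2} the principal second-order part is the fixed linear operator $-\varepsilon\Delta_x$, while all nonlinearities enter only through the first-order convective flux $(\mathrm{w}_\varepsilon+g)\,v_1(\mathrm{w}_\varepsilon+g,x_1,t)$ and the boundary interaction $\Phi_\varepsilon$. I would therefore proceed in three stages: construct a weak solution by Galerkin approximation together with uniform a~priori estimates, prove uniqueness by an energy argument, and finally bootstrap from a bounded weak solution to a classical one by freezing the nonlinear coefficients and invoking linear parabolic theory.

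For existence I would take a basis $\{\psi_k\}$ of $\mathcal{H}_\varepsilon$ (for instance the eigenfunctions of $-\Delta_x$ subject to the homogeneous base conditions) and seek Galerkin approximations $\mathrm{w}_\varepsilon^m=\sum_{k\le m}c_k(t)\psi_k$ solving the projected version of \eqref{int-identity}. Testing with $\mathrm{w}_\varepsilon^m$ gives the basic energy identity; the diffusive term yields the coercive contribution $\varepsilon\|\nabla_x\mathrm{w}_\varepsilon^m\|_{L^2(\Omega_\varepsilon)}^2$, the convective terms are absorbed via Cauchy--Young using the bounds $0\le v_1\le C_0$ and $|s\,\partial_s v_1|\le C_1$ from \eqref{v_1}, and the boundary term is controlled by the linear growth \eqref{phi_cond} of $\varphi$ together with the trace inequality on $\Gamma_\varepsilon$ (with constants allowed to depend on the fixed $\varepsilon$). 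A Gronwall argument then bounds $\mathrm{w}_\varepsilon^m$ in $L^\infty(0,T;L^2(\Omega_\varepsilon))\cap L^2(0,T;\mathcal{H}_\varepsilon)$ and $\partial_t\mathrm{w}_\varepsilon^m$ in $L^2(0,T;\mathcal{H}_\varepsilon^*)$. In parallel I would establish an $L^\infty$ bound by a truncation (De~Giorgi/Stampacchia) argument: testing with $(\mathrm{w}_\varepsilon-M)_+$ and using again the linear growth of $\varphi$ shows that $\{\mathrm{w}_\varepsilon>M\}$ is negligible for $M$ large, so $\|\mathrm{w}_\varepsilon\|_{L^\infty}\le\mathrm{const}$. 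With these estimates, Aubin--Lions compactness lets me pass to the limit $m\to\infty$; the nonlinear terms converge because $\mathrm{w}_\varepsilon^m\to\mathrm{w}_\varepsilon$ strongly in $L^2(\Omega_\varepsilon^T)$ and the maps $s\mapsto s\,v_1(s,\cdot)$ and $s\mapsto\varphi(s,\cdot)$ are continuous, yielding a weak solution $\mathrm{w}_\varepsilon\in\mathcal{W}_\varepsilon$.

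Uniqueness follows by taking two weak solutions, subtracting the identities~\eqref{int-identity}, and testing with their difference $z$. Because of the already established $L^\infty$ bound, the flux $s\mapsto s\,v_1(s,\cdot)$ is Lipschitz on the relevant range (by \eqref{v_1}) and $\varphi$ is Lipschitz in $s$ by \eqref{phi_cond+1}; hence the difference of the nonlinear volume and boundary terms is bounded by $C\|z\|_{L^2}^2$ after using Young's inequality to move an extra gradient onto the coercive diffusion term. A standard Gronwall inequality then forces $z\equiv0$.

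The decisive step, and the one where the theorems of~\cite{Lad_Sol_Ura_1968} do not apply verbatim, is the passage from a bounded weak solution to a \emph{classical} one with the stated regularity. Once $\mathrm{w}_\varepsilon\in L^\infty$, I would regard $v_1(\mathrm{w}_\varepsilon+g,\cdot)$ and the boundary datum as given, so that $\mathrm{w}_\varepsilon$ solves a \emph{linear} parabolic equation with bounded measurable lower-order coefficients; the De~Giorgi--Nash--Moser theory then yields interior and up-to-$\Gamma_\varepsilon$ H\"older continuity. Knowing $\mathrm{w}_\varepsilon$ is H\"older upgrades the frozen coefficients to H\"older-continuous functions, and linear Schauder estimates bootstrap $\mathrm{w}_\varepsilon$ to $C^{2,1}(\Omega_\varepsilon^T)\cap C^{1,0}(\Omega_\varepsilon^T\cup\Gamma_\varepsilon^T)$. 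The genuine obstacle here is twofold: the mixed boundary conditions create edges where the Dirichlet bases $\Upsilon_\varepsilon(0),\Upsilon_\varepsilon(\ell)$ meet the Robin lateral surface $\Gamma_\varepsilon$, and one must secure continuity up to $t=0$. Both are handled precisely by the zero- and first-order matching conditions~\eqref{match_conditions} in assumption~{\Cthree}, together with the vanishing of $v_2,v_3,\varphi$ near the bases from~{\bf A1} and~{\Cone}, which provide the compatibility needed so that no corner singularities obstruct the regularity up to the boundary.
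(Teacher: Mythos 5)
Your proposal follows essentially the same route as the paper: Galerkin approximation with the Laplacian eigenbasis, energy estimates plus Gronwall, an $L^\infty$ bound by truncation (the paper invokes Theorem 2.1 of Chapter V in \cite{Lad_Sol_Ura_1968}, which is the same De Giorgi--type argument), uniqueness via the Lipschitz structure of the flux and of $\varphi$ on the bounded range, and a freeze-the-coefficients bootstrap through linear parabolic H\"older/Schauder theory. The only cosmetic differences are that the paper controls the lateral boundary integrals via the identity \eqref{identity} rather than a generic trace inequality, and that it sidesteps the Dirichlet--Robin edges entirely by localizing with cut-off functions whose supports avoid them (the claimed regularity is only up to the open lateral surface), rather than relying on the compatibility conditions there.
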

\begin{remark}
Here and further in this section, constants in all inequalities are positive and independent of the solution $\mathrm{w}_\varepsilon.$ If a constant depends on the parameter $\varepsilon,$ this is indicated in brackets next to the constant. Mostly constants with the same indices in different inequalities are different.
\end{remark}
\begin{proof}
 {\bf 1.} First, we prove a priori estimates \eqref{apriori} and \eqref{func+1}  for a weak solution  $\mathrm{w}_\varepsilon.$
 The existence of the last integral in \eqref{int-identity} it follows from assumptions ${\Cone},$ ${\Ctwo}$  and the identity from \cite[\S 2]{M-MMAS-2008}
  \begin{equation}\label{identity}
\varepsilon \int_{\Gamma_{\varepsilon}} \phi(x) \, d\sigma_x
= \frac{\upharpoonleft\!\!\partial\varpi\!\!\upharpoonright_1}{\upharpoonleft\!\!\varpi\!\!\upharpoonright_2} \int_{\Omega_{\varepsilon}}  \phi \,dx +
\varepsilon  \int_{\Omega_{\varepsilon}}
\nabla_{\overline{\xi}_1}Y(\overline{\xi}_1)|_{\overline{\xi}_1=\frac{\overline{x}_1}{\varepsilon}}
\cdot \nabla_{\overline{x}_1}\phi \, dx
 \end{equation}
for all $\phi \in W^{1}_2(\Omega_{\varepsilon}).$
Here $\upharpoonleft\!\!S\!\!\upharpoonright_n$  is the $n$-dimensional Lebesgue measure of a set $S,$
the function $Y$ is a
unique solution of the following problem
\begin{equation*}
\Delta_{\overline{\xi}_1} Y(\overline{\xi}_1)= \frac{\upharpoonleft\!\!\partial\varpi\!\!\upharpoonright_1}{\upharpoonleft\!\!\varpi\!\!\upharpoonright_2} \;
 \; \; \mbox{in} \; \;
\varpi, \quad \partial_{\vec{\nu}(\overline{\xi}_1)}Y=1 \; \; \mbox{on} \; \;
\partial\varpi, \quad \int_{\varpi}Y(\overline{\xi}_1)\, d\overline{\xi}_1=0,
\end{equation*}
 $\vec{\nu}(\overline{\xi}_1)=\bigl(\nu_2(\overline{\xi}_1), \nu_3(\overline{\xi}_1)\bigr)$
is the outward normal to $\partial\varpi.$ From \eqref{identity}  it follows (see \cite[\S 2]{M-MMAS-2008}) the inequality
\begin{equation}\label{ineq1}
      \varepsilon \int_{\Gamma_\varepsilon} u^2 \, d\sigma_x \leq C \Bigg( \varepsilon^2
      \int_{\Omega_\varepsilon}|\nabla_{\overline{x}_1} u|^2 \, dx
 +    \int_{\Omega_\varepsilon} u^2 \, dx \Bigg) \quad \forall \, u\in W^{1}_{2}(\Omega_\varepsilon),
\end{equation}
This inequality and  \eqref{phi_cond} yield
\begin{equation}\label{phi_int}
\varepsilon  \Big|\int_{\Gamma_\varepsilon} \Phi_\varepsilon(\mathrm{w}_\varepsilon,x, t) \, \mathrm{w}_\varepsilon \, d\sigma_x\Big| \le \varepsilon^2 C_1  \int_{\Omega_\varepsilon}|\nabla_{\overline{x}_1} \mathrm{w}_\varepsilon|^2 \, dx
 +    C_2 \int_{\Omega_\varepsilon} |\mathrm{w}_\varepsilon|^2 \, dx.
\end{equation}

Using the boundedness of $v_1$ and  Cauchy's inequality with $\delta>0 \ (ab \leq \delta a^2 + \tfrac{b^2}{4\delta}),$  we get that
\begin{equation}\label{identity+3}
  \Big|\int_{\Omega_\varepsilon} \big(u + g\big) \, v_1(u + g, x_1,t) \, \partial_{x_1}u \, dx\Big| \le
    \varepsilon \delta  \int_{\Omega_\varepsilon} |\partial_{x_1}u|^2 \, dx
         + \frac{C_0}{\varepsilon \delta} \int_{\Omega_\varepsilon} u^2 \, dx + C_3 \varepsilon
\end{equation}
for any $ u\in \mathcal{H}_\varepsilon.$

Taking $\eta = \mathrm{w}_\varepsilon$ in \eqref{int-identity} and then integrating it over the interval $(0, \tau)$ with respect to $t$, where $\tau$ is any number from $(0, T],$ we get
 \begin{equation}\label{identity+1}
     \frac{1}{2}\int\limits_{\Omega_\varepsilon}
     \mathrm{w}^2_\varepsilon(x,\tau) \, dx
 +   \int\limits_{\Omega^{\tau}_\varepsilon}\sum_{i=1}^{3}a_i\big(x,t,\mathrm{w}_\varepsilon, \partial_{x_i}\mathrm{w}_\varepsilon\big) \, \partial_{x_i}\mathrm{w}_\varepsilon\, dx dt
   +  \varepsilon  \int\limits_{\Gamma^{\tau}_\varepsilon} \Phi_\varepsilon(\mathrm{w}_\varepsilon,x, t) \, \mathrm{w}_\varepsilon \, d\sigma_x dt
      = - \int\limits_{\Omega^{\tau}_\varepsilon} \partial_t g \, \mathrm{w}_\varepsilon\, dx dt,
\end{equation}
where $\Omega^{\tau}_\varepsilon = \Omega_\varepsilon \times(0, \tau), \ \Gamma^{\tau}_\varepsilon = \Gamma_\varepsilon \times(0, \tau).$

With the help of \eqref{phi_int} and \eqref{identity+3},  taking $\varepsilon$ such that $\varepsilon C_1 \le \frac{1}{4}$ in \eqref{phi_int} and applying Cauchy inequality with a suitable  $\delta,$  we derive from  \eqref{identity+1}  the inequality
\begin{equation}\label{ineq+1}
  \int_{\Omega_\varepsilon}  \mathrm{w}^2_\varepsilon(x,\tau) \, dx
 +  \varepsilon \int_{\Omega^{\tau}_\varepsilon} |\nabla_x \mathrm{w}_\varepsilon(x,t)|^2 \, dxdt \le
 \frac{C_1}{\varepsilon} \int_{\Omega^\tau_\varepsilon} \mathrm{w}^2_\varepsilon(x,t)\, dx dt + C_2\, \varepsilon \, \tau.
\end{equation}

Gronwall's lemma applied to the inequality
$$
\int_{\Omega_\varepsilon}  \mathrm{w}^2_\varepsilon(x,\tau) \, dx
 \le
 \frac{C_1}{\varepsilon} \int_{0}^{\tau} \int_{\Omega_\varepsilon} \mathrm{w}^2_\varepsilon(x,t) \, dx dt + C_2 \, \varepsilon\, \tau
$$
gives the estimate
$$
\int_{\Omega^\tau_\varepsilon} \mathrm{w}^2_\varepsilon(x,t) \, dx dt \le \frac{C_2}{C_1}\, \varepsilon^2 \, T \, \big( \exp(\tfrac{C_1}{\varepsilon} T) -1\big),
$$
which together with \eqref{ineq+1} leads to the inequality
\begin{equation}\label{apriori}
  \max_{t\in [0, T]}{\|\mathrm{w}_\varepsilon\|}_{L^2(\Omega_\varepsilon)}  +   {\|\nabla_x \mathrm{w}_\varepsilon\|}_{L^2(\Omega^T_\varepsilon)} \le C_0 \sqrt{T \, \exp(\tfrac{C_1}{\varepsilon} T)}.
 \end{equation}

Well-known embedding theorem (see e.g. \cite[Chapt. II, \S 2, \S3]{Lad_Sol_Ura_1968})  together with \eqref{apriori} yield the inequality
\begin{equation*}
  { \|\mathrm{w}_\varepsilon\|}_{L^{q,r}(\Omega^T_\varepsilon)} := \bigg(\int_{0 }^{T}\Big(\int_{\Omega_\varepsilon} |\mathrm{w}_\varepsilon|^q dx\Big)^{\frac{r}{q}} dt\bigg)^{\frac{1}{r}}
      \le C_1\Big(\max_{t\in [0, T]} {\|\mathrm{w}_\varepsilon\|}_{L^2(\Omega_\varepsilon)}  +   \|\nabla_x \mathrm{w}_\varepsilon\|_{L^2(\Omega^T_\varepsilon)}\Big),
\end{equation*}
where $q\in [2, 6],$ $r\in [2,+\infty]$ and
$
  \frac{1}{r} + \frac{3}{2q} = \frac{3}{4}.
$
From the last equality, if $q=r,$ we deduce that $q=\frac{10}{3}.$ Therefore,
\begin{equation}\label{estim+2}
\|\mathrm{w}_\varepsilon\|_{L^{\frac{10}{3}}(\Omega^T_\varepsilon)} \le C_1(\varepsilon).
\end{equation}

Take any $\eta \in \mathcal{H}_\varepsilon,$ with ${\|\eta\|}_{\mathcal{H}_\varepsilon}:= {\|\nabla_x \eta\|}_{L^2(\Omega_\varepsilon)} \le 1.$ Owing to assumptions ${\bf A1},$ ${\bf A2}$ and ${\Ctwo},$ for each $i\in\{1, 2, 3\}$
we obtain
\begin{equation}\label{v1+1}
  |a_i(x,t,u,p_i)| \le \varepsilon |p_i| + C_1 |u| +C_2 \quad \text{for all} \ \ (x,t) \in \overline{\Omega_\varepsilon^T}, \ u\in \Bbb R, \ \ p_i\in \Bbb R.
\end{equation}
Using these inequalities and \eqref{ineq1}, we derive from \eqref{int-identity} the inequality
$$
|\langle\partial_t \mathrm{w}_\varepsilon,  \eta\rangle_\varepsilon| \le C_1 {\|\mathrm{w}_\varepsilon\|}_{\mathcal{H}_\varepsilon}
  + C_2 \varepsilon  \quad \text{for any} \ \ \eta \in \mathcal{H}_\varepsilon, \ \ {\|\eta\|}_{\mathcal{H}_\varepsilon} \le 1.
$$
This means that  $ {\|\partial_t \mathrm{w}_\varepsilon\|}_{\mathcal{H}_\varepsilon^*} \le C_1 {\|\mathrm{w}_\varepsilon\|}_{\mathcal{H}_\varepsilon}
  + C_2\varepsilon$
holds, and consequently
\begin{equation}\label{func+1}
\|\partial_t \mathrm{w}_\varepsilon\|^2_{L^2 \left(0, T; \, \mathcal{H}_\varepsilon^* \right)} = \int_{0}^{T}\|\partial_t \mathrm{w}_\varepsilon\|^2_{\mathcal{H}_\varepsilon^*} dt \le C_1  \|\nabla_x \mathrm{w}_\varepsilon\|^2_{L^2(\Omega^T_\varepsilon)}
  + C_2\varepsilon^2 T \le C_3(\varepsilon).
\end{equation}

\smallskip

{\bf 2.} For the  proof of the existence of a weak solution to problem \eqref{probl+red} we can follow  the proof of Theorem 6.7 \cite[Chapt.~V, \S 6]{Lad_Sol_Ura_1968} closely,  with the only difference being the choice of a fundamental system $\{\Psi_m\}_{m\in \Bbb N}$ in the space $\mathcal{H}_\varepsilon$ to apply Galerkin's method to our problem. In our case, these can be eigenfunctions of the spectral problem
\begin{equation}\label{SP}
  -\Delta_x \Psi = \lambda \Psi  \ \ \text{in} \ \Omega_\varepsilon, \quad
\partial_{\overline{\nu}_\varepsilon} \Psi= 0 \ \  \text{on} \ \Gamma_\varepsilon, \quad   \Psi|_{x_1 =0}= \Psi|_{x_1 =\ell} =0,
\end{equation}
which have the form
$$
\Psi(x) = \sqrt{\frac{2}{\ell}} \, \sin\Big(\frac{\pi n}{\ell} \, x_1\Big)\, \Theta_k\Big(\frac{\overline{x}_1}{\varepsilon}\Big) , \quad n \in \Bbb N, \  \ k \in \Bbb N_0:= \Bbb N \cup \{0\},
$$
where $\{\Theta_k(\overline{\xi}_1), \  \overline{\xi}_1=(\xi_2, \xi_3) \in \varpi \}_{k \in \Bbb N_0}$ are orthonormal in $L^2(\varpi)$ eigenfunctions of the spectral Neumann problem
$$
-\Delta_{\overline{\xi}_1} \Theta  = \tau  \, \Theta   \ \ \text{in} \ \varpi, \quad
\partial_{\bar{\nu}_{\bar{\xi}_1}} \Theta = 0 \ \  \text{on} \ \partial\varpi.
$$
We arrange eigenvalues of problem \eqref{SP} so they form a non-decreasing sequence: $0 < \lambda_1 < \lambda_2 \le \ldots \le \lambda_m \le \ldots \to +\infty,$ and denote by $\Psi_m$ the eigenfunction corresponding to the eigenvalue $\lambda_m.$ Clearly,
$(\Psi_m, \Psi_k)_{L^2(\Omega_\varepsilon)} = \delta_{m, k}$ and
$(\Psi_m, \Psi_k)_{\mathcal{H}_\varepsilon} = \lambda_k\delta_{m, k},$ where $\delta_{m, k}$ is the Kronecker delta.
In addition, since the boundary $\partial \varpi$ is smooth, the eigenfunctions $\{\Psi_m\}_{m\in \Bbb N}$ are smooth in $\overline{\Omega}_\varepsilon$ and form a basis in the space $W^{2r}_2(\Omega_\varepsilon) \cap \mathcal{H}_\varepsilon$ for each $r\in \Bbb N.$

For any $N\in \Bbb N$ we seek an approximation in the usual form
\begin{equation*}
  U_N(x,t) := \sum_{m=1}^{N} d_{m, N}(t) \, \Psi_m(x),
\end{equation*}
where the coefficients $d_{m, N}(t), \ t\in[0,T], \ m\in\{1,\ldots,N\},$ are solutions of the system of ordinary differential equations
\begin{equation}\label{system}
 \int\limits_{\Omega_\varepsilon}\partial_t  U_N \,    \Psi_k \, dx
 +   \int\limits_{\Omega_\varepsilon}\sum_{i=1}^{3}a_i\big(x,t,U_N, \partial_{x_i}U_N\big) \partial_{x_i}\Psi_k \, dx
  +  \varepsilon  \int\limits_{\Gamma_\varepsilon}
  \Phi_\varepsilon\big(U_N, x, t\big) \, \Psi_k \, d\sigma_x
 =- \int\limits_{\Omega_\varepsilon}\partial_t g \, \Psi_k \, dx,
 \end{equation}
$$
d_{k, N}(0)=0,   \quad k\in \{1,\ldots,N\},
$$
which can be rewritten as
\begin{equation}\label{system1}
  d^{\, \prime}_{k, N}(t) + L_k(t, \vec{\mathbf{d}}_N) = f_k(t),
  \quad
d_{k, N}(0)=0,   \quad k\in \{1,\ldots,N\},
\end{equation}
where $\vec{\mathbf{d}}_N= \big(d_{1, N}(t),\ldots,d_{N, N}(t)\big),$ \ $f_k(t) = - \int_{\Omega_\varepsilon} \partial_t g \, \Psi_k \, dx,$
  \begin{align*}
    L_k(t, \vec{\mathbf{d}}_N) & :=
    \varepsilon\, \lambda_k\, d_{k, N}(t)
    -
  \int_{\Omega_\varepsilon}\big(U_N + g\big) \, v_1(U_N + g,x_1,t) \, \partial_{x_1}\Psi_k\,dx
    \\
    & - \varepsilon
  \int_{\Omega_\varepsilon}  \big(U_N + g\big) \, \overline{V}_\varepsilon \cdot \nabla_{\overline{x}_1}\Psi_k\,  dx
         + \int_{\Gamma_\varepsilon} \Phi_\varepsilon\big(U_N , x, t\big) \, \Psi_k \, d\sigma_x.
  \end{align*}

Define the vector-function $\overrightarrow{\mathbf{L}}\colon [0,T]\times\Bbb R^N \mapsto \Bbb R^N$ as follows
$$
\overrightarrow{\mathbf{L}}= \big( L_1(t, \vec{\mathbf{d}}),\ldots, L_N(t, \vec{\mathbf{d}})\big), \quad  \vec{\mathbf{d}} \in \Bbb R^N.
$$
Thanks to the smoothness of the functions $a_1, a_2, a_3$ and $\varphi_\varepsilon,$ the function $\overrightarrow{\mathbf{L}}$ is continuous in the  domain of definition, and similar as in the first item of the proof, using \eqref{phi_int} and \eqref{identity+3},  we deduce that
$$
\overrightarrow{\mathbf{L}}(t, \vec{\mathbf{d}}) \cdot \vec{\mathbf{d}} \ge -  C_1 \, |\vec{\mathbf{d}}|^2 - C_2 \quad \text{for all} \ \
t\in [0, T], \ \ \vec{\mathbf{d}} \in \Bbb R^N.
$$
Then by Theorem 3 \cite{Kato-1976} (see also Lemma 4 \cite{Dubinski-1968}) the system   \eqref{system1} has a solution defined for all $t\in [0,T].$

Multiplying equation \eqref{system1} by  $d_{k, N}$ and summing for $k\in \{1,\ldots,N\},$ we get \eqref{identity+1} for $U_N.$
Therefore, elements of the sequence $\{U_N\}_{N\in \Bbb N}$ satisfy estimates \eqref{apriori}, \eqref{estim+2} and \eqref{func+1},  where the constants are independent of~$N.$ Consequently there exists a subsequence of $\{U_N\}_{N\in \Bbb N},$
again denoted by $\{U_N\}_{N\in \Bbb N},$ and a function  $\mathrm{w}_\varepsilon\in \mathcal{W}_\varepsilon \cap
C\left([0, T]; \, L^2(\Omega_\varepsilon)\right)$  such that
\begin{gather}\label{conv1}
 U_N   \rightharpoonup \mathrm{w}_\varepsilon \quad \text{weakly in} \ \ L^2 \left(0, T; \, \mathcal{H}_\varepsilon\right),
  \\ \label{conv2}
  \partial_t U_N \rightharpoonup \partial_t\mathrm{w}_\varepsilon \quad \text{weakly in} \ \ L^2 \left(0, T; \, \mathcal{H}_\varepsilon^* \right) \quad \text{as} \ \ N\to +\infty.
\end{gather}
According the compactness lemma by Lions-Aubin \cite{Aubin,Lions}
\begin{equation}\label{conv3}
  U_N   \rightarrow  \mathrm{w}_\varepsilon \quad \text{strongly in} \ \ L^2(\Omega_\varepsilon^T) \ \ \text{and in } \
C([0,T]; \mathcal{H}_\varepsilon^*),
\end{equation}
and so we can regard that
\begin{equation}\label{conv4}
  U_N   \rightarrow  \mathrm{w}_\varepsilon \quad \text{almost everywhere in} \ \ \Omega_\varepsilon^T.
\end{equation}

Now we show that $\mathrm{w}_\varepsilon$ is a weak solution to problem~\eqref{probl}. First we note  from \eqref{conv1},  \eqref{conv2} and the fact  that $U_N|_{t=0} =0$ it follows that $\mathrm{w}_\varepsilon|_{t=0} =0.$

Consider the set of functions
$$
\mathfrak{M}:= \left\{\Xi = \sum_{k=1}^{M} d_k(t) \, \Psi_k(x),\ t\in[0,T], \ x\in \overline{\Omega}_\varepsilon\colon \  d_k\in C^1([0,T]), \quad M\in \Bbb N\right\}
$$
which  is obviously dense in  the space $L^2 \left(0, T; \, \mathcal{H}_\varepsilon \right).$ Then we deduce from \eqref{system} the identity
\begin{multline}\label{identity+4}
   \int_{\Omega_\varepsilon^T}\partial_t  U_N \, \Xi \, dx dt + \varepsilon \int_{\Omega_\varepsilon^T}\nabla_x  U_N \cdot   \nabla_x \Xi \, dx dt
    - \varepsilon \int_{\Omega_\varepsilon^T}    (U_N + g) \, \overline{V}_\varepsilon \cdot \nabla_{\overline{x}_1} \Xi \, dx dt
  \\
- \int_{\Omega_\varepsilon^T} (U_N + g) \, v_1(U_N + g) \, \partial_{x_1}\Xi\, dx dt
+   \varepsilon  \int_{\Gamma_\varepsilon^T} \Phi_\varepsilon(U_N, x, t) \, \Xi \, d\sigma_x dt
\\ = -  \int_{\Omega_\varepsilon^T} \partial_t g \, \Xi \, dx dt \quad \text{for all} \ \ \Xi \in \mathfrak{M}.
\end{multline}
Using the relations \eqref{conv1} - \eqref{conv3},  we can pass  to the limit in \eqref{identity+4} as $N \to +\infty.$ It remains to find the limits of the integrals in the second line of~\eqref{identity+4}.

From \eqref{v_1} and \eqref{estim+2} we deduce that
\begin{equation*}
  \int_{\Omega_\varepsilon^T} \Big|(U_N + g) \, v_1(U_N + g,x_1,t)\Big|^{\frac{10}{3}} dx dt
\le C_1(\varepsilon).
\end{equation*}
This inequality and convergence \eqref{conv4} are sufficient conditions in Lemma 2.3 \cite[Chapt. II, \S2]{Lad_Sol_Ura_1968}, which states that the function sequence
$$
\{(U_N + g) \, v_1(U_N + g,x_1,t)\}_{N\in\Bbb N}
$$
converges to  the function $(\mathrm{w}_\varepsilon + g) \, v_1(\mathrm{w}_\varepsilon + g,x_1,t)$
 in the space $L^2(\Omega_\varepsilon^T).$

 To pass to the limit in the other integral, we use identity \eqref{identity}. Then
 \begin{align}\label{limit+2}
    \varepsilon  \int_{\Gamma_\varepsilon^T} \Phi_\varepsilon(U_N, x, t) \, \Xi \, d\sigma_x dt
   \ = &\  \frac{\upharpoonleft\!\!\partial\varpi\!\!\upharpoonright_1}{\upharpoonleft\!\!\varpi\!\!\upharpoonright_2} \int_{\Omega_{\varepsilon}^T}\varphi_\varepsilon(U_N + g, x, t) \, \Xi \, dxdt \notag
   \\
  & \ +  \varepsilon  \int_{\Omega_{\varepsilon}^T}\varphi_\varepsilon(U_N + g, x, t) \,
\nabla_{\overline{\xi}_1}Y(\overline{\xi}_1)|_{\overline{\xi}_1=\frac{\overline{x}_1}{\varepsilon}}
\cdot \nabla_{\overline{x}_1}\Xi \, dxdt \notag
\\
& \ +  \int_{\Omega_{\varepsilon}^T}\Xi \,
\nabla_{\overline{\xi}_1}Y(\overline{\xi}_1)|_{\overline{\xi}_1=\frac{\overline{x}_1}{\varepsilon}}
\cdot \nabla_{\overline{\xi}_1}\varphi(U_N + g, x_1,\overline{\xi}_1, t)|_{\overline{\xi}_1=\frac{\overline{x}_1}{\varepsilon}}\,  dx dt \notag
\\
&  \ + \varepsilon
\int_{\Omega_{\varepsilon}^T}\Xi \, \partial_s \varphi_\varepsilon(U_N + g, x, t) \,
\nabla_{\overline{\xi}_1}Y(\overline{\xi}_1)|_{\overline{\xi}_1=\frac{\overline{x}_1}{\varepsilon}}
\cdot \nabla_{\overline{x}_1}U_N  \,  dx dt.
 \end{align}
Thanks to assumption  ${\Cone}$ and estimate \eqref{estim+2} for $\{U_N\}_{N\in \Bbb N},$ again
 with the help of Lemma 2.3 \cite[Chapt. II, \S2]{Lad_Sol_Ura_1968}
we justify the $L^2(\Omega_\varepsilon^T)$-convergence of the sequences
$$
\{\varphi_\varepsilon(U_N + g, x, t)\}_{N\in\Bbb N}, \ \ \{\partial_s\varphi_\varepsilon(U_N + g, x, t)\}_{N\in\Bbb N}, \ \
\{\partial_{\xi_i}\varphi(U_N + g, x_1,\overline{\xi}_1,t)\}_{N\in\Bbb N}.
$$
 Therefore, going to the limit in \eqref{limit+2}, we get
 \begin{align*}
    \lim_{N\to +\infty} \varepsilon  \int_{\Gamma_\varepsilon^T} \Phi_\varepsilon(U_N + g, x, t) \, \Xi \, d\sigma_x dt
      \  = \ & \ \frac{\upharpoonleft\!\!\partial\varpi\!\!\upharpoonright_1}{\upharpoonleft\!\!\varpi\!\!\upharpoonright_2} \int_{\Omega_{\varepsilon}^T}\varphi_\varepsilon(\mathrm{w}_\varepsilon + g, x, t) \, \Xi \, dxdt
      \\ & \
      + \varepsilon \int_{\Omega_{\varepsilon}^T} \nabla_{\overline{\xi}_1}Y(\overline{\xi}_1)|_{\overline{\xi}_1=\frac{\overline{x}_1}{\varepsilon}}
\cdot \nabla_{\overline{x}_1}\big[\varphi_\varepsilon(\mathrm{w}_\varepsilon + g, x, t) \, \Xi \big] \, dxdt
\\
\ \stackrel{\eqref{identity}}{=} & \ \varepsilon  \int_{\Gamma_\varepsilon^T} \Phi_\varepsilon(\mathrm{w}_\varepsilon, x, t) \, \Xi \, d\sigma_x dt .
 \end{align*}

 Thus, $\mathrm{w}_\varepsilon$ is a weak solution to problem \eqref{probl+red}.

 \smallskip

 {\bf 3.} Next, we use the approach of the proof of Theorem 2.1 \cite[Chapt. V, \S2]{Lad_Sol_Ura_1968}, which says that every weak  solution is bounded. The proof of the theorem began with the deduction of the essential inequality (2.8) needed for  subsequent steps.
  In our scenario, a similar inequality can be proven in the same manner as inequality \eqref{ineq+1} but without the $\mu |u|^\delta \ (\delta> 2)$  on the right-hand side of this inequality. This means that
  \begin{equation}\label{est-solut}
    \esssup_{\Omega_\varepsilon^T}|\mathrm{w}_\varepsilon| \le C_0(\varepsilon)
  \end{equation}
 and the constant $C_0(\varepsilon)$ depends only on known quantities.

Assume that $u_\varepsilon$ and $\tilde{u}_\varepsilon$ are two weak bounded solutions to problem \eqref{probl}. Consequently
\begin{multline}\label{unique}
  \frac{1}{2}\int_{\Omega_\varepsilon}\big(u_\varepsilon(x,\tau) - \tilde{u}_\varepsilon(x,\tau)\big)^2 \, dx + \varepsilon \int_{\Omega^\tau_\varepsilon}|\nabla_x(u_\varepsilon - \tilde{u}_\varepsilon)|^2 dxdt
  \\
   - \int_{\Omega^\tau_\varepsilon} u_\varepsilon \, \big(v_1(u_\varepsilon,x_1,t) - v_1(\tilde{u}_\varepsilon,x_1,t)\big) \, \partial_{x_1}(u_\varepsilon - \tilde{u}_\varepsilon)\,  dxdt
 \\
   - \int_{\Omega^\tau_\varepsilon}v_1(\tilde{u}_\varepsilon,x_1,t) \, (u_\varepsilon - \tilde{u}_\varepsilon)\, \partial_{x_1}(u_\varepsilon - \tilde{u}_\varepsilon)\,  dxdt
   - \varepsilon \int_{\Omega^\tau_\varepsilon} (u_\varepsilon - \tilde{u}_\varepsilon)  \, \overline{V}_\varepsilon \cdot \nabla_{\overline{x}_1}(u_\varepsilon - \tilde{u}_\varepsilon)\,  dxdt
   \\
   + \varepsilon \int_{\Gamma^\tau_\varepsilon} \big(\varphi_\varepsilon(u_\varepsilon, x, t)
     -\varphi_\varepsilon(\tilde{u}_\varepsilon, x, t)\big) \, (u_\varepsilon - \tilde{u}_\varepsilon) \,  d\sigma_x dt =0
\end{multline}
for any $\tau\in [0,T].$ Using the boundedness of the solutions, the smoothness of the functions $v_1$ and $\varphi_\varepsilon$, Cauchy's inequality with a suitable $\delta>0 $ and \eqref{ineq1}, we derive from \eqref{unique} that for all $\tau \in [0,T]$
\begin{equation*}
 \int_{\Omega_\varepsilon}\big(u_\varepsilon(x,\tau) - \tilde{u}_\varepsilon(x,\tau)\big)^2 \, dx
  \le C_0(\varepsilon)\, \int_{0}^{\tau}\int_{\Omega_\varepsilon}\big(u_\varepsilon(x,\tau) - \tilde{u}_\varepsilon(x,\tau)\big)^2 \, dx \, dt,
\end{equation*}
whence by Gronwall's lemma we obtain  $u_\varepsilon = \tilde{u}_\varepsilon$ a.e. in $\Omega^T_\varepsilon.$

\medskip

 {\bf 4.} At this point, we show that due to our assumptions, the solution is classical.
  In \eqref{int-identity} we take a test function $\eta \in \mathcal{W}_\varepsilon$ with $\partial_t \eta \in L^2(\Omega^T_\varepsilon)$ and apply identity \eqref{identity} to  the integral over~$\Gamma_\varepsilon.$ Then integrating over interval $(t_0, t) \subset [0,T]$ and considering \eqref{phi_cond}, we get the inequality
  \begin{equation}\label{int-inequality}
     \int\limits_{\Omega_\varepsilon} \mathrm{w}_\varepsilon(x,\tau) \, \eta(x, \tau)\, dx \big|_{t_0}^{t}
       +  \int\limits_{t_0}^{t}\int\limits_{\Omega_\varepsilon}\Big[- \mathrm{w}_\varepsilon \, \partial_t\eta +  \sum_{i=1}^{3} \widetilde{a}_i\big(x,t,\mathrm{w}_\varepsilon, \partial_{x_i}\mathrm{w}_\varepsilon\big) \partial_{x_i}\eta \Big]\, dxdt
        \le   \int\limits_{t_0}^{t}\int\limits_{\Omega_\varepsilon}\big(\varepsilon |\nabla_{\overline{\xi}_1}\mathrm{w}_\varepsilon|^2  + C_1\big) |\eta | \, dx dt,
\end{equation}
where $ \widetilde{a}_1 =  {a}_1,$ $ \widetilde{a}_i =  {a}_i + \varepsilon\, \varphi_\varepsilon(\mathrm{w}_\varepsilon + g, x, t)\, \partial_{\xi_i} Y(\overline{\xi}_1)|_{\overline{\xi}_1=\frac{\overline{x}_1}{\varepsilon}}$ for $i\in \{2, 3\}$ (see \eqref{coeff+1} and \eqref{coeff+2}).

This inequality is used as the basis for all subsequent reasoning in the proofs of Theorem 1.1 (see Remarks 1.2 and 10.1 from \cite[Chapt. V]{Lad_Sol_Ura_1968}). Therefore, solution $\mathrm{w}_\varepsilon$ belongs to the H\"older space $H^{\alpha, \frac{\alpha}{2}}(\Omega_\varepsilon^T)$ with some $\alpha\in (0,1)$ (this is the set of functions belonging to
$H^{\alpha, \frac{\alpha}{2}}(\overline{Q})$ for any closed subdomain $\overline{Q} \subset \Omega_\varepsilon^T).$

Due to condition ${\bf A1},$ ${\bf A2},$ ${\Ctwo}$ and \eqref{est-solut}, it easy to verify all conditions of Theorem 3.1 \cite[Chapt. V]{Lad_Sol_Ura_1968}) for the coefficients (see \eqref{coeff+1} and \eqref{coeff+1}). Since $|\nabla_x \mathrm{w}_\varepsilon(x,0)|= 0,$ this theorem states that for any subdomain $Q$ of $\Omega_\varepsilon^T$ separated from $\partial \Omega_\varepsilon^T:= \partial \Omega_\varepsilon \times [0,T)$ by a positive distance $d$ the solution $ \mathrm{w}_\varepsilon \in W^{2,1}_2(Q)$ and the H\"older norm of its gradient is estimated in terms of known quantities.

The differential equation in problem \eqref{probl+red}  has only one coefficient that nonlinearly depends on $\mathrm{w}_\varepsilon$ and this equation can be considered as linear  with the coefficient  $\hat{v}_1(x,t):= v_1(\mathrm{w}_\varepsilon(x,t), x_1, t)$ that belongs to  $H^{\alpha, \frac{\alpha}{2}}(\Omega_\varepsilon).$ According to the results of \cite[Chapt. III and IV]{Lad_Sol_Ura_1968} (see e.g. Theorem 12.1 [Chapt. III]) on linear equations with smooth coefficients, we can assert that $\mathrm{w}_\varepsilon \in  H^{2+\alpha, 1+ \frac{\alpha}{2}}(\Omega_\varepsilon^T).$

To obtain  results on the  smoothness of the solution $\mathrm{w}_\varepsilon$  including the open bases
$\Upsilon_\varepsilon(0)$ and $\Upsilon_\varepsilon(\ell)$
of the cylinder $\Omega_\varepsilon,$ we rewrite the differential equation of  problem \eqref{probl+red} as follows
 \begin{equation}\label{new-dif-equa}
   \partial_t \mathrm{w}_\varepsilon  - \varepsilon \Delta_x \mathrm{w}_\varepsilon + A(x,t,\mathrm{w}_\varepsilon,\nabla_x \mathrm{w}_\varepsilon) = 0,
 \end{equation}
 where
 \begin{align*}
 A(x,t,u,p) =  & \  \big[v_1(u + g(x_1,t), x_1,t)
  + (u + g(x_1,t))\, \partial_s v_1(u + g(x_1,t),x_1,t) \big]
   \big(p_1 + \tfrac{1}{\ell} q_\ell(t)\big)
 \\
  & + (u + g(x_1,t))\, \partial_{x_1}v_1(s,x_1,t)|_{s=u + g(x_1,t)}
  \\
  & +  \varepsilon \overline{p}_1 \cdot \overline{V}_\varepsilon(x,t) + (u + g(x_1,t)) \, \big(\nabla_{\overline{\xi}_1} \boldsymbol{\cdot} \overline{V}(x_1,\overline{\xi}_1,t)\big)|_{\overline{\xi}_1=\frac{\overline{x}_1}{\varepsilon}}
    + \partial_t g(x_1,t),
 \end{align*}
 where $p=(p_1,p_2,p_3),$ $\overline{p}_1=(p_2,p_3).$ Due to conditions  ${\bf A1},$ ${\bf A2}$ and ${\Ctwo}$ it is easy to check that
 $$
| A(x,t,u,0)\, u| \le  C_1 \, u^2 + C_2 \quad \text{for all} \ \ (x,t) \in \overline{\Omega_\varepsilon^T}, \ \ u \in \Bbb R.
 $$
Also  for $(x,t) \in \overline{\Omega_\varepsilon^T},$ $|u| \le C_0$ $(C_0$ is taken from \eqref{est-solut}) and arbitrary $p\in \Bbb R^3$  the functions \eqref{coeff+1} and \eqref{coeff+2} satisfy the inequality
$$
  \sum_{i=1}^{3}\big(|a_i| + \partial_u a_i|\big) \, (1+ |p|) + \sum_{i,j=1}^{3} |\partial_{x_j} a_i|
          \le (\varepsilon |p| + C_1) \, ( 1 + |p|) + C_2 \le C_3 (1+ |p|)^2.
$$
Thus conditions a) - d) of Theorem 6.1 \cite[Chapt. V]{Lad_Sol_Ura_1968}) are satisfied.

Now take any convex subdomain $\tilde{\Omega}$ of $\Omega_\varepsilon$ with a smooth boundary intersecting, e.g., $\Upsilon_\varepsilon(0)$, but not $\overline{\Gamma_\varepsilon}.$
Applying this theorem to the equation for the function $\mathrm{w}_\varepsilon(x,t) \, \zeta(x),$ where $\zeta$ is a suitable smooth cut-off function in  $\overline{\Omega_\varepsilon}$ that is equal 1 in $\tilde{\Omega}$ and whose  support does not intersect  $\overline{\Gamma_\varepsilon},$ with the zero Dirichlet boundary condition and initial one, we conclude that $\mathrm{w}_\varepsilon \in C^{2,1}(\Omega_\varepsilon^T\cup \Upsilon^T_\varepsilon(0)),$ where
 $\Upsilon^T_\varepsilon(0):= \Upsilon_\varepsilon(0) \times (0,T).$

The boundary condition on the lateral surface  $\Gamma_\varepsilon^T$ we rewrite as follows
$\partial_{\overline{\nu}_\varepsilon} \mathrm{w}_\varepsilon + \Psi(x,t,\mathrm{w}_\varepsilon) =0,$ where
$$
\Psi(x,t,u) :=   \varphi_\varepsilon\big(u + g(x_1,t), x, t\big) -  \big(u + g(x_1,t)\big) \, \overline{V}_\varepsilon(x,t) \cdot \overline{\nu}_\varepsilon.
$$
Then, using ${\bf A1},$ ${\bf A2},$ ${\Cone}$ and ${\Ctwo},$ we verify that for any for arbitrary $u\in \Bbb R$ and $p\in \Bbb R^3$
\begin{gather*}
  |u\, A(x,t,u,p)| \le C_0 |p|^2 + C_1 u^2 + C_2 \quad \text{for all} \ \ (x,t) \in \overline{\Omega_\varepsilon^T},
   \\
  |u\, \Psi(x,t,u)| \le C_3 u^2 + C_4 \quad \text{for all} \ \ (x,t) \in \overline{\Gamma_\varepsilon^T};
\end{gather*}
and for all $(x,t) \in \overline{\Omega_\varepsilon^T},$ $|u| \le C_0$ $(C_0$ is taken from \eqref{est-solut}) and  $p\in \Bbb R^3$
\begin{gather*}
|A(x,t,u,p)| \le C_0 (1 + |p|^2),
\\
  |\partial_{p_i} A|\, (1+|p|) + |\partial_u A| + |\partial_t A| \le C_1 (1 + |p|^2),
   \\
 |\Psi(x,t,u)| + |\partial_{u}\Psi| + |\partial_{x_i}\Psi| +|\partial_{t}\Psi| + |\partial^2_{uu}\Psi| + |\partial^2_{ux}\Psi| + |\partial^2_{ut}\Psi| \le C_3.
\end{gather*}
These inequalities form the main conditions of Theorem 7.4 \cite[Chapt. V]{Lad_Sol_Ura_1968}.

Now let's take any part $\tilde{\Gamma}_{d_1,d_2} := \partial\Omega_\varepsilon \cap \{ x\colon \ 0< d_1\le x_1\le d_2 <\ell \}$ of $\Gamma_\varepsilon$ and denote by $\tilde{\Omega}$ an arbitrary subdomain of $\Omega_\varepsilon$ that is adjacent to $\tilde{\Gamma}_{d_1,d_2}$ and its closure does not intersect the bases $\overline{\Upsilon_\varepsilon(0)}$  and $\overline{\Upsilon_\varepsilon(\ell)};$ $\tilde{\Omega}^T:= \tilde{\Omega}\times (0,T).$
Applying Theorem 7.4 \cite[Chapt. V]{Lad_Sol_Ura_1968}  to the equation for the function $\mathrm{w}_\varepsilon(x,t) \, \zeta(x),$ where $\zeta$ is a suitable smooth cut-off function in  $\overline{\Omega_\varepsilon}$ that is equal 1 in $\tilde{\Omega}$ and whose  support does not intersect $\overline{\Upsilon_\varepsilon(0)}$  and $\overline{\Upsilon_\varepsilon(\ell)},$ with the corresponding Neumann condition and zero initial one,  we conclude that $\mathrm{w}_\varepsilon \in C^{2,1}(\Omega_\varepsilon^T\cup \Gamma_\varepsilon^T).$
Based on Theorem 6. 4 \cite[Chapt. V]{Lad_Sol_Ura_1968} and Theorem 4.4 \cite[Chapt. VI]{Lad_Sol_Ura_1968} and comments to them, it can be argued  that the solution $\mathrm{w}_\varepsilon$ is continuous in $\overline{\Omega}_\varepsilon \times [0,T].$
\end{proof}


\section{Formal asymptotic analysis}\label{Sec:4 Formal analysis}

We propose the following  asymptotic ansatz for the solution $u_\varepsilon$:
\begin{equation}\label{regul}
   \mathcal{U}_\varepsilon(x,t) :=  w_0(x_1,t) + \varepsilon\Big(w_1(x_1,t)  + u_1\Big(x_1, \dfrac{\overline{x}_1}{\varepsilon}, t \Big)  \Big)  + \varepsilon^2 u_2\Big(x_1, \dfrac{\overline{x}_1}{\varepsilon}, t \Big).
 \end{equation}

 Substituting $\mathcal{U}_\varepsilon$ in the differential equation of  problem \eqref{probl} and in the boundary condition at $\Gamma_\varepsilon$, using Taylor’s formula both for $v_1$ and for $\varphi_\varepsilon,$ collecting terms of the same powers of $\varepsilon$ and equating these sums  to zero, we get   Neumann problems for the coefficients $u_1$ and $u_2$
 in the rescale cross-section $\Upsilon_1 := \big\{ \overline{\xi}_1= (\xi_2, \xi_3) \in\Bbb{R}^2 \colon \  \overline{\xi}_1 \in \varpi_1 \big\}$ of the thin cylinder $\Omega_\varepsilon,$  namely
\begin{align*}
  \Delta_{\bar{\xi}_1}u_1(x_1, \bar{\xi}_1, t)
    = & \ \partial_t {w}_0(x_1,t) +  \partial_{x_1}\!\big(v_1({w}_0, x_1,t) \, {w}_0(x_1,t) \big)
     \\
     &+ w_0(x_1, t) \, \nabla_{\bar{\xi}_1} \boldsymbol{\cdot} \overline{V}(x_1, \bar{\xi}_1,t), \hspace{2.5cm} \overline{\xi}_1\in\Upsilon_1 \notag
    \\
\partial_{\vec{\nu}_{\bar{\xi}_1}} u_1(x_1, \bar{\xi}_1, t) =& \
 w_{0}(x_1,t) \, \overline{V}(x_1, \bar{\xi}_1,t) \boldsymbol{\cdot} \vec{\nu}_{\xi_1}
  - \varphi(w_{0}, x_1, \bar{\xi}_1, t), \quad \bar{\xi}_1 \in \partial\Upsilon_1,
  \\
 &  \langle u_1(x_1,\cdot, t) \rangle_{\Upsilon_1} =  0, \notag
    \end{align*}
(the first two relations were obtained equating coefficients at $\varepsilon^0);$  and at $\varepsilon^1$ we have
\begin{align*}
  \Delta_{\bar{\xi}_1}u_2
    = & \ \partial_t {w}_{1} + \partial_t {u}_1 +  \partial_{x_1}\!\big(\big(v_1({w}_0, x_1,t) +    \partial_s v_1({w}_0, x_1,t)\, {w}_0 \big) \big({w}_1 + {u}_1\big)\big) \notag
     \\
     &\ + {w}_1 \, \nabla_{\bar{\xi}_1} \boldsymbol{\cdot} \overline{V} + \nabla_{\bar{\xi}_1} \boldsymbol{\cdot}( {u}_1 \, \overline{V})
     - \partial^2_{x_1 x_1}{w}_0, \hspace{1.4cm} \bar{\xi}_1\in\Upsilon_1,  \notag
    \\
\partial_{\vec{\nu}_{\bar{\xi}_1}} u_2 =& \
 ({w}_1 + {u}_1) \, \overline{V} \boldsymbol{\cdot} \vec{\nu}_{\xi_1}
  - ({w}_1 + {u}_1) \, \partial_s \varphi(w_{0}, x_1, \bar{\xi}_1, t), \quad \bar{\xi}_1 \in \partial\Upsilon_1,
  \\
 &  \langle u_2(x_1,\cdot, t) \rangle_{\Upsilon_1} =  0, \notag
    \end{align*}
where  $\bar{\xi}_1= (\xi_2, \xi_3) :
= \big(\frac{x_2}{\varepsilon}, \frac{x_3}{\varepsilon}\big),$ $\Delta_{\bar{\xi}_1} := \frac{\partial^2}{\partial \xi_2^2} +  \frac{\partial^2}{\partial \xi_3^2},$  \  $ \partial_s v(s,x_1,t) := \frac{\partial v(s,x_1,t)}{\partial s }$,
\begin{equation}\label{overline_V}
  \overline{V}(x_1, \bar{\xi}_1,t) := \left( v_2(x_1, \bar{\xi}_1,t), \, v_3(x_1, \bar{\xi}_1,t) \right),
\end{equation}
 $\vec{\nu}_{\bar{\xi}_1} = \big(\nu_2(\xi_2,\xi_3),  \nu_3(\xi_2,\xi_3)\big)$  is the  outward unit normal to the boundary of~ $\Upsilon_1,$ $\partial_{\vec{\nu}_{\bar{\xi}_1}} $ is the derivative along $\vec{\nu}_{\bar{\xi}_1},$
 the last relations in these problems are added for the  uniqueness and
$$
  \langle u(x_1, \, \cdot \, ,  t ) \rangle_{\Upsilon_1} :=  \int_{\Upsilon_1} u(x_1, \bar{\xi}_1, t) \, d\bar{\xi}_1,
$$
the variables $x_1\in  [0, \ell]$ and $t \in [0, T]$ are  considered as parameters.

\begin{remark}\label{Rem_3-3}
Note that after the substitution of $\mathcal{U}_\varepsilon$, subsequent terms both in the differential equation and
in the boundary condition are of order $\mathcal{O}(\varepsilon^{2})$.
\end{remark}

By writing down the solvability condition for each problem, we derive the differential equations for the coefficients $w_0$ and $w_{1}$, namely
\begin{equation}\label{lim_0}
  \partial_t{w}_0  + \partial_{x_1}\!\big(v_1(w_0, x_1,t)\, {w}_0 \big) = - \widehat{\varphi}\big(w_0, x_1, t\big) \quad \text{in} \ \ \mathcal{I}\times (0, T),
\end{equation}
where
\begin{equation}\label{hat_phi}
  \widehat{\varphi}(w_0, x_1, t) := \frac{1}{\upharpoonleft\!\!\varpi\!\!\upharpoonright_2} \int_{\partial \Upsilon_1} \varphi\big(w_0(x_1,t), x_1, \bar{\xi}_1, t\big)\, d\sigma_{\bar{\xi}_1},
\end{equation}
and
\begin{equation}\label{lim_1}
  \partial_t{w}_{1} +   \partial_{x_1}\!\big(\big(v_1({w}_0,x_1,t) +    \partial_s v_1({w}_0,x_1,t)\, {w}_0 \big) \, {w}_{1} \big)
    + \partial_s \widehat{\varphi}(w_0, x_1, t) \, \, w_{1} = f_1(x_1,t)
  \end{equation}
in $\mathcal{I} \times (0, T),$ where
\begin{equation}\label{fun_1}
  f_1(x_1,t) :=  \partial^2_{x_1 x_1}{w}_0(x_1,t)   -
\frac{1}{\upharpoonleft\!\!\varpi\!\!\upharpoonright_2} \int_{\partial \Upsilon_1} \partial_s \varphi(w_0, x_1, \bar{\xi}_1, t)\,
u_{1}(x_1, \bar{\xi}_1, t) \, d\sigma_{\bar{\xi}_1}.
\end{equation}

Note that equations \eqref{lim_0} and \eqref{lim_1} are hyperbolic differentiation equations of the first order.
The first one is quasilinear, while the other is linear.


\subsection{Limit problem} Equation \eqref{lim_0}, equipped with the  boundary condition at $x_1=0$ and the initial condition, forms the \textit{limit problem}
\begin{equation}\label{limit_prob}
 \left\{\begin{array}{rcl}
\partial_t{w}_0  + \partial_{x_1}\!\left(v_1({w}_0, x_1,t)\, {w}_0 \right) & =& - \widehat{\varphi}\big({w}_0, x_1, t\big) \ \ \ \text{in} \ \ (0, \ell)\times (0, T),
    \\[2mm]
    w_0(0,  t) =  0, \ \  t \in [0, T], & &
    w_0(x_1,  0) =  0, \ \  x_1  \in [0, \ell],
  \end{array}\right.
\end{equation}
for  problem \eqref{probl}. It  can be rewritten in the Riemann invariant form
\begin{equation}\label{limit_prob+Red}
 \left\{\begin{array}{rcl}
\partial_t{w}_0(x_1,t)  +  \Lambda(w_0, x_1,t)\,  \partial_{x_1}{w}_0(x_1,t) & =& F({w}_0, x_1, t)\ \  \text{in}  \  (0, \ell)\times (0, T),
\\[2mm]
    w_0(0,  t) =  0, \ \  t \in [0, T], & &
    w_0(x_1,  0) =  0, \ \  x_1  \in [0, \ell],
  \end{array}\right.
\end{equation}
where
\begin{gather}\label{L}
  \Lambda(s, x_1,t) :=  v_1(s,x_1,t) + s \, \partial_s v_1(s, x_1,t),
  \\
    F(s,x_1,t) := - \widehat{\varphi}(s, x_1, t)  - s \, \partial_{x_1} v_1(s, x_1,t) . \label{L+}
\end{gather}

Taking into account the assumptions on smoothness of the coefficients from Section~\ref{Sec:Statement},  to satisfy the conditions of Theorem 3 \cite{Mysh-Fili_1981} it is necessary to additionally  impose the following condition:
\begin{equation}\label{add-cond-1}
  v_1(s,0,t) + s \, \partial_s v_1(s,0,t) > 0 \quad \text{for all} \ \ (s,t) \in \Bbb R \times [0,T].
\end{equation}

Let $\mathfrak{L}$ be a subset of the space $C([0,\ell]\times[0,T])$  such that each function from $\mathfrak{L}$  satisfies the Lipschitz condition. Then for any function $\Phi \in \mathfrak{L}$  there is a solution $\eta(t; y_0, t_0, \Phi)$ to
the Cauchy problem
\begin{equation}\label{Cauchy}
\frac{dy}{dt} = \Lambda(\Phi(y,t)), \quad y(t_0)= y_0 \quad \big(\,  (y_0,t_0) \in [0,\ell]\times[0,T]\, \big).
\end{equation}
In addition, the solution $\eta(t; y_0, t_0, \Phi)$
continues in the direction of decreasing $t$  and eventually reaches  the left or bottom side  of the rectangle $[0,\ell]\times[0,T].$
Let $\theta(y_0,t_0,\Phi)$ denote the smallest value of the argument~$t$ for which this solution is defined in the rectangle. Obviously,
$\theta(y_0,t_0,\Phi)\ge 0,$ $\theta(0,t_0,\Phi) =t_0,$  and if $\theta(y_0,t_0,\Phi)> 0,$ then $\eta(\theta(y_0,t_0,\Phi); y_0, t_0, \Phi) =0.$

For a continuously differentiable function $w_0 \colon  [0,\ell]\times[0,T] \mapsto \Bbb R,$ problem \eqref{limit_prob+Red} is equivalent to the integral equation
\begin{equation}\label{integral_equation}
w_0(x_1,t) = \int_{\theta(x_1,t,w_0)}^{t} F\big(w_0(\eta(\tau; x_1, t, w_0), \tau), \,  \eta(\tau; x_1, t, w_0), \, \tau\big) \, d\tau .
\end{equation}
Theorem 3 \cite{Mysh-Fili_1981} states that  there is a number $T_1 \in (0, T]$ such that
the integral equation \eqref{integral_equation} has  a unique continuous solution in $[0,\ell]\times[0,T_1]$ that satisfies the Lipschitz condition. This solution is called a generalized local solution to  problem  \eqref{limit_prob+Red}.

The interpretation of problem \eqref{probl} suggests that the solution is a concentration of a transported substance. Therefore, a condition is required to ensure that $w_0$  is nonnegative.
It is clear  from \eqref{integral_equation}  that if $F$ is nonnegative, then the generalized solution  is also nonnegative.
Therefore, from the beginning, we can look for a solution in the cone $\mathfrak{L}_{+} \subset \mathfrak{L}$ of  nonnegative functions,
and assume that
\begin{equation}\label{add-cond-2}
  \widehat{\varphi}(s, x_1, t) \le 0 \ \ \text{and} \ \  \partial_{x_1} v_1(s, x_1,t) \le 0\ \  \text{for all} \ \ (s, x_1, t) \in \Bbb R_+\times[0,\ell]\times[0,T].
\end{equation}

For this solution to be classical, as shown in \cite[\S 6, Remark 5]{Mysh-Fili_1981}, it is necessary that the functions $\Lambda$ and $F$ have continuous derivatives in $s$ and $x_1$,  these derivatives satisfy the local Lipschitz condition for the indicated variables,  and the second
matching condition holds for  problem~\eqref{limit_prob+Red}, i.e.  $F(0, 0, 0) = 0.$  Thanks to  ${\bf A2},$   ${\Cone}$ and
${\Cthree}$ $(\varphi|_{t=0}= 0),$ all these conditions are satisfied.

In addition, based on assumption $\varphi|_{t=0}= 0$ we deduce from \eqref{limit_prob+Red} that
 first $\partial_x  w_0|_{t=0} = 0$  and then  $\partial_t  w_0|_{t=0} = 0$ for all $x_1 \in [0, \ell]$.
Thus, the following statement holds.

\begin{proposition}\label{prop_1}
  Under assumptions ${\bf A2},$   ${\Cone},$ ${\Cthree},$  \eqref{add-cond-1}  and \eqref{add-cond-2},
  problem~\eqref{limit_prob}
possesses a unique classical solution in $[0,\ell]\times [0,T_1].$  Moreover, it is  nonnegative,
$\partial_t  w_0|_{t=0} = 0$ and $\partial_x  w_0|_{t=0} = 0$ for all $x_1 \in [0, \ell]$.
\end{proposition}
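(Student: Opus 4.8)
The plan is to deploy the characteristic machinery already assembled around \eqref{Cauchy}--\eqref{integral_equation} and reduce the assertion to Theorem~3 of \cite{Mysh-Fili_1981} applied to the Riemann-invariant form \eqref{limit_prob+Red}. First I would verify the hypotheses of that theorem for the pair $(\Lambda, F)$ defined in \eqref{L}--\eqref{L+}: their smoothness and local Lipschitz character in $(s,x_1)$ follow from ${\bf A2}$ (the bounds on $v_1$ and $s\,\partial_s v_1$) and ${\Cone}$ (smoothness of $\varphi$, hence of $\widehat{\varphi}$ through \eqref{hat_phi}), while the strict sign condition \eqref{add-cond-1} guarantees $\Lambda(s,0,t)>0$, so that the left base $x_1=0$ is an inflow boundary on which the Dirichlet datum $w_0(0,t)=0$ is admissible and the characteristics emanate into the rectangle. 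This produces a number $T_1\in(0,T]$ and a unique continuous, Lipschitz \emph{generalized} solution of \eqref{integral_equation} on $[0,\ell]\times[0,T_1]$.

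Next I would upgrade this generalized solution to a classical one by invoking \cite[\S 6, Remark~5]{Mysh-Fili_1981}. Its hypotheses demand that $\Lambda$ and $F$ possess continuous, locally Lipschitz derivatives in $s$ and $x_1$ --- again immediate from ${\bf A2}$ and ${\Cone}$ --- together with the first-order matching condition $F(0,0,0)=0$ for \eqref{limit_prob+Red}. The latter holds because, by \eqref{L+}, $F(0,0,0)=-\widehat{\varphi}(0,0,0)$, and $\widehat{\varphi}(\,\cdot\,,\,\cdot\,,0)\equiv 0$ since ${\Cthree}$ forces $\varphi|_{t=0}=0$ in \eqref{hat_phi}. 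Hence $w_0\in C^1([0,\ell]\times[0,T_1])$ and solves \eqref{limit_prob} classically.

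For nonnegativity I would run the Picard-type iteration that underlies \eqref{integral_equation} inside the closed cone $\mathfrak{L}_+\subset\mathfrak{L}$ of nonnegative functions. The key observation is that \eqref{add-cond-2} makes $F$ nonnegative on the relevant range: if the current iterate is nonnegative, then $\widehat{\varphi}(w_0,x_1,t)\le 0$ and $\partial_{x_1}v_1(w_0,x_1,t)\le 0$, so $F(w_0,x_1,t)\ge 0$, and the right-hand side of \eqref{integral_equation} --- an integral of $F$ along the characteristic $\eta$ --- is again nonnegative. Thus $\mathfrak{L}_+$ is invariant under the iteration and closed, so the unique fixed point $w_0$ lies in it.

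Finally, the initial derivative conditions follow by differentiating the data. Since $w_0(x_1,0)\equiv 0$, differentiation in $x_1$ gives $\partial_{x_1}w_0|_{t=0}=0$; substituting this into \eqref{limit_prob+Red} at $t=0$ yields $\partial_t w_0|_{t=0}=F(0,x_1,0)=-\widehat{\varphi}(0,x_1,0)=0$, once more by ${\Cthree}$. The bulk of the work is bookkeeping against \cite{Mysh-Fili_1981}; the genuinely delicate points I expect are the interplay between the inflow condition at $x_1=0$ and the corner compatibility enforced through \eqref{add-cond-1} and ${\Cthree}$, and checking that the cone-invariance argument is compatible with the contraction used to obtain \eqref{integral_equation}.
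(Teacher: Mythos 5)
Your proposal is correct and follows essentially the same route as the paper: reduction to the Riemann-invariant form \eqref{limit_prob+Red}, existence and uniqueness of a Lipschitz generalized solution of \eqref{integral_equation} via Theorem~3 of \cite{Mysh-Fili_1981} under \eqref{add-cond-1}, upgrading to a classical solution via \cite[\S 6, Remark~5]{Mysh-Fili_1981} using the matching condition $F(0,0,0)=0$ from ${\bf A5}$, nonnegativity from $F\ge 0$ on the cone $\mathfrak{L}_+$ guaranteed by \eqref{add-cond-2}, and the initial derivative identities read off from $w_0(\cdot,0)\equiv 0$ and the equation at $t=0$. No gaps; your cone-invariance phrasing of the nonnegativity step is just a slightly more explicit version of the paper's remark that the integral of a nonnegative $F$ along characteristics is nonnegative.
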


To determine  the  term $w_1,$  the existence of the continuous second derivative $\partial^2_{x_1 x_1} w_0$ is necessary (see~\eqref{fun_1}). Thus, the next step is to justify why this is the case.

The characteristics $x_1 = \eta_0(t) := \eta(t; 0, 0, w_0), \ t \in [0,T_1],$ divides the rectangle $[0, \ell]\times [0,T_1]$
into two regions
$$
\mathcal{R}_1 := \{(x_1, t)\colon  \ \eta_0(t) <  x_1 < \ell, \ \ t\in (0,T_1)\}  \ \  \text{and} \ \  \mathcal{R}_2:=
\{[0, \ell]\times [0,T_1]\}\setminus \overline{\mathcal{R}_1} .
$$
In $\mathcal{R}_1$,  the integral equation \eqref{integral_equation} is as follows
\begin{equation*}
w_0(x_1,t) = \int_{0}^{t} F\big(w_0(\eta(\tau; x_1, t, w_0), \tau), \,  \eta(\tau; x_1, t, w_0), \, \tau\big) \, d\tau ,
\end{equation*}
and it is equivalent to the Cauchy problem
\begin{equation}\label{limit_prob+Red+ext}
 \left\{\begin{array}{rcll}
\partial_t{w}_0(x_1,t)  +  \Lambda(w_0, x_1, t)\,  \partial_{x_1}{w}_0(x_1,t) & =& F({w}_0, x_1, t), & (x_1,t) \in \ \mathcal{R}_1,
    \\[2mm]
      w_0(x_1,  0) &=&  0,&  x_1  \in [0, \ell].
  \end{array}\right.
\end{equation}

The domain $\mathcal{R}_1$ is a domain of determinacy (see \cite{Friedrichs1948}), i.e., every point
within it can be accessed via a characteristic curve originating from the  interval $(0, \ell)$.
By  conditions ${\bf A2}$ and ${\Cone},$ the function $F$ vanishes for $x_1 \in [0, \delta_1].$ This means that $w_0$ vanishes in
$\{(x_1, t)\colon  \ \eta_0(t) \le  x_1 \le \eta_0(t) + \delta_1, \ \ t\in [0,T_1]\}.$
For the solution $w_0$ we take numbers
$$
C > \max_{(x_1,t)\in [0,\ell]\times[0,T_1]}|w_0(x_1,t)| \ \  \text{and} \ \  K >   \max_{(s,x_1,t)\in [0,C]\times[0,\ell]\times[0,T_1]}|\Lambda(s,x_1,t)|.
$$
Then, based on Theorem 7.2  in \cite{Friedrichs1948} and the remark before it, there exists $\alpha > 0$ depending on the constants
$C$, $K$ and constants bounded the second derivatives of $\Lambda$ and $F$ with respect $s$ and $x_1$ in  $[0, C]\times [0,\ell]\times[0,T_1]$ such that the  continuous derivative $\partial^2_{x_1 x_1} w_0$ exists in
$\{(x_1, t)\colon  \ \eta_0(t) \le  x_1 \le \ell, \ \ t\in [0,\alpha]\}.$
If we now apply the same reasoning, taking as the initial moment $t = \alpha$ and the initial condition $w_0|_{t=\alpha} = w_0(x_1, \alpha)$, and then repeating this process a finite number of times, we find that there is a second continuous  derivative $\partial^2_{x_1 x_1} w_0$ in
$\overline{\mathcal{R}_1}.$ In addition, it follows from \cite[Theorem 7.2]{Friedrichs1948} that $\psi := \partial^2_{x_1 x_1} w_0$  is a generalized solution to the problem
\begin{equation}\label{limit_prob+second_deriv}
 \left\{\begin{array}{rcl}
 \partial_t\psi(x_1,t)  +  \Lambda(w_0, x_1,t)\,  \partial_{x_1}\psi(x_1,t) & =& F_2(w_0, \partial_{x_1}w_0, \psi,  x_1, t) \ \ \text{in} \ \mathcal{R}_1,
    \\[2mm]
      \psi(x_1,  0) &=&  0,\quad  x_1  \in [0, \ell].
  \end{array}\right.
\end{equation}
 The differential equation in \eqref{limit_prob+second_deriv} is obtained by formally differentiating  the differential equation
in~\eqref{limit_prob+Red+ext} twice.

  In $\mathcal{R}_2$, the integral equation \eqref{integral_equation} can be rewritten in the subsequent form
\begin{equation*}
w_0(x_1,t) = \int_{0}^{x_1} \frac{F\big(w_0(y, \eta^{-1}(y; t, x_1,w_0)), \,  y, \, \eta^{-1}(y; t, x_1, w_0)\big)}
{\Lambda\big(w_0(y, \eta^{-1}(y; t, x_1,w_0)), \,  y, \, \eta^{-1}(y; t, x_1, w_0)\big)} \, dy ,
\end{equation*}
and it is equivalent to the Cauchy problem
\begin{equation}\label{limit_prob+R_2}
 \left\{\begin{array}{rcll}
\partial_{x_1}{w}_0(x_1,t) + \frac{1}{ \Lambda(w_0,x_1,t)}\partial_t{w}_0(x_1,t)    & =& \frac{F({w}_0, x_1, t)}{ \Lambda(w_0,x_1,t)}, & (x_1,t) \in \ \mathcal{R}_2,
    \\[2mm]
      w_0(0, t) &=&  0,&  t \in [0, T_1].
  \end{array}\right.
\end{equation}

As in the previous case, we show that there is a second continuous  derivative  $\partial^2_{x_1 x_1} w_0$ in
$\overline{\mathcal{R}_2}$ if
\begin{equation}\label{new-cond}
  \partial_t\varphi\big|_{t=0} = \partial^2_{tt}\varphi\big|_{t=0} =0.
\end{equation}

Now it remains to ensure the continuity of this derivative on the characteristic $x_1 = \eta_0(t), \ t \in [0,T_1].$
For this, it is necessary and sufficient that the corresponding matching condition for this characteristic be satisfied. In our case this is the equation
$$
0 = \partial_{x_1}F(0, 0, 0)   \ \Longleftrightarrow   \  \partial_{x_1}\widehat{\varphi}(0, 0, 0) = 0,
$$
which is performed thanks to  ${\Cone}$ and ${\bf A2}.$

Similarly, we show that there is a continuous derivative  $\partial^2_{t t } w_0$ in $[0, \ell]\times [0, T_1]$ if $\partial_{t}\widehat{\varphi}(0, 0, 0) = 0.$ In addition,  $\partial^2_{x_1 x_1 } w_0|_{t=0} = 0$  and $\partial^2_{t t } w_0|_{t=0} = 0.$

\begin{proposition}\label{prop_2}
Under the conditions of Proposition~\ref{prop_1} and \eqref{new-cond}, the solution $w_0$ to  problem \eqref{limit_prob} has continuous second derivatives
in $[0, \ell]\times [0, T_1],$ and $\partial^2_{x_1 x_1 }w_0|_{t=0} = 0$ and $\partial^2_{t t }w_0|_{t=0} = 0.$
\end{proposition}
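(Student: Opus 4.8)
The plan is to bootstrap the $C^1$ solution $w_0$ furnished by Proposition~\ref{prop_1} to a $C^2$ solution by treating the two determinacy regions $\mathcal{R}_1$ and $\mathcal{R}_2$ separately with the method of characteristics, and then to reconcile the two one-sided second derivatives along the separating characteristic $x_1 = \eta_0(t)$. The regularity theory for first-order quasilinear equations in \cite{Friedrichs1948} (Theorem~7.2) is the natural tool, since it produces continuous second derivatives once the characteristic field and the source are $C^2$ with locally Lipschitz second derivatives; by \eqref{L}, \eqref{L+} and assumptions \textbf{A2}, \textbf{A3} the coefficients $\Lambda$ and $F$ enjoy exactly this smoothness.

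First I would work in $\mathcal{R}_1$, where the solution is governed by the Cauchy problem \eqref{limit_prob+Red+ext} with data on the initial segment. Applying Theorem~7.2 of \cite{Friedrichs1948} on a short initial strip $\{0 \le t \le \alpha\}$ produces a continuous $\partial^2_{x_1 x_1} w_0$ there; re-initializing at $t = \alpha$ and iterating finitely many times extends this to all of $\overline{\mathcal{R}_1}$. The payoff is that $\psi := \partial^2_{x_1 x_1} w_0$ is then the generalized solution of the \emph{linear} transport problem \eqref{limit_prob+second_deriv}, whose characteristic field coincides with that of the original equation. In $\mathcal{R}_2$ the characteristics enter through $x_1 = 0$, so I would instead use the reformulation \eqref{limit_prob+R_2}, in which $x_1$ is the evolution variable; this is legitimate wherever $\Lambda > 0$, which holds along the characteristics foliating $\mathcal{R}_2$ by \eqref{add-cond-1}, as is already implicit in the construction of Proposition~\ref{prop_1}, so that $1/\Lambda$ is smooth. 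The same Friedrichs iteration with $x_1$ in place of $t$ then yields a continuous $\partial^2_{x_1 x_1} w_0$ up to $\overline{\mathcal{R}_2}$, \emph{provided} the data are compatible at the corner $(0,0)$; because this compatibility is phrased through $t$-derivatives of the boundary datum, it is precisely the extra hypothesis \eqref{new-cond}, $\partial_t\varphi|_{t=0} = \partial^2_{tt}\varphi|_{t=0} = 0$, that is needed here.

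The hard part will be gluing the two half-results into a single $C^2$ function across the characteristic $x_1 = \eta_0(t)$. Because $w_0 \in C^1$, the field $\Lambda$ is continuous across $\eta_0$ and $\psi$ solves the \emph{same} linear equation \eqref{limit_prob+second_deriv} on both sides; consequently the value of $\psi$ along $\eta_0$ is obtained by transporting its value at the origin, and the jump of $\partial^2_{x_1 x_1} w_0$ across $\eta_0$ is governed entirely by the corner $(0,0)$. Matching therefore reduces to the single first-order corner condition $\partial_{x_1} F(0,0,0) = 0$, which by \eqref{L+} is equivalent to $\partial_{x_1}\widehat{\varphi}(0,0,0) = 0$; since $v_1$ is $x_1$-independent near $x_1 = 0$ and $\varphi$ vanishes for $x_1 \in [0,\delta_1]$, this holds by \textbf{A2} and \textbf{A3}. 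Continuity of $\partial^2_{t t} w_0$ across $\eta_0$ follows from the analogous condition $\partial_t\widehat{\varphi}(0,0,0) = 0$, again supplied by the assumptions together with $\varphi|_{t=0} = 0$.

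It remains to verify the boundary values at $t = 0$. Differentiating the initial condition $w_0(x_1,0) = 0$ twice in $x_1$ gives $\partial^2_{x_1 x_1} w_0|_{t=0} = 0$ immediately. For the time derivative I would differentiate the equation in \eqref{limit_prob} once in $t$ and evaluate at $t = 0$: using $w_0|_{t=0} = \partial_t w_0|_{t=0} = \partial_{x_1} w_0|_{t=0} = 0$ from Proposition~\ref{prop_1}, together with $\partial_{x_1}\partial_t w_0|_{t=0} = 0$ obtained by differentiating $\partial_t w_0|_{t=0} \equiv 0$ in $x_1$, every term collapses except the explicit $t$-derivative of $F$, which reduces to $-\partial_t\widehat{\varphi}$ evaluated at $s = 0$, $t = 0$ and vanishes by the first relation in \eqref{new-cond} (since $\widehat{\varphi}$ is the boundary average of $\varphi$). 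Hence $\partial^2_{t t} w_0|_{t=0} = 0$, completing the plan.
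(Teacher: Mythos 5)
Your proposal follows essentially the same route as the paper: the split into the determinacy regions $\mathcal{R}_1$ and $\mathcal{R}_2$ separated by the characteristic $x_1=\eta_0(t)$, the iterated application of Theorem~7.2 of \cite{Friedrichs1948} (with $t$ as evolution variable in $\mathcal{R}_1$ via \eqref{limit_prob+Red+ext} and $x_1$ in $\mathcal{R}_2$ via \eqref{limit_prob+R_2}, using \eqref{add-cond-1} and \eqref{new-cond}), and the corner matching condition $\partial_{x_1}F(0,0,0)=0 \Leftrightarrow \partial_{x_1}\widehat{\varphi}(0,0,0)=0$ across the dividing characteristic. Your final paragraph merely makes explicit the computation of $\partial^2_{tt}w_0|_{t=0}=0$ that the paper states without detail; the argument is correct.
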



\subsubsection{Determination of subsequent members}

Knowing $w_0$, we can uniquely determine $u_{1}$ as the unique solution
to the Neumann problem
\begin{equation}\label{u_1}
 \left\{\begin{array}{rcll}
 \Delta_{\bar{\xi}_1}u_1 & = & - \widehat{\varphi}\big(w_0, x_1, t\big) + w_0(x_1, t) \, \nabla_{\bar{\xi}_1} \boldsymbol{\cdot} \overline{V}(x_1, \bar{\xi}_1,t), & \overline{\xi}_1\in\Upsilon_1,
    \\[4pt]
  \partial_{\vec{\nu}_{\bar{\xi}_1}} u_1 & =&  w_{0}(x_1,t) \, \overline{V}(x_1, \bar{\xi}_1,t) \boldsymbol{\cdot} \vec{\nu}_{\xi_1}
  - \varphi(w_{0}(x_1,t), x_1, \bar{\xi}_1, t), & \bar{\xi}_1 \in \partial\Upsilon_1,
  \\[4pt]
 & & \langle u_1(x_1, \cdot,t) \rangle_{\Upsilon_1}  =  0. &
 \end{array}\right.
\end{equation}

As a result, all coefficients of the linear equation \eqref{lim_1} are determined. Equipping it with conditions
$w_1|_{x_1 =0} =0$ and $w_1|_{t =0} =0$, we get a linear mixed problem in the rectangle $(0, \ell) \times (0,T_1)$ to uniquely determine $w_1$. The existence of classical solutions to such linear mixed problems and their representations were justified in
\cite{Mel-Roh_AnalAppl-2024} (see also \cite[Appendix B]{Mel-Roh_JMAA-2024}).

The Neumann problem for  $u_2$  is as follows
\begin{equation}\label{u_2}
 \left\{\begin{array}{rcl}
 \Delta_{\bar{\xi}_1}u_2 & = & - \, \partial_s \widehat{\varphi}(w_0, x_1, t) \,  w_{1}
   -
\frac{1}{\upharpoonleft\varpi\upharpoonright_2} \int\limits_{\partial \Upsilon_1} \partial_s \varphi(w_0, x_1, \bar{\xi}_1, t)\,
u_{1}(x_1, \bar{\xi}_1, t) \, d\sigma_{\bar{\xi}_1}
   \\[3pt]
  & &+ \,\partial_t {u}_1 +  \partial_{x_1}\!\big(\big(v_1({w}_0,x_1,t) +   \partial_s v_1({w}_0,x_1,t)\, {w}_0 \big) {u}_1\big)
   \\[3pt]
  & &
     +\,  {w}_1 \, \nabla_{\bar{\xi}_1} \boldsymbol{\cdot} \overline{V} + \nabla_{\bar{\xi}_1} \boldsymbol{\cdot}( {u}_1 \, \overline{V}), \hspace{4cm}  \, \bar{\xi}_1\in\Upsilon_1,
     \\[4pt]
 \partial_{\vec{\nu}_{\bar{\xi}_1}} u_2 & =&
 ({w}_1 + {u}_1) \, \overline{V} \boldsymbol{\cdot} \vec{\nu}_{\xi_1}
  - ({w}_1 + {u}_1) \, \partial_s \varphi(w_{0}, x_1, \bar{\xi}_1, t), \ \  \hspace{1cm} \bar{\xi}_1 \in \partial\Upsilon_1,
  \\[4pt]
 & & \langle u_2(x_1, \cdot,t) \rangle_{\Upsilon_1}  =  0.
 \end{array}\right.
\end{equation}

\begin{remark}\label{Remark-3-2}
  Due to assumptions $\mathbf{A1},$ $\mathbf{A2}$ and ${\Cone}$ the coefficients $u_1$ and $u_2$ vanish if $x_1 \in [0, \delta_1]$.
  Therefore, \
$
\mathcal{U}_\varepsilon\big|_{x_1 = 0} = 0.
$

It is straightforward to verify that $u_1|_{t=0} = 0.$ As a consequence, considering $\partial^2_{x_1 x_1 }w_0|_{t=0} = 0$, we derive from
\eqref{lim_1} that $\partial_t w_1|_{t =0} =0.$

To show that  $u_2|_{t=0} = 0$, we need to establish that $\partial_t  u_1|_{t=0}=0$ and $\partial_{x_1}  u_1|_{t=0} = 0$ (see \eqref{u_2}). It follows from \eqref{u_1} that these equalities are fulfilled if
$\partial_t \varphi(0, x_1, \bar{\xi}_1, 0) = 0$ and $\partial_{x_1}\varphi(0, x_1, \bar{\xi}_1, 0) = 0$
(they hold due to the first relation in \eqref{new-cond} and the second relation in ${\Cthree}).$
 Hence,
$
\mathcal{U}_\varepsilon\big|_{t=0} = 0.
$
\end{remark}

\subsection{Boundary-layer ansatz}\label{BLPs}
The regular part \eqref{regul} of the asymptotics  satisfies the differential equation and the lateral boundary condition of problem \eqref{probl} up to terms of order $\mathcal{O}(\varepsilon^2).$
Unfortunately, $\mathcal{U}_\varepsilon$ does not satisfy the boundary condition on the right base $\Upsilon_\varepsilon(\ell)$ of the cylinder~$\Omega_\varepsilon$, leaving residuals of order $\mathcal{O}(1)$ there. To neutralize them and to satisfy the boundary condition on $\Upsilon_\varepsilon(\ell)$  in \eqref{probl}, we introduce the boundary-layer ansatz
\begin{equation}\label{prim+}
\mathcal{B}_\varepsilon(x,t)
 := \Pi_0\Big(\frac{\ell - x_1}{\varepsilon}, \frac{\bar{x}_1}{\varepsilon}, t\Big) + \varepsilon \, \Pi_1\Big(\frac{\ell - x_1}{\varepsilon}, \frac{\bar{x}_1}{\varepsilon}, t\Big)
    +
    \varepsilon^{2} \, \Pi_{2}\Big(\frac{\ell - x_1}{\varepsilon}, \frac{\bar{x}_1}{\varepsilon}, t\Big),
 \end{equation}
 which is located in a neighborhood of $\Upsilon_\varepsilon(\ell).$

Due to assumptions ${\bf A1},$ ${\bf A2}$ and ${\Cone}$ the differential equation and boundary condition on the lateral surface of the small part $\Omega_{\varepsilon, \ell -\delta_1}  :=\Omega_\varepsilon \cap \{x\colon x_1 \in (\ell -\delta_1, \ell)\}$ of the cylinder $\Omega_\varepsilon$ are as follows
$$
\partial_t u_\varepsilon  -  \varepsilon\, \Delta_{x} u_\varepsilon + \mathrm{v}_1(t) \, \partial_{x_1}\! u_\varepsilon = 0 \quad \text{and} \quad
-   \partial_{\overline{\nu}_\varepsilon} u_\varepsilon  =  \varphi_\varepsilon\big(x,t\big).
$$
Substituting  $\mathcal{B}_\varepsilon$  into problem \eqref{probl} in
this neighborhood, considering that $\mathcal{U}_\varepsilon$ almost satisfies the lateral boundary condition and collecting  coefficients at the same powers of $\varepsilon$, we get the  problems
\begin{equation}\label{prim+probl+0}
 \left\{\begin{array}{rcll}
    \Delta_\zeta \Pi_0(\zeta,t) +  \mathrm{v}_1(t) \, \partial_{\zeta_1}\Pi_0(\zeta,t)
  & =    & 0,
   & \quad \zeta\in \mathfrak{C}_+,
   \\[2mm]
  \partial_{\vec{\nu}_{\overline{\zeta}_1}} \Pi_0(\zeta,t) & =
   & 0,
   & \quad \zeta\in \partial\mathfrak{C}_+ \setminus \Upsilon_\ell,
   \\[2mm]
  \Pi_0(0, \overline{\zeta}_1,t) & =
   & \Phi_0(t),
   & \quad \overline{\zeta}_1\in\Upsilon_\ell,
   \\[2mm]
  \Pi_0(\zeta,t) & \to
   & 0,
   & \quad \zeta_1\to+\infty,
 \end{array}\right.
\end{equation}
\begin{equation}\label{prim+probl+1}
 \left\{\begin{array}{rcll}
    \Delta_\zeta \Pi_k(\zeta,t) +  \mathrm{v}_1(t) \, \partial_{\zeta_1}\Pi_k(\zeta,t)
  & =    & \partial_{t}\Pi_{k-1}(\zeta,t),
   &  \zeta\in \mathfrak{C}_+,
   \\[2mm]
  \partial_{\vec{\nu}_{\overline{\zeta}_1}} \Pi_k(\zeta,t) & =
   & 0,
   &  \zeta\in \partial\mathfrak{C}_+ \setminus \Upsilon_\ell,
   \\[2mm]
  \Pi_k(0,\overline{\zeta}_1,t) & =
   & \Phi_k(t,\overline{\zeta}_1) ,
   &  \overline{\zeta}_1\in\Upsilon_\ell,
   \\[2mm]
  \Pi_k(\zeta,t) & \to
   & 0,
   &  \zeta_1\to+\infty,\end{array}\right.
\end{equation}
for $k\in \{1, 2\}.$ Here, where $\zeta=(\zeta_1, \zeta_2, \zeta_3),$ $\zeta_1 = \frac{\ell -x_1}{\varepsilon},$ $\overline{\zeta}_1 =(\zeta_2, \zeta_3) = \frac{\overline{x}_1}{\varepsilon},$
\begin{gather*}
  \mathfrak{C}_+ :=\big\{\zeta \colon \  \overline{\zeta}_1\in \varpi, \quad \zeta_1\in(0,+\infty)\big\},
  \quad \Upsilon_\ell:=\big\{\zeta\colon  \zeta_1 = 0, \   \overline{\zeta}_1 \in \varpi\big\},
  \\
  \Phi_0(t) := q_{\ell}(t) - w_{0}(\ell,t),
\quad
\Phi_1(t,\overline{\zeta}_1) =  - w_{1}(\ell,t) - u_1(\ell,\overline{\zeta}_1,t), \quad \Phi_2(t,\overline{\zeta}_1) =  -u_2(\ell, \overline{\zeta}_1, t),
\end{gather*}
 the variable $t\in [0,T_1]$ is considered as a parameter in these problems.

\begin{proposition}
 There are solutions to problems \eqref{prim+probl+0} and \eqref{prim+probl+1} which decay exponentially to zero as $\zeta_1 \to +\infty$ uniformly in $t\in [0,T_1].$  In addition,
$\Pi_0|_{t=0} = \Pi_1|_{t=0} = 0,$ and if
\begin{equation}\label{therd_cond}
  q''_\ell(0) = 0
\end{equation}
holds, then  $\Pi_2|_{t=0} = 0.$
\end{proposition}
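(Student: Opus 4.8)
The plan is to construct each profile by separation of variables in the cross-section, using the orthonormal basis $\{\Theta_m\}_{m\in\Bbb N_0}$ of $L^2(\varpi)$ of Neumann eigenfunctions (as in Section~\ref{Sec:Existence}), with eigenvalues $0=\tau_0<\tau_1\le\tau_2\le\cdots$. Expanding $\Pi_k(\zeta,t)=\sum_{m\ge0}c^{(k)}_m(\zeta_1,t)\,\Theta_m(\overline{\zeta}_1)$ together with the data $\Phi_k$ and the right-hand sides in the same basis reduces \eqref{prim+probl+0} and \eqref{prim+probl+1} to families of second-order ODEs in $\zeta_1$ (with $t$ as a parameter),
\[
\partial^2_{\zeta_1\zeta_1}c^{(k)}_m+\mathrm{v}_1(t)\,\partial_{\zeta_1}c^{(k)}_m-\tau_m\,c^{(k)}_m=g^{(k)}_m(\zeta_1,t),\qquad c^{(k)}_m(0,t)=\psi^{(k)}_m(t),\quad c^{(k)}_m\to0.
\]
Their characteristic roots $\mu^{\pm}_m(t)=\tfrac12\big(-\mathrm{v}_1(t)\pm\sqrt{\mathrm{v}_1^2(t)+4\tau_m}\big)$ satisfy $\mu^-_m(t)\le-\mathrm{v}_1(t)\le-\varsigma_0<0$ for every $m$, so a decaying homogeneous solution $e^{\mu^-_m(t)\zeta_1}$ is available in each mode. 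The decisive point I would stress is that the convective term $\mathrm{v}_1\,\partial_{\zeta_1}$ with $\mathrm{v}_1\ge\varsigma_0>0$ renders even the zeroth mode exponentially decaying (its roots are $0$ and $-\mathrm{v}_1$); this is exactly what removes the usual flux-type solvability condition that would otherwise obstruct the purely diffusive Neumann problem. Since $\varsigma_0$ bounds all decay rates from below uniformly in $t$, each mode---and hence the assembled series---decays at a fixed positive rate on $[0,T_1]$; smoothness in $t$ and convergence of the series follow from the smoothness of $\partial\varpi$, elliptic regularity, and the rapid decay of the Fourier coefficients of the smooth data.

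For $\Pi_0$ the datum $\Phi_0(t)=q_\ell(t)-w_0(\ell,t)$ is independent of $\overline{\zeta}_1$, so only the mode $m=0$ is excited and one gets explicitly $\Pi_0(\zeta,t)=\Phi_0(t)\,e^{-\mathrm{v}_1(t)\zeta_1}$, which solves \eqref{prim+probl+0} and decays uniformly. For $k\in\{1,2\}$ I would argue inductively: the forcing $\partial_t\Pi_{k-1}$ inherits exponential decay from the previous step, so each modal forcing $g^{(k)}_m$ decays exponentially; a variation-of-parameters particular solution then also decays (where a modal forcing rate happens to coincide with the homogeneous rate $\mu^-_m$ one only picks up a harmless polynomial prefactor), while the coefficient of $e^{\mu^-_m\zeta_1}$ is fixed by the Dirichlet datum $\psi^{(k)}_m$. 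This yields exponentially decaying $\Pi_1$ and $\Pi_2$, uniformly in $t$.

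The vanishing at $t=0$ I would obtain from uniqueness of decaying solutions: freezing $t=0$ turns \eqref{prim+probl+0}, \eqref{prim+probl+1} into homogeneous elliptic problems whenever both datum and forcing vanish there. Indeed $\Phi_0(0)=q_\ell(0)-w_0(\ell,0)=0$ by \eqref{match_conditions} and the initial condition in \eqref{limit_prob}, giving $\Pi_0|_{t=0}=0$. Moreover $\Phi_0'(0)=q'_\ell(0)-\partial_t w_0(\ell,0)=0$ by \eqref{match_conditions} and Proposition~\ref{prop_1}, so $\partial_t\Pi_0|_{t=0}=0$; combined with $\Phi_1(0,\cdot)=-w_1(\ell,0)-u_1(\ell,\cdot,0)=0$ (Remark~\ref{Remark-3-2}), the $t=0$ problem for $\Pi_1$ is homogeneous and $\Pi_1|_{t=0}=0$.

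Finally, $\Pi_2|_{t=0}=0$ requires the forcing $\partial_t\Pi_1|_{t=0}$ to vanish. Differentiating the $k=1$ equation in $t$ and setting $t=0$, the terms carrying $\Pi_1|_{t=0}=0$ drop out, the boundary datum becomes $\partial_t\Phi_1(0,\cdot)=-\partial_t w_1(\ell,0)-\partial_t u_1(\ell,\cdot,0)=0$ (Remark~\ref{Remark-3-2}), and the forcing reduces to $\partial^2_{tt}\Pi_0|_{t=0}=\Phi_0''(0)\,e^{-\mathrm{v}_1(0)\zeta_1}$ with $\Phi_0''(0)=q_\ell''(0)-\partial^2_{tt}w_0(\ell,0)$; here condition \eqref{therd_cond} together with Proposition~\ref{prop_2} forces $\Phi_0''(0)=0$. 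Hence $\partial_t\Pi_1|_{t=0}$ solves a homogeneous problem and vanishes, and since $\Phi_2(0,\cdot)=-u_2(\ell,\cdot,0)=0$ (Remark~\ref{Remark-3-2}), the $t=0$ problem for $\Pi_2$ is homogeneous, so $\Pi_2|_{t=0}=0$. The genuine obstacle here is the construction in the first two paragraphs---producing honestly decaying profiles uniformly in $t$ and justifying convergence and regularity of the modal series---whereas the initial-time identities are bookkeeping resting on the uniqueness of the decaying solution.
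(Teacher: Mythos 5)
Your proposal is correct and follows essentially the same route as the paper: separation of variables in the cross-section reducing to one-dimensional ODEs in $\zeta_1$ whose decaying roots are bounded away from zero uniformly in $t$ by $\mathrm{v}_1\ge\varsigma_0$ and $\tau_1>0$ (the paper organizes this as the explicit zeroth-mode solutions plus the substitution $e^{-\mathrm{v}_1\zeta_1/2}\mathcal{Z}$ and a Neumann eigenfunction expansion, which yields the same modal exponents), including the resonant polynomial-prefactor case for the zeroth mode. The only cosmetic difference is that you derive the initial-time identities from uniqueness of the decaying solution of the frozen $t=0$ problems, whereas the paper reads them off the explicit representations; both rest on the same vanishing of $\Phi_0,\Phi_0',\Phi_0'',\Phi_1,\Phi_2$ and $\partial_t\Phi_1$ at $t=0$.
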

\begin{proof}
  {\bf 1.} Using  the  Fourier method, it is easy to find solutions to problem \eqref{prim+probl+0}, namely
\begin{equation}\label{Pi_0}
  \Pi_0(\zeta_1,t) = \Phi_0(t) \, e^{- \mathrm{v}_1(t) \, \zeta_1}.
\end{equation}
Since $\mathrm{v}_1(t) \ge \varsigma_0 > 0$ for all $t\in [0, T]$ (see ${\bf A2}$) and  $\Phi_0(t)$ are bounded with respect to $t\in [0,T_1],$
\begin{equation}\label{exp_decay}
  \Pi_0(\zeta_1,t) = \mathcal{O}\big(e^{- \varsigma_0 \zeta_1}\big) \quad \text{as} \quad \zeta_1 \to +\infty
\end{equation}
uniformly with respect to $t\in [0, T_1].$  Obviously, $\Pi_0|_{t=0} = \partial_t \Pi_0|_{t=0} =0$
(see  \eqref{match_conditions} and  the second claim of Proposition~\ref{prop_1})
 and  $\partial_t \Pi_0 = \mathcal{O}\big(e^{- \varsigma_0 \zeta_1}\big)$ as $\zeta_1 \to +\infty.$

\smallskip

{\bf 2.} We seek a solution to  problem \eqref{prim+probl+1} at $k=1$ as the sum of two functions  $\widehat{\Pi}_1$ and $\widetilde{\Pi}_1 .$ The first one depends only on $\zeta_1$ and is a solution to the problem
\begin{equation}\label{auto_eq_2+}
  \left\{
  \begin{array}{l}
  \partial^2_{\zeta_1 \zeta_1} \widehat{\Pi}_1 +  \mathrm{v}_1(t) \, \partial_{\zeta_1}\widehat{\Pi}_1  = \partial_t\Pi_0(\zeta_1, t), \quad  \zeta_1 \in (0, +\infty),
 \\[2pt]
  \Pi_1|_{\zeta_1=0} = \widehat{\Phi}_1(t):= - w_1(\ell,t).
  \end{array}
\right.
\end{equation}
Direct calculations give
\begin{equation}\label{hat-Pi_1}
\widehat{\Pi}_1 = \left(\widehat{\Phi}_1(t) +  \left(\frac{{\Phi}_0(t) \, \mathrm{v}^\prime_1(t)}{(\mathrm{v}_1(t))^2} - \frac{{\Phi}^\prime_0(t)}{\mathrm{v}_1(t)} \right) \zeta_1 + \frac{{\Phi}_0(t)\, \mathrm{v}^\prime_1(t)}{2 \mathrm{v}_1(t)} \, \zeta_1^2\right)   e^{- \mathrm{v}_1(t) \, \zeta_1}.
\end{equation}
This  representation shows that the asymptotic formula \eqref{exp_decay} also holds  for $\widehat{\Pi}_1$ and $\partial_t \widehat{\Pi}_1.$  Moreover, $\widehat{\Pi}_1|_{t=0} = 0$,  and $\partial_t \widehat{\Pi}_1|_{t=0}=0$ if
\begin{equation}\label{equa+}
  \widehat{\Phi}'_1(0)= - \partial_t w_1(\ell,t)|_{t=0} = 0 \quad \text{and} \quad \Phi''_0(0) = q''_\ell(0) - \partial^2_{tt} w_0(\ell,0) = 0.
\end{equation}
The first equality in \eqref{equa+} holds according to the second statement in Remark~\ref{Remark-3-2}, the second one is satisfied due to
\eqref{therd_cond} and Proposition~\ref{prop_2}.

The function $\widetilde{\Pi}_1 $ is a  solution to the problem
  \begin{equation*}
 \left\{\begin{array}{rcll}
    \Delta_\zeta \widetilde{\Pi}_1(\zeta,t) +  \mathrm{v}_1(t) \,  \partial_{\zeta_1}\widetilde{\Pi}_1(\zeta,t)
  & =    & 0,
   &  \zeta\in \mathfrak{C}_+,
   \\[2mm]
  \partial_{\vec{\nu}_{\overline{\zeta}_1}} \widetilde{\Pi}_1(\zeta,t) & =
   & 0,
   &  \zeta\in \partial\mathfrak{C}_+ \setminus \Upsilon_\ell,
   \\[2mm]
  \widetilde{\Pi}_1(0,\overline{\zeta}_1,t) & =
   & \widetilde{\Phi}_1(t,\overline{\zeta}_1) ,
   &  \overline{\zeta}_1\in\Upsilon_\ell,
   \\[2mm]
  \widetilde{\Pi}_1(\zeta,t) & \to
   & 0,
   &  \zeta_1\to+\infty,\end{array}\right.
\end{equation*}
where $\widetilde{\Phi}_1(t,\overline{\zeta}_1) =   - u_1(\ell, \overline{\zeta}_1,t).$
The standard substitution
\begin{equation}\label{sub_1}
  \widetilde{\Pi}_1 = e^{-\frac12 \, \mathrm{v}_1(t) \, \zeta_1} \, \mathcal{Z}
\end{equation}
reduces this problem to the following
\begin{equation}\label{prim+probl+1+}
 \left\{\begin{array}{rcll}
    \Delta_\zeta \mathcal{Z}(\zeta,t)  - \frac14 \mathrm{v}^2_1(t)\,  \mathcal{Z}(\zeta,t)
  & =    & 0,
   &  \zeta\in \mathfrak{C}_+,
   \\[2mm]
  \partial_{\vec{\nu}_{\overline{\zeta}_1}} \mathcal{Z}(\zeta,t) & =
   & 0,
   &  \zeta\in \partial\mathfrak{C}_+ \setminus \Upsilon_\ell,
   \\[2mm]
  \mathcal{Z}(0,\overline{\zeta}_1,t) & =
   & \widetilde{\Phi}_1(t,\overline{\zeta}_1) ,
   &  \overline{\zeta}_1\in\Upsilon_\ell,
   \\[2mm]
  \mathcal{Z}(\zeta,t) & \to
   & 0.
   &  \zeta_1\to+\infty,\end{array}\right.
\end{equation}
Using  the  Fourier method and recalling that $\langle u_1\rangle_{\Upsilon_1}  =  0$ (see \eqref{u_1}), the solution to problem \eqref{prim+probl+1+} can be represented as follows
\begin{equation}\label{view_solution}
\mathcal{Z}(\zeta,t)
 =   \sum\limits_{p=1}^{+\infty}a_{p}(t) \,  \Theta_p(\overline{\zeta}_1) \, X_p(\zeta_1,t),
\end{equation}
where
\begin{equation}\label{coeff-a-p}
  a_{p}(t)  =   \int_{\Upsilon_\ell} \widetilde{\Phi}_1(t,\overline{\zeta}_1)\,  \Theta_p(\overline{\zeta}_1) \, d\overline{\zeta}_1,
\end{equation}
 $\{ \Theta_p\}_{p\in\Bbb N}$ are orthonormal  in $L^2(\Upsilon_\ell)$  eigenfunctions of the Neumann spectral problem
\begin{equation*}
- \Delta_{\overline{\zeta}_1} \Theta_p =   \lambda_p \, \Theta_p \ \ \mbox{in} \ \Upsilon_\ell, \qquad
\partial_{\nu_{\overline{\zeta}_1}} \Theta_p =  0 \ \ \mbox{on} \ \partial\Upsilon_\ell,
\end{equation*}
which are orthogonal to the eigenfunction $\Theta_0 \equiv 1,$ and  $\{ X_p\}_{p\in\Bbb N}$ are solutions to the problems
\begin{equation}\label{X-solutions}
 \left\{\begin{array}{l}
    \partial^2_{\zeta_1 \zeta_1} X_p(\zeta_1,t) - \big(\frac14 \mathrm{v}^2_1(t) + \lambda_p\big) \,  X_p(\zeta_1,t) =  0,
   \quad  \zeta_1 \in (0, +\infty),
   \\[2mm]
  X_p(0,t)  = 1, \qquad  X_p(\zeta_1,t) \to 0 \ \ \text{as} \ \  \zeta_1\to+\infty,
\end{array}\right.
\end{equation}
$p\in\Bbb N,$ respectively. It is easy to calculate that
\begin{equation}\label{exp-X_p}
  X_p(\zeta_1,t) =  \exp\Big(- \zeta_1 \sqrt{\tfrac14 \mathrm{v}^2_1(t) + \lambda_p} \Big).
\end{equation}

Since  $ \lambda_p \ge \lambda_1 > 0,$ it follows from \eqref{sub_1} and \eqref{view_solution}  that
\begin{equation}\label{exp-Pi_1+}
  |\widetilde{\Pi}_1(\zeta_1, t)| \le   C_1 \, \exp\big(-\kappa_0 \zeta_1\big), \quad \text{where} \ \ \kappa_0 = \tfrac12 \, \varsigma_0 + \sqrt{\tfrac14 \varsigma^2_0 + \lambda_1}.
\end{equation}
From \eqref{coeff-a-p} and \eqref{u_1} it follows that $\max_{t\in [0,T_1]}|a_p(t)| \le C_2$ for any $p\in \Bbb N.$ Therefore, the constant $C_1$ in \eqref{exp-Pi_1+} does not depend on $t.$

 Since  $a_p(0)=0$ for all $p\in \Bbb N$ $(u_1(\ell, \overline{\zeta}_1,0) =0),$ $\widetilde{\Pi}_1|_{t=0} = 0.$
Differentiating equality \eqref{sub_1} with respect to the variable $t,$ we get the representation for $\partial_t \widetilde{\Pi}_1.$
Taking into account that $\mathcal{Z}|_{t=0}=0$ and $a'_p(0)=0$ for all $p\in \Bbb N$ (this is because $\partial_t u_1(\ell, \overline{\zeta}_1,t)|_{t=0} =0$ (see the third statement in Remark~\ref{Remark-3-2})), we conclude that $\partial_t \widetilde{\Pi}_1|_{t=0}=0.$

To demonstrate the uniform exponential decay of $\partial_t \widetilde{\Pi}_1$ as $\zeta_1 \to +\infty,$ it suffices to prove that
solutions to problems
\begin{equation}\label{Y-solutions}
 \left\{\begin{array}{l}
    \partial^2_{\zeta_1 \zeta_1} Y_p(\zeta_1,t) - \big(\frac14 \mathrm{v}^2_1(t) + \lambda_p\big) \,  Y_p(\zeta_1,t) =  f_p(\zeta_1,t),
   \quad  \zeta_1 \in (0, +\infty),
   \\[2mm]
  Y_p(0,t)  = 0, \qquad  Y_p(\zeta_1,t) \to 0 \ \ \text{as} \ \  \zeta_1\to+\infty,
\end{array}\right.
\end{equation}
$p\in\Bbb N,$ also decay exponentially uniformly with respect to $t\in[0, T_1].$
Here, $Y_p = \partial_t X_p$ and $f_p(\zeta_1,t) = \frac12 \mathrm{v}_1(t) \, \mathrm{v}^\prime_1(t)  \,  X_p(\zeta_1,t).$

Based on \eqref{exp-X_p},  the right-hand side $f_p$  of the differential equation in problem~\eqref{Y-solutions}  decreases exponentially to zero as $\zeta_1 \to +\infty$  uniformly over $t\in [0,T_1].$
For the solution  we derive the representation
\begin{equation*}
  Y_p(\zeta_1,t) = - X_p(\zeta_1,t)\int_{0}^{\zeta_1} \frac{1}{X^2_p(\mu,t)} \int_{\mu}^{+\infty} X_p(\theta,t) \, f_p(\theta,t)\, d\theta \, d\mu
\end{equation*}
from which follows directly  the uniform in $t\in [0,T]$ exponential decrease of $Y_p$ at infinity.

Thus, the statement of the lemma is valid  for the solution $\Pi_1$  to  problem \eqref{prim+probl+1} at $k=1$
and $\partial_t\Pi_1|_{t=0}=0$ if condition \eqref{therd_cond} is fulfilled.

In a similar way, we justify the existence and uniform exponential decay of the solution to problem  \eqref{prim+probl+1} at $k=2.$
Since $\partial_t\Pi_1|_{t=0}=0$ and  $u_2(\ell, \overline{\zeta}_1,0) =0$ (see the third claim in Remark~\ref{Remark-3-2}), we
have $\Pi_2|_{t=0}=0.$
\end{proof}

Thus, we have constructed the boundary-layer ansatz \eqref{prim+} which decays exponentially to zero as $\zeta_1 \to +\infty$ uniformly over $t\in [0,T_1].$ Moreover,
\begin{equation}\label{results-1}
  \mathcal{B}_\varepsilon(x,t)|_{t=0} = 0 \quad \text{and} \quad \mathcal{B}_\varepsilon(x,t)|_{x_1=\ell} + \mathcal{U}_\varepsilon(x,t)|_{x_1=\ell} = q_\ell(t).
\end{equation}

\section{Justification}\label{Sec:justification}

Now, using these ansatzes and  the smooth cut-off function
\begin{equation}\label{cut-off_functions}
\chi_\ell(x_1) =
    \left\{
    \begin{array}{ll}
        1, & \text{if} \ \ x_1 \ge \ell_3 -  \frac12 \delta_1,
    \\[4pt]
        0, & \text{if} \ \ x_1 \le \ell_3 - \delta_1,
    \end{array}
    \right.
\end{equation}
where  $\delta_1$ is defined in assumption ${\bf A2},$  we construct the approximation function
\begin{equation}\label{approx}
  \mathfrak{A}_\varepsilon(x,t) :=  \mathcal{U}_\varepsilon(x,t) + \chi_\ell(x_1) \, \mathcal{B}_\varepsilon(x,t), \quad (x,t) \in \Omega_\varepsilon^{T_1}.
\end{equation}

The summand $\chi_\ell(x_1) \, \mathcal{B}_\varepsilon$ localized in $\Omega_\varepsilon \cap \{x\colon x_1 \in (\ell -\delta_1, \ell)\}$
and
\begin{multline}\label{Res_3}
\partial_t\big(\chi_\ell \, \mathcal{B}_\varepsilon\big)  -  \varepsilon\, \Delta_{x}\big(\chi_\ell \, \mathcal{B}_\varepsilon\big) + \mathrm{v}_1(t) \, \partial_{x_1}\!\big(\chi_\ell \, \mathcal{B}_\varepsilon\big)
                     = \varepsilon^2\, \chi_\ell \, \partial_t \Pi_2
       \\
       - \varepsilon \,  \big(\chi_\ell\big)^{\prime\prime}  \sum\limits_{k=0}^{2} \varepsilon^{k} \, \Pi_k
       + 2  \big(\chi_\ell\big)'  \sum\limits_{k=0}^{2} \varepsilon^{k} \partial_{\zeta_1} \Pi_k
  +  \mathrm{v}_1 \,  \big(\chi_\ell\big)'  \sum\limits_{k=0}^{2} \varepsilon^{k} \Pi_k.
  \end{multline}
The supports of summands in the second line of this equation coincide with $\mathrm{supp}\big((\chi_\ell)'\big),$ where
the functions $\{\Pi_k, \, \partial_{\zeta_1} \Pi_k \}_{k=0}^{2}$ exponentially decay as $\varepsilon$ tends to zero. Therefore, the right-hand side of the differential equation \eqref{Res_3} has order $\mathcal{O}(\varepsilon^2)$ for $\varepsilon$ small enough.

Summing-up  calculations in Section~\ref{Sec:4 Formal analysis} (see Remark \ref{Rem_3-3}, the last statement in Remark~\ref{Remark-3-2} and \eqref{results-1})  and in this one, we get the statement.

\begin{lemma}\label{lemma-1} Assume that, in addition to the main assumptions made in Section~\ref{Sec:Statement},
assumptions \eqref{add-cond-1}, \eqref{add-cond-2}, \eqref{new-cond} and \eqref{therd_cond} are satisfied.
Then there is a positive number $\varepsilon_0$ such that for all $\varepsilon\in (0, \varepsilon_0)$  the difference between the approximation function~\eqref{approx} and the solution to  problem \eqref{probl} satisfies the following relations:
\begin{multline}\label{dif_1}
  \partial_t(\mathfrak{A}_\varepsilon - u_\varepsilon) -  \varepsilon\, \Delta_x \big(\mathfrak{A}_\varepsilon - u_\varepsilon\big) +
  \partial_{x_1}\!\Big(\mathfrak{A}_\varepsilon \, v_1(\mathfrak{A}_\varepsilon,x_1,t)\Big)
  - \partial_{x_1}\!\Big(u_\varepsilon \, v_1(u_\varepsilon,x_1,t)\Big)
  \\
   + \varepsilon\, \nabla_{\overline{x}_1} \boldsymbol{\cdot} \Big(\big(\mathfrak{A}_\varepsilon - u_\varepsilon\big)
 \, \overline{V}_\varepsilon\Big)
    =  \varepsilon^2\,  \mathcal{F}^{(0)}_\varepsilon \quad  \text{in} \ \ \Omega_{\varepsilon}^{T_1},
\end{multline}
\begin{equation}\label{bc_1}
 -    \partial_{\vec{\nu}_\varepsilon}(\mathfrak{A}_\varepsilon - u_\varepsilon) +  (\mathfrak{A}_\varepsilon - u_\varepsilon) \, \overline{V}_\varepsilon\boldsymbol{\cdot}\vec{\nu}_\varepsilon
 -   \varphi_\varepsilon\big(\mathfrak{A}_\varepsilon, x,t)  + \varphi_\varepsilon\big(u_\varepsilon, x,t)
  =  \varepsilon^2\,  \mathcal{F}^{(1)}_\varepsilon \quad  \text{on} \ \ \Gamma_{\varepsilon}^{T_1},
\end{equation}
\begin{equation}\label{bc_2}
 \mathfrak{A}_\varepsilon - u_\varepsilon = 0 \quad  \text{on} \  \ \Upsilon_{\varepsilon}^{T_1} (0) \cup
 \Upsilon_{\varepsilon}^{T_1} (\ell),
\qquad  (\mathfrak{A}_\varepsilon - u_\varepsilon)\big|_{t=0}  = 0 \quad \text{on} \ \ \Omega_{\varepsilon},
\end{equation}
where
\begin{equation}\label{Res_10}
  \sup\nolimits_{\Omega_{\varepsilon}^{T_1}} |\mathcal{F}^{(0)}_\varepsilon| +
  \sup\nolimits_{\Gamma_{\varepsilon}^{T_1}} |\mathcal{F}^{(1)}_\varepsilon| \le C_1.
\end{equation}
\end{lemma}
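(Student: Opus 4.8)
The plan is to exploit that $u_\varepsilon$ is an \emph{exact} solution of problem~\eqref{probl}: subtracting the differential equation satisfied by $u_\varepsilon$ from the left-hand side of~\eqref{dif_1} cancels every $u_\varepsilon$-contribution, so that $\varepsilon^2\mathcal{F}^{(0)}_\varepsilon$ is nothing but the residual obtained by inserting the approximation $\mathfrak{A}_\varepsilon$ into the original convection--diffusion operator; the same cancellation reduces~\eqref{bc_1} to the residual of $\mathfrak{A}_\varepsilon$ in the lateral boundary operator. The algebraic identities~\eqref{bc_2} hold by construction: $\mathfrak{A}_\varepsilon|_{x_1=0}=0$ since $\mathcal{U}_\varepsilon|_{x_1=0}=0$ (Remark~\ref{Remark-3-2}) and $\chi_\ell(0)=0$; $\mathfrak{A}_\varepsilon|_{x_1=\ell}=q_\ell$ by the second relation in~\eqref{results-1}; and $\mathfrak{A}_\varepsilon|_{t=0}=0$ by Remark~\ref{Remark-3-2} together with $\mathcal{B}_\varepsilon|_{t=0}=0$ from~\eqref{results-1}. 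Thus everything reduces to estimating the two residuals of $\mathfrak{A}_\varepsilon$.

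First I would treat the region where $\chi_\ell\equiv 0$, away from the right base. There $\mathfrak{A}_\varepsilon=\mathcal{U}_\varepsilon$, and by the very construction of the coefficients $w_0,w_1,u_1,u_2$ in Section~\ref{Sec:4 Formal analysis} --- chosen by equating powers of $\varepsilon$ up to order $\varepsilon^1$ in both the equation and the lateral condition --- the regular ansatz leaves residuals of order $\mathcal{O}(\varepsilon^2)$ only; this is precisely the content of Remark~\ref{Rem_3-3}. Concretely one Taylor-expands $v_1(\mathcal{U}_\varepsilon,\cdot)$ and $\varphi_\varepsilon(\mathcal{U}_\varepsilon,\cdot)$ about $w_0$, uses $\mathcal{U}_\varepsilon-w_0=\mathcal{O}(\varepsilon)$, and controls the remainders through~\eqref{v_1}, \eqref{phi_cond}, \eqref{phi_cond+1}; the resulting $\varepsilon^2$-coefficient is an explicit polynomial in $w_0,w_1,u_1,u_2$ and their derivatives, which are uniformly bounded by Propositions~\ref{prop_1}, \ref{prop_2} and elliptic regularity for the cell problems~\eqref{u_1}, \eqref{u_2}. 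This yields $\mathcal{F}^{(0)}_\varepsilon$ and $\mathcal{F}^{(1)}_\varepsilon$ bounded in this region with an $\varepsilon$-independent constant.

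It remains to analyse the boundary-layer region $x_1\in[\ell-\delta_1,\ell]$, where the real simplification occurs: by \textbf{A1}, \textbf{A2} and \textbf{A3} one has $\overline{V}_\varepsilon\equiv 0$, $v_1=\mathrm{v}_1(t)$ (independent of $s$ and $x_1$) and $\varphi_\varepsilon$ independent of $s$, so that \emph{both} the interior operator and the lateral operator act \emph{linearly} on $\mathfrak{A}_\varepsilon$ there. Consequently the residual of $\mathfrak{A}_\varepsilon$ splits additively into the residual of $\mathcal{U}_\varepsilon$ (again $\mathcal{O}(\varepsilon^2)$ as above) and the residual of $\chi_\ell\mathcal{B}_\varepsilon$, which has already been computed in~\eqref{Res_3}. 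Its right-hand side carries either an explicit factor $\varepsilon^2$ or a factor $(\chi_\ell)'$ or $(\chi_\ell)''$; the latter terms are supported where the stretched variable satisfies $\zeta_1\gtrsim\delta_1/\varepsilon$, so by the uniform exponential decay of $\{\Pi_k,\partial_{\zeta_1}\Pi_k\}_{k=0}^{2}$ established in Subsection~\ref{BLPs} they are bounded by $C\,e^{-c/\varepsilon}$ for some $c>0$. On the lateral surface the boundary-layer contribution vanishes identically: $\chi_\ell$ depends only on $x_1$ while $\vec{\nu}_\varepsilon$ lies in the $\overline{x}_1$-plane, so $\partial_{\vec{\nu}_\varepsilon}(\chi_\ell\mathcal{B}_\varepsilon)=\chi_\ell\,\tfrac1\varepsilon\,\partial_{\vec{\nu}_{\overline{\zeta}_1}}\mathcal{B}_\varepsilon=0$ because each $\Pi_k$ obeys the homogeneous Neumann condition in~\eqref{prim+probl+0}, \eqref{prim+probl+1}; hence the lateral residual reduces to that of $\mathcal{U}_\varepsilon$.

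Collecting the two regions, the factor $\varepsilon^2$ can be extracted uniformly and the bound~\eqref{Res_10} follows with a constant independent of $\varepsilon$. I expect the only genuinely delicate point to be this last uniformity: the cut-off derivatives $(\chi_\ell)',(\chi_\ell)''$ generate negative powers $\varepsilon^{-1}$ when acting on $\mathcal{B}_\varepsilon$ through $\partial_{x_1}=-\varepsilon^{-1}\partial_{\zeta_1}$, so one must check that these are always multiplied by the exponentially small tails of the $\Pi_k$, i.e. that $\varepsilon^{-k}e^{-c/\varepsilon}\to 0$, while simultaneously the $\varepsilon$-independent sup-bounds of all regular coefficients must hold up to the order of derivatives appearing in the $\varepsilon^2$-residual --- which is exactly what Propositions~\ref{prop_1}--\ref{prop_2} and the decay estimates of Subsection~\ref{BLPs} were arranged to guarantee.
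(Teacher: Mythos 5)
Your proposal is correct and follows essentially the same route as the paper, whose proof of this lemma is precisely the one-line summary "summing up the calculations of Section~4": the residuals of the regular ansatz are $\mathcal{O}(\varepsilon^2)$ by construction (Remark~\ref{Rem_3-3}), the boundary-layer residual is given by \eqref{Res_3} with the cut-off terms killed by the uniform exponential decay of the $\Pi_k$, and \eqref{bc_2} follows from Remark~\ref{Remark-3-2} and \eqref{results-1}. Your additional observations — that assumptions \textbf{A1}, \textbf{A2}, \Cone{} make both operators linear on the support of $\chi_\ell\mathcal{B}_\varepsilon$ so the residuals split additively, and that the lateral Neumann conditions of the $\Pi_k$ annihilate the boundary-layer contribution on $\Gamma_\varepsilon$ — are exactly the implicit ingredients of the paper's argument, just spelled out.
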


\begin{remark}
 In \eqref{Res_10} and  onwards, all constants in inequalities are independent of the solution $u_\varepsilon$ and  the parameter~$\varepsilon.$
\end{remark}

Denote by $\mathfrak{A}_{1,\varepsilon}$ the approximation function $\mathfrak{A}_\varepsilon$ without the terms $ \varepsilon^2 u_2$ (see \eqref{regul})  and  $\varepsilon^2 \Pi_2$ (see \eqref{prim+}).

\begin{theorem}\label{Th_1} Let assumptions of Lemma~\ref{lemma-1} be satisfied.
Then,  there are positive constants $\tilde{C}_0,$ $\tilde{C}_1$ and $\varepsilon_0$ such that for all $\varepsilon\in (0, \varepsilon_0)$
 \begin{equation}\label{max_1+}
  \max_{\overline{\Omega}_\varepsilon \times [0,T_1]} |\mathfrak{A}_{1,\varepsilon} - u_\varepsilon|  \le \tilde{C}_0 \, \varepsilon^{2},
\end{equation}
and
\begin{equation}\label{app_estimate}
\tfrac{1}{\sqrt{\upharpoonleft  \Omega_\varepsilon \upharpoonright_3}}\,{\|
\nabla_x \mathfrak{A}_{1,\varepsilon} - \nabla_x u_\varepsilon\|}_{L^2(\Omega_\varepsilon\times (0, T_1))} \le
 \tilde{C}_1\, \varepsilon.
\end{equation}
 \end{theorem}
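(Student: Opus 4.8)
The plan is to work throughout with the remainder $R_\varepsilon := \mathfrak{A}_\varepsilon - u_\varepsilon$, for which Lemma~\ref{lemma-1} already supplies a linearised problem: the interior residual is $\varepsilon^2\mathcal{F}^{(0)}_\varepsilon$, the lateral Robin residual is $\varepsilon^2\mathcal{F}^{(1)}_\varepsilon$ (both bounded by \eqref{Res_10}), and the data on the bases and at $t=0$ are homogeneous \eqref{bc_2}. Since $\mathfrak{A}_\varepsilon-\mathfrak{A}_{1,\varepsilon}=\varepsilon^2 u_2+\varepsilon^2\chi_\ell\Pi_2$, a direct estimate (the operator $\nabla_{\overline{x}_1}$ produces a factor $\varepsilon^{-1}$, and $\Pi_2$ decays exponentially in the layer) gives $\|\mathfrak{A}_\varepsilon-\mathfrak{A}_{1,\varepsilon}\|_{L^\infty}=O(\varepsilon^2)$ and $\|\nabla_x(\mathfrak{A}_\varepsilon-\mathfrak{A}_{1,\varepsilon})\|_{L^2(\Omega_\varepsilon^{T_1})}=O(\varepsilon^2)$; hence, via the triangle inequality and $\sqrt{\upharpoonleft\!\Omega_\varepsilon\!\upharpoonright_3}\sim\varepsilon$, both \eqref{max_1+} and \eqref{app_estimate} reduce to proving $\|R_\varepsilon\|_{L^\infty}\le C\varepsilon^2$ and $\|\nabla_x R_\varepsilon\|_{L^2(\Omega_\varepsilon^{T_1})}\le C\varepsilon^2$. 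The principal difficulty is that diffusion is $O(\varepsilon)$ while convection is $O(1)$: a naive energy test of \eqref{dif_1} forces one to absorb $C_0\|R_\varepsilon\|\,\|\partial_{x_1}R_\varepsilon\|$ into $\varepsilon\|\nabla_x R_\varepsilon\|^2$, leaving a coefficient $\tfrac{C}{\varepsilon}\|R_\varepsilon\|^2$ that, through Gronwall's lemma, yields the useless factor $\exp(CT_1/\varepsilon)$. I would therefore establish the $L^\infty$ bound first and only then the energy bound.

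For the pointwise estimate I would use the comparison principle for the uniformly parabolic quasilinear problem \eqref{probl} with barriers $\mathfrak{A}_\varepsilon\pm\Phi_\varepsilon$, where $\Phi_\varepsilon(x,t):=\big(A\varepsilon^2+D\varepsilon^3 Z(\overline{x}_1/\varepsilon)\big)e^{\gamma t}$. Writing $\mathcal{N}$ for the parabolic operator in \eqref{probl}, Lemma~\ref{lemma-1} says $\mathcal{N}[\mathfrak{A}_\varepsilon]=\varepsilon^2\mathcal{F}^{(0)}_\varepsilon$, so the supersolution property amounts to $\partial_t\Phi_\varepsilon-\varepsilon\Delta_x\Phi_\varepsilon+\partial_{x_1}\!\big(\Lambda(\eta_*)\Phi_\varepsilon\big)+\varepsilon\nabla_{\overline{x}_1}\!\cdot(\Phi_\varepsilon\overline{V}_\varepsilon)\ge\varepsilon^2|\mathcal{F}^{(0)}_\varepsilon|$, the intermediate value $\eta_*$ lying between the \emph{smooth} functions $\mathfrak{A}_\varepsilon$ and $\mathfrak{A}_\varepsilon+\Phi_\varepsilon$. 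The crucial structural observation is that the linearised speed $\Lambda=v_1+s\,\partial_s v_1$ (see \eqref{L}) has a uniformly bounded $x_1$-derivative along $\eta_*$: in the outflow layer $[\ell-\delta_1,\ell]$ assumption {\bf A2} makes $v_1=\mathrm{v}_1(t)$ independent of $s$ and $x_1$, so $\Lambda\equiv\mathrm{v}_1(t)$ and $\partial_{x_1}\Lambda(\eta_*)=0$ exactly where $\partial_{x_1}\mathfrak{A}_\varepsilon=O(\varepsilon^{-1})$, whereas outside the layer $\mathfrak{A}_\varepsilon$ carries no boundary layer and its first derivatives are $O(1)$. Consequently the zeroth-order coefficient is bounded uniformly in $\varepsilon$; taking $\gamma$ larger than it makes the constant-in-space part $A\varepsilon^2e^{\gamma t}$ a supersolution in the interior, and the transversal corrector modelled on the cell function $Y$ of \eqref{identity} (with $\partial_\nu Y=1$) supplies an $O(\varepsilon^2)$ conormal derivative on $\Gamma_\varepsilon$, which can be signed to dominate both $\varepsilon^2\mathcal{F}^{(1)}_\varepsilon$ and the Robin term $(\overline{V}_\varepsilon\cdot\vec\nu_\varepsilon-\partial_s\varphi_\varepsilon)\Phi_\varepsilon$, while contributing only $O(\varepsilon^2)$ in the interior. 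Choosing $D$ for the boundary and then $\gamma$ large for the interior, the comparison principle gives $-\Phi_\varepsilon\le R_\varepsilon\le\Phi_\varepsilon$, i.e. $\|R_\varepsilon\|_{L^\infty}\le C\varepsilon^2$.

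With the $L^\infty$ bound in hand the energy estimate becomes routine. Testing \eqref{dif_1} with $R_\varepsilon$ and integrating over $\Omega_\varepsilon\times(0,\tau)$, the base terms vanish by \eqref{bc_2}; integrating the longitudinal flux difference by parts and using $|\mathfrak{A}_\varepsilon v_1(\mathfrak{A}_\varepsilon)-u_\varepsilon v_1(u_\varepsilon)|=|\Lambda(\eta)R_\varepsilon|\le C_0|R_\varepsilon|$ (with $|\Lambda|\le C_0$ by \eqref{v_1}) bounds it by $\tfrac{\varepsilon}{2}\|\partial_{x_1}R_\varepsilon\|^2+\tfrac{C}{\varepsilon}\|R_\varepsilon\|^2_{L^2(\Omega_\varepsilon)}$, the first part absorbed into $\varepsilon\|\nabla_x R_\varepsilon\|^2$ and the second now \emph{harmless}, since $\|R_\varepsilon(\cdot,t)\|_{L^2(\Omega_\varepsilon)}^2\le\|R_\varepsilon\|_{L^\infty}^2\,\upharpoonleft\!\Omega_\varepsilon\!\upharpoonright_3\le C\varepsilon^6$, so $\tfrac{C}{\varepsilon}\|R_\varepsilon\|^2\le C\varepsilon^5$ acts as a forcing rather than a Gronwall multiplier. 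The lateral contribution, obtained by combining the diffusive and transversal-convective boundary integrals through \eqref{bc_1}, using $|\varphi_\varepsilon(\mathfrak{A}_\varepsilon)-\varphi_\varepsilon(u_\varepsilon)|\le C_4|R_\varepsilon|$ with the trace inequality \eqref{ineq1} and the $L^\infty$ bound, is $O(\varepsilon^6)$, and the interior forcing $\varepsilon^2\int\mathcal{F}^{(0)}_\varepsilon R_\varepsilon$ is $O(\varepsilon^6)$ by \eqref{Res_10}. Collecting terms yields $\|R_\varepsilon(\cdot,\tau)\|^2+\varepsilon\|\nabla_x R_\varepsilon\|^2_{L^2(\Omega_\varepsilon^\tau)}\le C\varepsilon^5$, whence $\|\nabla_x R_\varepsilon\|_{L^2(\Omega_\varepsilon^{T_1})}\le C\varepsilon^2$, and dividing by $\sqrt{\upharpoonleft\!\Omega_\varepsilon\!\upharpoonright_3}\sim\varepsilon$ gives the claimed $O(\varepsilon)$ bound. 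The main obstacle, and the place where assumption {\bf A2} (constancy of $v_1$ near the outflow) is indispensable, is the uniform $L^\infty$ estimate: it is precisely what turns the dangerous $\varepsilon^{-1}$ term in the energy identity into a negligible forcing and so averts the exponential-in-$\varepsilon^{-1}$ blow-up.
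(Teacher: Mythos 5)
Your proposal is correct and follows essentially the same route as the paper: first a pointwise $O(\varepsilon^2)$ bound on $\mathfrak{A}_\varepsilon-u_\varepsilon$ via a maximum-principle argument that relies on a transversal corrector with prescribed normal derivative to absorb the Robin residual and on assumption {\bf A2} (constancy of $v_1$ near the outflow) to keep the zeroth-order coefficient bounded despite the $O(\varepsilon^{-1})$ boundary-layer derivatives, and only then the energy estimate, in which the uniform bound converts the dangerous $\varepsilon^{-1}\|\mathfrak{R}_\varepsilon\|^2$ term into an $O(\varepsilon^5)$ forcing. The only (cosmetic) difference is packaging: you use an additive barrier $(A\varepsilon^2+D\varepsilon^3 Z)e^{\gamma t}$ and the comparison principle, whereas the paper multiplies the remainder by the weight $Z_\varepsilon e^{-\lambda t}$ and applies the maximum principle to the resulting linear problem; the two devices are interchangeable here.
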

\begin{proof}
  {\bf 1.}
Denote by  $\mathfrak{R}_\varepsilon := \mathfrak{A}_\varepsilon - u_\varepsilon.$
Thanks to assumptions ${\bf A2},$  the difference
$$
\partial_{x_1}\!\big(\mathfrak{A}_\varepsilon \, v_1(\mathfrak{A}_\varepsilon,x_1,t)\big) - \partial_{x_1}\!\big(u_\varepsilon \, v_1(u_\varepsilon,x_1,t)\big)
$$
in \eqref{dif_1} is equal to  $\mathrm{v}_1(t) \, \partial_{x_1}\! \mathfrak{R}_\varepsilon$
 in $\Omega_{\varepsilon, \ell -\delta_1}:=\Omega_\varepsilon \cap \{x\colon x_1 \in (\ell -\delta_1, \ell)\},$  and in the other part of the cylinder~$\Omega_\varepsilon,$ based on the mean value theorem, to
 \begin{equation}\label{difference+1}
   \Lambda'(\Theta_\varepsilon, x_1,t) \, \partial_{x_1}\mathcal{U}_\varepsilon \,  \mathfrak{R}_\varepsilon + \Lambda(u_\varepsilon,x_1,t)\, \partial_{x_1}\mathfrak{R}_\varepsilon,
 \end{equation}
 where the function $\Lambda$ is defined in \eqref{L}.  Taking into account the smoothness of the coefficients of the regular ansatz $\mathcal{U}_\varepsilon$ and inequalities \eqref{v_1}, we get
 \begin{equation}\label{est+1}
   \sup\nolimits_{\Omega_\varepsilon^{T_1}} | \Lambda'(\Theta_\varepsilon, x_1,t) \, \partial_{x_1}\mathcal{U}_\varepsilon|
  \le \widehat{C}_1.
 \end{equation}

 Considering \eqref{difference+1}, equation \eqref{dif_1} can be reduced with the substitution  $\Psi_\varepsilon = \mathfrak{R}_\varepsilon  \, e^{-\lambda t}$ to
\begin{multline}\label{eq_psi3}
  \partial_t\Psi_\varepsilon   -  \varepsilon\, \Delta\Psi_\varepsilon +
  \Lambda(u_\varepsilon,x_1,t)\, \partial_{x_1}\Psi_\varepsilon + \varepsilon \overline{V}_\varepsilon \cdot \nabla_{\overline{x}_1} \Psi_\varepsilon
  +
  \Big(  \Lambda'(\Theta_\varepsilon, x_1,t) \, \partial_{x_1}\mathcal{U}_\varepsilon + \nabla_{\bar{\xi}_1}\cdot \overline{V} +\lambda\Big) \Psi_\varepsilon
   \\
   =  \varepsilon^2 \, e^{-\lambda t}\,  \mathcal{F}^{(0)}_\varepsilon \quad  \text{in} \ \ \big(\Omega_{\varepsilon} \setminus
  \Omega_{\varepsilon, \ell -\delta_1}\big) \times (0, T_1),
\end{multline}
and to
 \begin{equation}\label{eq_psi2}
\partial_t \Psi_\varepsilon -  \varepsilon\, \Delta_x\Psi_\varepsilon +
   \mathrm{v}_1(t) \,\partial_{x_1}\Psi_\varepsilon  + \lambda \Psi_\varepsilon =  \varepsilon^2 \,  e^{-\lambda t}\, \mathcal{F}^{(0)}_{\varepsilon}  \quad     \text{in} \ \  \Omega_{\varepsilon, \ell -\delta_1}^{T_1}.
\end{equation}
Here  the   constant $\lambda$ will be chosen below and  $\overline{V}(x_1, \overline{\xi}_1, t)$ is defined in \eqref{overline_V}.

 The boundary condition  \eqref{bc_1} for $\Psi_\varepsilon$ can be rewritten as follows
\begin{equation}\label{eq_4}
\partial_{\vec{\nu}_\varepsilon}\Psi_\varepsilon
  = - \varepsilon^2\,   e^{-\lambda t}\, \mathcal{F}^{(1)}_\varepsilon \quad  \text{on} \ \ \big(\Gamma_{\varepsilon}\cap \{x\colon x_1 \in (\ell -\delta_1, \ell)\}\big) \times  (0,T_1)
\end{equation}
because of assumption ${\Cone},$ \ and
\begin{equation*}
   \partial_{\vec{\nu}_\varepsilon}\Psi_\varepsilon +  \Psi_\varepsilon \, \big( \partial_s \varphi_\varepsilon(\theta,x,t) - \overline{V}_\varepsilon\boldsymbol{\cdot}\vec{\nu}_\varepsilon\big)
  =  - \varepsilon^2\,   e^{-\lambda t}\, \mathcal{F}^{(1)}_\varepsilon
\end{equation*}
on $\big(\Gamma_{\varepsilon}\cap \{x\colon x_1 \in (0, \ell -\delta_1)\}\big) \times  (0,T_1).$
\smallskip

{\bf 2.}
Further, without loss of generality, we can assume that the domain $\varpi$ is a disk $\{\overline{x}_1\colon x_2^2 + x_3^2 < r_0^2\}.$
Consider the function
\begin{equation*}
  Z_\varepsilon(\overline{x}_1) = 1 + \varepsilon\, \frac{\varrho \, r_0}{2 } \left( 1 - \Big(\frac{x_2}{\varepsilon r_0}\Big)^2 - \Big(\frac{x_3}{\varepsilon r_0}\Big)^2\right), \quad x \in \overline{\Omega}_\varepsilon,
\end{equation*}
where  the constant
\begin{equation}\label{m_1}
 \varrho  := 1 +  \max_{\mathcal{X}} \big|\partial_s \varphi^{(i)}(s,x,t)\big| + \max_{\overline{\Omega}_1\times [0,T]}\big|\overline{V}\big(x, t\big)\big|.
\end{equation}
Here $\mathcal{X}$ is the domain of the function $\varphi$ (see ${\Cone})$ and $\overline{\Omega}_1\times [0,T]$ is the domain of $\overline{V}$ (see ${\bf A1}).$ In general case, we should take a function
$$
Z_\varepsilon(\overline{x}_1) = 1 + \varepsilon\, h\big(\tfrac{\overline{x}_1}{\varepsilon}\big),
$$
where $h$ is a function from $C^2(\overline{\varpi})$ such that $h|_{\partial\varpi}=0$ and $\partial_{\overline{\nu}} h|_{\partial\varpi}= - \varrho.$

It is easy to verify that
\begin{equation}\label{z_1}
  Z_\varepsilon\big|_{\Gamma_\varepsilon} =1, \qquad - \partial_{\vec{\nu}_\varepsilon} Z_\varepsilon\big|_{\Gamma_\varepsilon} = \varrho , \qquad
  1 < Z_\varepsilon \le 1  +  \tfrac{1}{2 }\, \varepsilon \, \varrho  \, r_0 \quad \text{in} \ \ \Omega_\varepsilon,
  \end{equation}
  \begin{equation}\label{z_2}
   |\nabla_{\overline{x}_1 } Z_\varepsilon|^2  = \frac{\varrho^2 \, |\overline{x}_1|^2}{\varepsilon^2 r_0^2} \le \varrho^2
   \quad \text{and} \quad
  \Delta_{\overline{x}_1 } Z_\varepsilon  =  -\varepsilon^{-1} \frac{2 \varrho}{r_0} \quad \text{in} \ \ \Omega_\varepsilon .
  \end{equation}

Now we introduce a new function $\mathcal{K}_\varepsilon = \Psi_\varepsilon \, Z_\varepsilon.$ Direct calculations show that
the function $\mathcal{K}_\varepsilon$ satisfies the differential equation
\begin{equation}\label{eq_k+1}
\partial_t \mathcal{K}_\varepsilon -  \varepsilon\, \Delta_x\mathcal{K}_\varepsilon  + \frac{2 \varepsilon}{Z_\varepsilon}\, \nabla_{\overline{x}_1} Z_\varepsilon \cdot \nabla_{\overline{x}_1}\mathcal{K}_\varepsilon +
   \mathrm{v}_1(t) \,\partial_{x_1}\mathcal{K}_\varepsilon
     +
   \Big(-\frac{2 \varrho}{r_0\, Z_\varepsilon} - \frac{2 \varepsilon\,|\nabla_{\overline{x}_1 } Z_\varepsilon|^2}{Z_\varepsilon^2} +\lambda\Big) \mathcal{K}_\varepsilon
    =  \varepsilon^2 \,  e^{-\lambda t} \, Z_\varepsilon \, \mathcal{F}^{(0)}_{\varepsilon}
\end{equation}
 in \  $\Omega_{\varepsilon, \ell -\delta_1}^{T_1},$
and
\begin{multline}\label{eq_k+2}
  \partial_t\mathcal{K}_\varepsilon   -  \varepsilon\, \Delta_x\mathcal{K}_\varepsilon + \Big( \varepsilon \overline{V}_\varepsilon + \frac{2 \varepsilon}{Z_\varepsilon}\, \nabla_{\overline{x}_1} Z_\varepsilon\Big) \cdot \nabla_{\overline{x}_1}\mathcal{K}_\varepsilon+
  \Lambda(u_\varepsilon,x_1,t)\, \partial_{x_1}\mathcal{K}_\varepsilon
  \\ +
  \Big(-\frac{2 \varrho}{r_0\, Z_\varepsilon} - \frac{2 \varepsilon\,|\nabla_{\overline{x}_1 } Z_\varepsilon|^2}{Z_\varepsilon^2} - \frac{ \varepsilon \, \overline{V}_\varepsilon \cdot \nabla_{\overline{x}_1} Z_\varepsilon}{Z_\varepsilon}
  +
  \Lambda'(\Theta_\varepsilon, x_1,t) \, \partial_{x_1}\mathcal{U}_\varepsilon + \nabla_{\bar{\xi}_1}\cdot \overline{V} +\lambda\Big) \mathcal{K}_\varepsilon
  \\
  =   \varepsilon^2   e^{-\lambda t} Z_\varepsilon \,  \mathcal{F}^{(0)}_\varepsilon \quad \text{in} \ \
  \big(\Omega_{\varepsilon} \setminus\Omega_{\varepsilon, \ell -\delta_1}\big) \times (0, T_1),
\end{multline}
and, by virtue of the first two equalities in \eqref{z_1}, the boundary condition
\begin{equation}\label{eq_4+k}
 \partial_{\vec{\nu}_\varepsilon}\mathcal{K}_\varepsilon + \varrho\, \mathcal{K}_\varepsilon
  = - \varepsilon^2\,   e^{-\lambda t}\, \mathcal{F}^{(1)}_\varepsilon \quad  \text{on} \ \ \Gamma_{\varepsilon}\cap \{x\colon x_1 \in (\ell -\delta_1, \ell)\},
\end{equation}
and
\begin{equation}\label{eq_5+k}
   \partial_{\vec{\nu}_\varepsilon}\mathcal{K}_\varepsilon +  \mathcal{K}_\varepsilon \, \big( \varrho +  \partial_s \varphi_\varepsilon(\theta,x,t) - \overline{V}_\varepsilon\boldsymbol{\cdot}\vec{\nu}_\varepsilon\big)
  =   - \varepsilon^2\,   e^{-\lambda t}\, \mathcal{F}^{(1)}_\varepsilon
\end{equation}
on $\big(\Gamma_{\varepsilon}\cap \{x\colon x_1 \in (0, \ell -\delta_1)\}\big)\times (0,T_1].$

Thanks to our  choice  of the constant $\varrho$ (see \eqref{m_1}),  in \eqref{eq_5+k} we have
\begin{equation}\label{eq_6++}
 \varrho + \partial_s \varphi_\varepsilon(\theta,x,t)  - \overline{V} \boldsymbol{\cdot}\overline{\nu}_\varepsilon  \ge 1.
\end{equation}

Now we choose  the   constant
\begin{equation}\label{e_7}
   \lambda :=  1+
    \frac{2 \varrho}{r_0} + 2 \, \varrho^2 + \varrho \max_{\overline{\Omega}_1\times [0,T]}\big|\overline{V}\big(x, t\big)\big| \notag
    + \widehat{C}_1 + \max_{(x_1,\overline{\xi}_1, t) \in[0,\ell]\times \Upsilon_1 \times [0,T_1]} \big| \nabla_{\overline{\xi}_1}\cdot \overline{V}(x_1, \overline{\xi}_1, t)\big|,
\end{equation}
 where $\widehat{C}_1$ is from inequality \eqref{est+1}. Hence,  the coefficient at $\mathcal{K}_\varepsilon$ both  in \eqref{eq_k+1} and in \eqref{eq_k+2}  is bounded from  below by $1.$

\smallskip

{\bf 3.}
First we suppose that  the positive maximum of  $\mathcal{K}_\varepsilon$ is  reached at a point $P_0 = (x^0, t_0)\in \Omega_\varepsilon \times (0, T_1].$
 Then the following relations
\begin{equation*}
  \partial_t \mathcal{K}_\varepsilon \ge 0, \qquad \nabla_x\mathcal{K}_\varepsilon = \vec{0}, \qquad - \Delta_x \mathcal{K}_\varepsilon \ge 0
\end{equation*}
are satisfied at $P_0.$ Therefore, it follows from \eqref{eq_k+1} and  \eqref{eq_k+2}  that
\begin{equation*}
   \mathcal{K}_\varepsilon\big|_{P_0}  \le   \varepsilon^{2} \,  C_2 \, \max_{\overline{\Omega}_\varepsilon \times [0,T_1]}|\mathcal{F}^{(0)}_\varepsilon|.
\end{equation*}

Similarly, the case where $\mathcal{K}_\varepsilon$ takes the smallest negative value at a point from $\Omega_\varepsilon \times (0, T_1]$ is considered. As a result,  we obtain
 \begin{equation}\label{e_8}
    \max_{\overline{\Omega}_\varepsilon \times [0,T_1]} |\mathcal{K}_\varepsilon|  \le   \varepsilon^2\, C_2 \,
    \max_{\overline{\Omega}_\varepsilon \times [0,T_1]}|\mathcal{F}^{(0)}_\varepsilon|.
 \end{equation}

Now consider the case when the function $\mathcal{K}_\varepsilon$ takes the largest positive value at a  point $P_1$ belonging to $\Gamma_\varepsilon \times (0, T_1].$ Then $\partial_{\boldsymbol{\nu}_\varepsilon}\mathcal{K}_\varepsilon\big|_{P_1}  \ge 0$
 and  from \eqref{eq_4+k} --  \eqref{eq_6++} we get
$$
 \mathcal{K}_\varepsilon(P_1) \le -  \varepsilon^{2} \big( e^{-\lambda t}\,  \mathcal{F}^{(1)}_\varepsilon\big)\big|_{P_1} \ \ \Longrightarrow \ \
 \mathcal{K}_\varepsilon(P_1) \le   \varepsilon^{2}  \max_{\overline{\Gamma}_\varepsilon \times [0,T_1]} |\mathcal{F}^{(1)}_\varepsilon|.
$$
In the case when  $\mathcal{K}_\varepsilon$ reaches its smallest negative value  at a point
$P_2 \in  \Gamma_\varepsilon\times (0, T_1)$ we should consider
the function  $- \mathcal{K}_\varepsilon$ and repeat the previous argumentations.

Thus,
\begin{equation}\label{est-res}
    \max_{\overline{\Omega}_\varepsilon \times [0,T_1]}|\mathcal{K}_\varepsilon|  \le   \varepsilon^2\, C_2 \,
    \max_{\overline{\Omega}_\varepsilon \times [0,T_1]}|\mathcal{F}^{(0)}_\varepsilon| +  \varepsilon^{2} \max_{\overline{\Gamma}_\varepsilon \times [0,T_1]} |\mathcal{F}^{(1)}_\varepsilon|  \stackrel{\eqref{Res_10}}{\le } \varepsilon^2\, C_3.
\end{equation}

Now returning to the function $\mathfrak{R}_\varepsilon$ and using \eqref{est-res}, we have
\begin{equation}\label{max_1}
   \max_{\overline{\Omega}_\varepsilon \times [0,T_1]} |\mathfrak{R}_\varepsilon|   =
  \max_{\overline{\Omega}_\varepsilon \times [0,T_1]} |e^{\lambda t} \, \tfrac{1}{Z_\varepsilon}\mathcal{K}_\varepsilon| \le \varepsilon^2\, C_3 \, e^{\lambda T}.
\end{equation}
It follows from \eqref{max_1}  that terms of order $\mathcal{O}(\varepsilon^2)$ are redundant in this estimate, and as a result
we get
\begin{multline*}
   \max_{\overline{\Omega}_\varepsilon \times [0,T_1]} |\mathfrak{A}_{1,\varepsilon} - u_\varepsilon|  \le
   \max_{\overline{\Omega}_\varepsilon \times [0,T_1]} |\mathfrak{R}_\varepsilon| +
   \max_{\overline{\Omega}_\varepsilon \times [0,T_1]} |\varepsilon^2 u_2|
      + \max_{\overline{\Omega}_\varepsilon \times [0,T_1]} |\varepsilon^2 \, \chi_\ell \, \Pi_2| = \mathcal{O}(\varepsilon^2) \quad \text{as} \ \ \varepsilon \to 0.
\end{multline*}

\smallskip

{\bf 4.} Here,  we prove  estimate \eqref{app_estimate}. We multiply the differential equation~\eqref{dif_1}  with  $\mathfrak{R}_\varepsilon$ and integrate it over $\Omega_\varepsilon^{T_1}.$  Integrating by parts and taking \eqref{bc_1}, \eqref{bc_2}, \eqref{Res_10}, \eqref{difference+1}, \eqref{est+1},    \eqref{phi_cond+1} and \eqref{max_1} into account, we deduce the inequality
\begin{equation}\label{est11}
  \varepsilon \, {\| \nabla_x \mathfrak{R}_\varepsilon}\|^2_{L^2(\Omega_\varepsilon^{T_1})} \le C_1 \varepsilon^5 + \varepsilon^2 \int_{\Omega_\varepsilon^{T_1}} |\Lambda(u_\varepsilon,x_1,t)| \, |\partial_{x_1}\mathfrak{R}_\varepsilon| \, dx.
 \end{equation}
It follows from \eqref{v_1} that the value $|\Lambda(u_\varepsilon,x_1,t)|$ is bounded by a constant $C_2.$ Then, using Cauchy's inequality with $\delta = \frac{1}{2 \varepsilon}, $ we deduce  from \eqref{est11}  the inequality
\begin{equation}\label{app_estimate+1}
\tfrac{1}{\sqrt{\upharpoonleft  \Omega_\varepsilon \upharpoonright_3}}\,{\| \nabla_x \mathfrak{R}_\varepsilon\|}_{L^2(\Omega_\varepsilon\times (0, T_1))} \le
 C_3\, \varepsilon.
\end{equation}
  Since
  $$
  \tfrac{1}{\sqrt{\upharpoonleft  \Omega_\varepsilon \upharpoonright_3}}\,{\| \nabla_x(\varepsilon^2 u_2)\|}_{L^2(\Omega_\varepsilon\times (0, T_1))} = \mathcal{O}(\varepsilon)
  \quad
 \text{and}
  \quad
  \tfrac{1}{\sqrt{\upharpoonleft  \Omega_\varepsilon \upharpoonright_3}}\,{\| \nabla_x(\varepsilon^2 \, \chi_\ell \, \Pi_2)\|}_{L^2(\Omega_\varepsilon\times (0, T_1))} = \mathcal{O}(\varepsilon)
  $$
  as $\varepsilon$ goes to zero, from \eqref{app_estimate+1} it follows  \eqref{app_estimate}.
\end{proof}

Estimate \eqref{max_1+} leads to the following inequalities.
\begin{corollary}\label{corol_2} For $\varepsilon$ small enough
  \begin{equation}\label{max_1+1}
  \max_{\overline{\Omega}_\varepsilon \times [0,T_1]} \left|u_\varepsilon(x,t) - \mathfrak{A}_{0,\varepsilon} \right|  \le C_0 \, \varepsilon,
\end{equation}
\begin{equation}\label{app_estimate+2}
 \max_{(x_1,t)\in [0, \ell]\times [0,T_1]} \left| \tfrac{1}{ \upharpoonleft \varepsilon \varpi \upharpoonright_2}\int_{\varepsilon \varpi}  u_\varepsilon(x,t)\, d\bar{x}_1 -  \mathfrak{A}_{0,\varepsilon} \right|  \le C_1 \, \varepsilon.
\end{equation}
where $\mathfrak{A}_{0,\varepsilon} :=  w_0(x_1,t) + \chi_\ell(x_1) \, \big(q_{\ell}(t) - w_{0}(\ell,t)\big) \,
  \exp\big(- \mathrm{v}_1(t) \, \tfrac{\ell -x_1}{\varepsilon}\big).$
\end{corollary}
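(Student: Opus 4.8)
The plan is to deduce both estimates directly from the already-established bound \eqref{max_1+} by comparing the crude profile $\mathfrak{A}_{0,\varepsilon}$ with the first-order approximation $\mathfrak{A}_{1,\varepsilon}$. First I would write $\mathfrak{A}_{1,\varepsilon}$ out explicitly. Discarding the $\varepsilon^2$-terms $\varepsilon^2 u_2$ and $\varepsilon^2\Pi_2$ from $\mathfrak{A}_\varepsilon=\mathcal{U}_\varepsilon+\chi_\ell\,\mathcal{B}_\varepsilon$ leaves
\[
\mathfrak{A}_{1,\varepsilon}(x,t) = w_0(x_1,t) + \varepsilon\big(w_1(x_1,t) + u_1(x_1,\tfrac{\overline{x}_1}{\varepsilon},t)\big) + \chi_\ell(x_1)\Big[\Pi_0(\tfrac{\ell-x_1}{\varepsilon},t) + \varepsilon\,\Pi_1(\tfrac{\ell-x_1}{\varepsilon},\tfrac{\overline{x}_1}{\varepsilon},t)\Big].
\]
Inserting the explicit formula \eqref{Pi_0}, namely $\Pi_0(\tfrac{\ell-x_1}{\varepsilon},t)=\big(q_\ell(t)-w_0(\ell,t)\big)\exp\big(-\mathrm{v}_1(t)\tfrac{\ell-x_1}{\varepsilon}\big)$, one recognizes that the $\varepsilon^0$-part of $\mathfrak{A}_{1,\varepsilon}$ is precisely $\mathfrak{A}_{0,\varepsilon}$. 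Consequently
\[
\mathfrak{A}_{1,\varepsilon} - \mathfrak{A}_{0,\varepsilon} = \varepsilon\big(w_1 + u_1\big) + \varepsilon\,\chi_\ell\,\Pi_1 .
\]

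Next I would bound each summand on the right uniformly in $\varepsilon$. The coefficient $w_1$ is the classical solution of the linear problem \eqref{lim_1} on the compact rectangle $[0,\ell]\times[0,T_1]$, hence bounded; the corrector $u_1$ solves the Neumann problem \eqref{u_1} with smoothly varying parameters on the fixed cross-section $\Upsilon_1$, and is therefore bounded uniformly in $(x_1,t)$ and in $\overline{x}_1/\varepsilon\in\overline{\varpi}$; finally $\Pi_1$ was shown in the preceding proposition to decay exponentially with coefficients bounded uniformly in $t$, so that $\sup|\chi_\ell\,\Pi_1|<\infty$. Thus $\max_{\overline{\Omega}_\varepsilon\times[0,T_1]}|\mathfrak{A}_{1,\varepsilon}-\mathfrak{A}_{0,\varepsilon}|\le C\varepsilon$, and the triangle inequality combined with \eqref{max_1+} gives
\[
\max_{\overline{\Omega}_\varepsilon\times[0,T_1]}|u_\varepsilon-\mathfrak{A}_{0,\varepsilon}| \le \tilde{C}_0\,\varepsilon^2 + C\,\varepsilon \le C_0\,\varepsilon ,
\]
which is estimate \eqref{max_1+1}.

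For the averaged estimate \eqref{app_estimate+2}, the key observation is that $\mathfrak{A}_{0,\varepsilon}$ depends only on $(x_1,t)$ and not on $\overline{x}_1$, precisely because $\Pi_0$ is independent of the transverse variable. Averaging $\mathfrak{A}_{0,\varepsilon}$ over the cross-section $\varepsilon\varpi$ therefore reproduces it, whence
\[
\frac{1}{\upharpoonleft \varepsilon \varpi \upharpoonright_2}\int_{\varepsilon\varpi} u_\varepsilon\,d\overline{x}_1 \;-\; \mathfrak{A}_{0,\varepsilon} = \frac{1}{\upharpoonleft \varepsilon \varpi \upharpoonright_2}\int_{\varepsilon\varpi}\big(u_\varepsilon-\mathfrak{A}_{0,\varepsilon}\big)\,d\overline{x}_1 .
\]
Estimating the integrand by the pointwise bound \eqref{max_1+1} immediately yields \eqref{app_estimate+2} with $C_1=C_0$.

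There is no genuine obstacle in this argument: the corollary is a direct consequence of Theorem~\ref{Th_1}. The only point requiring care is the uniform (in $\varepsilon$ and $t$) boundedness of the first-order correctors $w_1$, $u_1$ and $\Pi_1$, which is however guaranteed by their construction in Section~\ref{Sec:4 Formal analysis}. In essence the statement records that the more explicit, lower-order profile $\mathfrak{A}_{0,\varepsilon}$ still captures both the full solution and its cross-sectional average up to an error of order $\varepsilon$, the loss of one power of $\varepsilon$ relative to \eqref{max_1+} being exactly the size of the discarded first-order terms.
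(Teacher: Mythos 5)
Your argument is correct and is exactly the route the paper intends: the text introduces the corollary with the single remark that estimate \eqref{max_1+} leads to these inequalities, and your proof supplies precisely the missing details — identifying $\mathfrak{A}_{0,\varepsilon}$ as the $\varepsilon^0$-part of $\mathfrak{A}_{1,\varepsilon}$ via the explicit formula \eqref{Pi_0}, bounding the discarded first-order correctors $w_1$, $u_1$, $\chi_\ell\Pi_1$ uniformly, and using that $\mathfrak{A}_{0,\varepsilon}$ is independent of $\overline{x}_1$ to pass to the cross-sectional average. No gaps.
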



\section{Problems with higher order P\'eclet numbers,  Conclusions and Discussions}\label{Sect-Conclusion}

{\bf 1.} Estimate \eqref{max_1+1} indicates that the solution $u_\varepsilon$ exhibits a boundary-layer structure in a vicinity of the right base of the thin cylinder $\Omega_\varepsilon.$
Estimate~\eqref{max_1+} shows that the solution is not constant with respect to the transversal variables in any  cross-section of the cylinder; the term $u_1$ (see \eqref{regul}) depends on the variables $\frac{\overline{x}_1}{\varepsilon}.$
Therefore, for both the original problem \eqref{probl} and many singularly perturbed problems, it is insufficient to demonstrate the convergence of the solution. It is imperative to construct the asymptotics of the subsequent terms, and
proving an error estimate between the constructed approximation and the solution should be a general principle for analysing the effectiveness of a proposed reduction method. This helps to ensure accuracy and reliability.

\smallskip

{\bf 2.} An interesting but unresolved case  is the scaling of the diffusion operator $\Delta_x$ with  an intensity factor of type  $\varepsilon^\beta.$ For many real processes, the diffusion coefficient is  very small. Therefore, it is reasonable to assume $\beta > 1.$ Currently, we can provide an answer for a situation where the diffusion operator has the following structure: $\varepsilon^\beta \,\partial^2_{x_1 x_1} + \varepsilon \, \Delta_{\overline{x}_1}$ or
\begin{equation*}
  \mathbb{D}_\varepsilon(x)  =
\left(
\begin{matrix}
 \varepsilon^\beta \, a_{11}(x_1) & 0 & 0 \\[2mm]
  0 &\varepsilon\, a_{22}(\frac{\overline{x}_1}{\varepsilon}) &  \varepsilon \, a_{23}(\frac{\overline{x}_1}{\varepsilon}) \\[2mm]
  0 &  \varepsilon\, a_{32}(\frac{\overline{x}_1}{\varepsilon}) &  \varepsilon\, a_{33}(\frac{\overline{x}_1}{\varepsilon})
\end{matrix}
\right),
\end{equation*}
and the convective field satisfying
\begin{equation}\label{str_1+}
\overrightarrow{V_\varepsilon}=
\left( \varepsilon^{\frac{\beta -1}{2}} v_1(s, x_1,t), \ \varepsilon  v_2(x_1, \tfrac{\overline{x}_1}{\varepsilon},t), \
 \varepsilon  v_3(x_1,\tfrac{\overline{x}_1}{\varepsilon},t)
\right),
\end{equation}
where $x \in \ \Omega_\varepsilon, \ s \in \Bbb R, \ t\in [0,T].$
Then, the  P\'eclet number is of  order $\mathcal{O}(\varepsilon^{-\frac{\beta +1}{2}})$ in  the longitudinal direction of the cylinder.

 In this case, the regular ansatz of the asymptotics for the solution $u_\varepsilon$ is as follows
\begin{align}\label{regul+}
  \mathcal{U}_\varepsilon(x,t) := & \ w_0(x_1,t) + \varepsilon^{\frac{\beta -1}{2}}  w_{\frac{\beta -1}{2}}(x_1,t)
  + \varepsilon\Big(w_1(x_1,t)  + u_1\big(x_1, \tfrac{\overline{x}_1}{\varepsilon}, t \big)  \Big) \notag
  \\
  & +   \varepsilon^{\frac{\beta +1}{2}} \Big(w_{\frac{\beta +1}{2}}(x_1,t)  + u_{\frac{\beta +1}{2}}\big(x_1, \tfrac{\overline{x}_1}{\varepsilon}, t \big)  \Big) +  \varepsilon^2 u_2\big(x_1, \tfrac{\overline{x}_1}{\varepsilon}, t \big)  + \varepsilon^{\frac{\beta +3}{2}} u_{\frac{\beta +3}{2}}\big(x_1, \tfrac{\overline{x}_1}{\varepsilon}, t \big).
\end{align}

Applying the same procedures as outlined in Sect.~\ref{Sec:4 Formal analysis}, it is evident that the coefficient $u_1$ solves
the Neumann problem \eqref{u_1}, but now $w_0$ is a solution to the Cauchy problem
\begin{equation}\label{limit_prob+Cauchy}
 \partial_t{w}_0   =  - \widehat{\varphi}({w}_0, x_1, t)\quad \text{in} \ \ (0, T],
    \qquad     w_0|_{t=0} =  0,
\end{equation}
where the variable $x_1$ is regarded as a parameter. According to condition ${\Cone},$ this problem has a unique solution and
$w_0 =0$ for $x_1 \in [0, \delta_1].$ By condition ${\Cthree},$ namely $\varphi|_{t=0} = 0,$ we have $\partial_t{w}_0|_{t=0} =0.$
The second  term in  \eqref{regul+}  depends on the parameter $\beta$:
this is $\varepsilon^{\frac{\beta -1}{2}}  w_{\frac{\beta -1}{2}}$ if $\beta \in (1, 3),$ and this is $\varepsilon w_1$  if $\beta \ge 3.$

Let us restrict ourselves to the  case $\beta =3$ in this paragraph to avoid  writing new problems for the coefficients.
In this case, ansatz \eqref{regul+} coincides with \eqref{regul}. The coefficient $u_2$  solves the Neumann problem \eqref{u_2} but  the summand $\partial_{x_1}\!\big(\big(v_1({w}_0,x_1,t) +   \partial_s v_1({w}_0,x_1,t)\, {w}_0 \big) {u}_1\big)$ in the differential equation will be absent. The coefficient $w_1$ is a unique solution to the Cauchy problem
\begin{equation*}
 \partial_t{w}_1   =  - \partial_s \widehat{\varphi}(w_0, x_1, t)  \, w_{1}-  \partial_{x_1}\!\big(v_1(w_0, x_1,t)\, {w}_0 \big) + f_1(x_1,t) \quad \text{in} \ \ (0, T],
 \qquad   {w_1}|_{t=0} =  0,
 \end{equation*}
where $f_1$ is defined by  \eqref{fun_1} but without the term $\partial^2_{x_1 x_1}{w}_0.$ Again due to  ${\Cone},$
$w_1 =0$ for $x_1 \in [0, \delta_1].$
In order to show that  $u_2|_{t=0} = 0$, we need to establish only that $\partial_t  u_1|_{t=0} = 0.$  It follows from \eqref{u_1} that this equality is  fulfilled if \eqref{new-cond} holds. Hence, $\mathcal{U}_\varepsilon|_{t=0} = 0.$

The boundary-layer ansatz for $\beta >1$ is constructed by the same asymptotic scale as in the ansatz \eqref{regul+}, but the main novelty is a new variable
$$
\zeta_1 = \frac{\ell -x_1}{\varepsilon^{\frac{1+\beta}{2}}}.
$$
In particular,  in the case $\beta =3$ the boundary-layer ansatz is as follows
\begin{equation*}
\mathcal{B}_\varepsilon(x,t)
 := \Pi_0\Big(\frac{\ell - x_1}{\varepsilon^2}, \frac{\bar{x}_1}{\varepsilon}, t\Big) + \varepsilon \, \Pi_1\Big(\frac{\ell - x_1}{\varepsilon^2}, \frac{\bar{x}_1}{\varepsilon}, t\Big)
    +
    \varepsilon^{2} \, \Pi_{2}\Big(\frac{\ell - x_1}{\varepsilon^2}, \frac{\bar{x}_1}{\varepsilon}, t\Big),
 \end{equation*}
where $\Pi_0,$ $\Pi_1$ and $\Pi_2$ are solutions to problems \eqref{prim+probl+0} and \eqref{prim+probl+1}, respectively.

Next we construct the  approximation $\mathfrak{A}_\varepsilon =  \mathcal{U}_\varepsilon + \chi_\ell(x_1) \, \mathcal{B}_\varepsilon$ in $\Omega_\varepsilon^{T},$
and  show with additional assumptions  \eqref{new-cond} and \eqref{therd_cond}  that $\mathfrak{A}_\varepsilon$ leaves residuals of order $\mathcal{O}(\varepsilon^\beta)$ if $\beta \in (1,2)$ and of order $\mathcal{O}(\varepsilon^2)$ if $\beta \ge 2.$ Applying the same reasoning as in Theorem~\ref{Th_1}, we can derive the corresponding asymptotic estimates.  Here we formulate the results for $\beta \ge 3.$ In this case, the two-term asymptotic approximation reads
$$ 
\mathfrak{A}_{1,\varepsilon}(x,t)  :=  w_0(x_1,t) +  \varepsilon\Big(w_1(x_1,t)  + u_1\big(x_1, \tfrac{\overline{x}_1}{\varepsilon}, t \big)  \Big)
 + \chi_\ell(x_1) \bigg(\Pi_0\Big(\frac{\ell - x_1}{\varepsilon^{\frac{1+\beta}{2}}}, \frac{\bar{x}_1}{\varepsilon}, t\Big) + \varepsilon \, \Pi_1\Big(\frac{\ell - x_1}{\varepsilon^{\frac{1+\beta}{2}}}, \frac{\bar{x}_1}{\varepsilon}, t\Big) \bigg)
$$ 
for $(x,t) \in \overline{\Omega}_\varepsilon \times [0,T],$
where  the coefficients are defined above in this section and the function $\chi_\ell$ in \eqref{cut-off_functions}.
\begin{theorem}
  Assume that, in addition to the main assumptions made in Section~\ref{Sec:Statement},
assumptions \eqref{new-cond} and \eqref{therd_cond} are satisfied. Then the asymptotic estimates \eqref{max_1+} and \eqref{app_estimate} hold for $T_1 = T.$ In addition, estimates \eqref{max_1+1} and \eqref{app_estimate+2} are also satisfied with $T_1 = T;$ in these estimates\\
\centerline{
$\mathfrak{A}_{0,\varepsilon} =  w_0(x_1,t) + \chi_\ell(x_1) \, \big(q_{\ell}(t) - w_{0}(\ell,t)\big) \,
  \exp\big(- \mathrm{v}_1(t) \, (\ell -x_1)/\varepsilon^{\frac{1+\beta}{2}}\big).$}
\end{theorem}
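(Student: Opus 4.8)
The plan is to rerun the three stages of the proof of Theorem~\ref{Th_1} for the approximation $\mathfrak{A}_\varepsilon=\mathcal{U}_\varepsilon+\chi_\ell\,\mathcal{B}_\varepsilon$ assembled from the $\beta$-ansatz: a residual lemma in the spirit of Lemma~\ref{lemma-1}, then the maximum-principle bound, then the energy bound. The structural gain here is that $w_0$ solves the parameter-dependent Cauchy problem \eqref{limit_prob+Cauchy} for an ordinary differential equation, so by \textbf{A3} it is smooth and global in time, vanishes on $[0,\delta_1]$ and satisfies $\partial_t w_0|_{t=0}=0$; no characteristics cross and no shock forms. Hence $w_0,w_1,u_1$ are smooth on the whole slab $[0,\ell]\times[0,T]$ and the layer terms $\Pi_0,\Pi_1$ decay exponentially in $\zeta_1=(\ell-x_1)/\varepsilon^{\frac{1+\beta}{2}}$ uniformly for $t\in[0,T]$. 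This is precisely what upgrades every estimate from $T_1$ to $T_1=T$.

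First I would insert $\mathfrak{A}_\varepsilon$ into the modified problem, whose diffusion is $\varepsilon^\beta\partial^2_{x_1x_1}+\varepsilon\Delta_{\overline{x}_1}$ and whose longitudinal velocity carries the factor $\varepsilon^{\frac{\beta-1}{2}}$. Under the rescaling $\zeta_1=(\ell-x_1)/\varepsilon^{\frac{1+\beta}{2}}$ both $\varepsilon^\beta\partial^2_{x_1x_1}$ and $\varepsilon^{\frac{\beta-1}{2}}\mathrm{v}_1(t)\partial_{x_1}$ are of order $\varepsilon^{-1}$ inside the layer, so the leading layer operator is again $\Delta_\zeta+\mathrm{v}_1(t)\partial_{\zeta_1}$ and problems \eqref{prim+probl+0}, \eqref{prim+probl+1} are reused verbatim. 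Since the construction pushes $\varepsilon^\beta\partial^2_{x_1x_1}w_0=\mathcal{O}(\varepsilon^\beta)$ into the remainder (this is why $\partial^2_{x_1x_1}w_0$ is absent from $f_1$ once $\beta\ge2$), the residuals in the equation and in the lateral condition are $\mathcal{O}(\varepsilon^2)$ for $\beta\ge2$; that is, \eqref{dif_1}--\eqref{Res_10} hold with $T_1=T$.

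The pointwise bound then copies Parts~1--3 of Theorem~\ref{Th_1}. The substitutions $\Psi_\varepsilon=\mathfrak{R}_\varepsilon e^{-\lambda t}$ and $\mathcal{K}_\varepsilon=\Psi_\varepsilon Z_\varepsilon$, with the same transversal barrier $Z_\varepsilon$ and the same $\varrho$ from \eqref{m_1}, are unaffected because $Z_\varepsilon$ and the Robin condition involve only the transversal variables, still scaled by $\varepsilon$. The one thing to check is that the maximum principle tolerates the anisotropy: at an interior extremum of $\mathcal{K}_\varepsilon$ the Hessian is semidefinite, so $-\varepsilon^\beta\partial^2_{x_1x_1}\mathcal{K}_\varepsilon$ and $-\varepsilon\Delta_{\overline{x}_1}\mathcal{K}_\varepsilon$ keep the correct sign independently of the relative sizes of $\varepsilon^\beta$ and $\varepsilon$. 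With $\lambda$ as in \eqref{e_7} the zeroth-order coefficient stays $\ge1$, giving $\max|\mathcal{K}_\varepsilon|\le C\varepsilon^2$ and thus \eqref{max_1+}, together with the analogues \eqref{max_1+1} and \eqref{app_estimate+2} of Corollary~\ref{corol_2} for the stated $\mathfrak{A}_{0,\varepsilon}$, whose layer now lives on the scale $\varepsilon^{\frac{1+\beta}{2}}$.

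The delicate point, which I expect to be the main obstacle, is \eqref{app_estimate}. Testing \eqref{dif_1} with $\mathfrak{R}_\varepsilon$ and integrating by parts produces the anisotropic coercive form $\varepsilon^\beta\|\partial_{x_1}\mathfrak{R}_\varepsilon\|^2_{L^2(\Omega_\varepsilon^{T})}+\varepsilon\|\nabla_{\overline{x}_1}\mathfrak{R}_\varepsilon\|^2_{L^2(\Omega_\varepsilon^{T})}$, while the convective term $\varepsilon^{\frac{\beta-1}{2}}\int\Lambda(u_\varepsilon,x_1,t)\,\partial_{x_1}\mathfrak{R}_\varepsilon\,\mathfrak{R}_\varepsilon$, bounded via $|\mathfrak{R}_\varepsilon|\le C\varepsilon^2$ and a Young inequality tuned to the weight $\varepsilon^\beta$, contributes a right-hand side of order $\varepsilon^5$. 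The transversal part is then immediate: $\varepsilon\|\nabla_{\overline{x}_1}\mathfrak{R}_\varepsilon\|^2\le C\varepsilon^5$ yields $\tfrac{1}{\sqrt{\upharpoonleft \Omega_\varepsilon \upharpoonright_3}}\|\nabla_{\overline{x}_1}\mathfrak{R}_\varepsilon\|\le C\varepsilon$ exactly as in \eqref{app_estimate+1}. The longitudinal derivative, however, is controlled only through the weak weight $\varepsilon^\beta$, so after dividing by $\sqrt{\upharpoonleft \Omega_\varepsilon \upharpoonright_3}\sim\varepsilon$ the bound deteriorates as $\beta$ grows; securing \eqref{app_estimate} uniformly therefore hinges on the competition between the weakened convection $\varepsilon^{\frac{\beta-1}{2}}$ and the weakened longitudinal diffusion $\varepsilon^\beta$, and on exploiting that $\Lambda\equiv\mathrm{v}_1(t)$ is constant on $\Omega_{\varepsilon,\ell-\delta_1}$, so that the only genuinely large longitudinal gradients---those of the boundary layer---enter through a coefficient that never differentiates $u_\varepsilon$. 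Finally, passing from $\mathfrak{R}_\varepsilon=\mathfrak{A}_\varepsilon-u_\varepsilon$ to $\mathfrak{A}_{1,\varepsilon}-u_\varepsilon$ costs only the gradients of $\varepsilon^2u_2$ and $\varepsilon^2\chi_\ell\Pi_2$, which are $\mathcal{O}(\varepsilon)$ after normalization and hence harmless.
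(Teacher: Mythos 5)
Your overall route is the same as the paper's: the paper proves this theorem only by constructing the $\beta$-ansatz, observing that the limit problem degenerates to the parameter-dependent ODE \eqref{limit_prob+Cauchy} (whence global-in-time smoothness and $T_1=T$), rescaling the boundary layer to $\zeta_1=(\ell-x_1)/\varepsilon^{\frac{1+\beta}{2}}$, and then ``applying the same reasoning as in Theorem~\ref{Th_1}.'' Your verification that the layer operator is again $\Delta_\zeta+\mathrm{v}_1(t)\partial_{\zeta_1}$, that the residuals are $\mathcal{O}(\varepsilon^2)$ for $\beta\ge 2$, and that the maximum-principle argument of Parts 1--3 survives the anisotropy (negative semidefinite Hessian at an interior extremum controls $-\varepsilon^\beta\partial^2_{x_1x_1}\mathcal{K}_\varepsilon$ and $-\varepsilon\Delta_{\overline{x}_1}\mathcal{K}_\varepsilon$ separately, while the barrier $Z_\varepsilon$ and the Robin reduction only involve the transversal scaling) is correct and fully supports \eqref{max_1+}, \eqref{max_1+1} and \eqref{app_estimate+2} with $T_1=T$.

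The genuine gap is exactly where you stop: the energy estimate \eqref{app_estimate} for the \emph{full} gradient. Repeating Part 4 of Theorem~\ref{Th_1}, the coercive form is $\varepsilon^\beta\|\partial_{x_1}\mathfrak{R}_\varepsilon\|^2_{L^2(\Omega_\varepsilon^{T})}+\varepsilon\|\nabla_{\overline{x}_1}\mathfrak{R}_\varepsilon\|^2_{L^2(\Omega_\varepsilon^{T})}$, and the right-hand side (interior and lateral residuals of size $\varepsilon^2$ against $|\mathfrak{R}_\varepsilon|\le C\varepsilon^2$, plus the convective term with the Young inequality tuned to the weight $\varepsilon^\beta$) is still only $\mathcal{O}(\varepsilon^5)$ --- none of these contributions improves with $\beta$, since the dominant one comes from the lateral boundary residual and the sup-norm bound on $\mathfrak{R}_\varepsilon$, not from the convection. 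This yields $\|\nabla_{\overline{x}_1}\mathfrak{R}_\varepsilon\|_{L^2}\le C\varepsilon^2$ as before, but only $\|\partial_{x_1}\mathfrak{R}_\varepsilon\|_{L^2}\le C\varepsilon^{(5-\beta)/2}$, hence after dividing by $\sqrt{\upharpoonleft\!\!\Omega_\varepsilon\!\!\upharpoonright_3}\sim\varepsilon$ the longitudinal part of \eqref{app_estimate} comes out as $\mathcal{O}(\varepsilon^{(3-\beta)/2})$, which is only $\mathcal{O}(1)$ at $\beta=3$ and degenerates for $\beta>3$. Your suggestion to exploit that $\Lambda$ reduces to $\varepsilon^{\frac{\beta-1}{2}}\mathrm{v}_1(t)$ near $x_1=\ell$ does not help, because the loss is not caused by the convective term but by the weakness of the longitudinal coercivity itself; integrating the flux difference by parts instead of expanding it returns you to the same bound. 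So the step ``securing \eqref{app_estimate} uniformly'' that you flag as the main obstacle is not closed by the argument you outline: either an additional device controlling $\partial_{x_1}\mathfrak{R}_\varepsilon$ (e.g., an energy identity for the differentiated equation, or a reading of \eqref{app_estimate} in the anisotropic norm $\varepsilon^{\beta-1}\|\partial_{x_1}\cdot\|^2+\|\nabla_{\overline{x}_1}\cdot\|^2$) is required, and the paper's proof-by-reference does not supply one either.
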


It can be concluded from the above that this asymptotic approach is applicable to convection-dominated  transport problems
with nonlinear diffusion operators in the form of
$\varepsilon^\beta \,\partial_{x_1}\big(a_1(x,t,u_\varepsilon, \partial_{x_1}u_\varepsilon)\big)  + \varepsilon \, \Delta_{\overline{x}_1} u_\varepsilon,$ where the positive and smooth function $a_1$ is equal to $\partial_{x_1}u_\varepsilon$ for $x_1 \in [0, \delta_1]$
(of course, this function requires some additional assumptions (see \cite[Chapt. V]{Lad_Sol_Ura_1968})).

\smallskip

{\bf 3.} Problem \eqref{probl} can be considered in a thin graph-like network. The analysis requires special assumptions about the convective vector field $\overrightarrow{V_\varepsilon}$ in neighborhoods of the network nodes and the construction of the node-layer terms of the asymptotics. More detailed information can be found in our article \cite{Mel-Roh_JMAA-2024}. In the same  paper and in \cite{Mel-Roh_AsAn-2024},  the Robin-type boundary condition \eqref{intr.2} has been considered with a multiplicative  intensity factor $\varepsilon^\alpha \, (\alpha \in \Bbb R)$ in front of     $\varphi_\varepsilon.$
As follows from \cite{Mel-Roh_AsAn-2024,Mel-Roh_JMAA-2024}, the most interesting case occurs at $\alpha=1$ because then the chemistry is in equilibrium with the flow in the limit as $\varepsilon \to 0.$ Therefore, we considered this case for our present problem \eqref{probl}.  As a result, the  homogenized transformation $-\widehat{\varphi}$ appears on  the right side of the limit problem \eqref{limit_prob}.

 An  intensity factor $\varepsilon^\tau,$ where $\tau > 1,$ can also be presented in the transversal components of the convective vector field \eqref{str_1}. Then the summands with the function $\overline{V}$ will be absent in problem \eqref{u_1} and appear in a next problem, e.g. if $\tau =2,$ these summands appear in problem \eqref{u_2}.

\smallskip

{\bf 4.} The work's results show that for mathematical models it is very important to make nondimensionalization and identify small parameters in the resulting problem. Then, using asymptotic methods, it is possible to construct approximations to solutions, even for nonlinear problems, saving time and resources for numerical calculations.

Now we can assert  that for convection-diffusion problems in thin cylinders,  the limit concentration
\begin{itemize}
  \item is governed by Taylor dispersion if  the coefficients both near $\partial^2_{x_1 x_1}u_\varepsilon$ and  the longitudinal component of the convective flow are of order $\mathcal{O}(1);$
  \item is a solution to the mixed hyperbolic quasilinear  problem \eqref{limit_prob}  for the original  problem \eqref{probl};
  \item is a solution to Cauchy problem \eqref{limit_prob+Cauchy} for the diffusion operator $\varepsilon^\beta \,\partial^2_{x_1 x_1} + \varepsilon \, \Delta_{\overline{x}_1}$ and the convective field \eqref{str_1+}.
\end{itemize}


\subsection*{Acknowledgment}
The authors  thank  for  funding by the  Deutsche Forschungsgemeinschaft (DFG, German Research Foundation) – Project Number 327154368 -- SFB 1313.

\end{document}